\definecolor{red}{rgb}{1.0,0.0,0.0}
\definecolor{blu}{rgb}{0.0,0.0,1.0}
\definecolor{violet}{rgb}{0.5,0.0,0.4}
\def\eps{\varepsilon}
\newtheorem{Theorem}{Theorem}[section]
\newtheorem{Definition}[Theorem]{Definition}
\newtheorem{Proposition}[Theorem]{Proposition}
\newtheorem{Lemma}[Theorem]{Lemma}
\newtheorem{Corollary}[Theorem]{Corollary}
\newtheorem{Assumption}[Theorem]{Assumption}
\newtheorem{Remark}[Theorem]{Remark}
\newtheorem{Example}[Theorem]{Example}
\numberwithin{equation}{section}
\newcommand{\myref}[1]{(\ref {#1})}
\def\R{\mathbb R}
\def\N{\mathbb N}
\def\E{\mathbb E}
\def\cald{{\mathcal D}}
\def\cala{{\mathcal A}}
\def\calb{{\mathcal B}}
\def\cale{{\mathcal E}}
\def\calk{{\mathcal K}}
\def\call{{\mathcal L}}
\def\calp{{\mathcal P}}
\def\<{\left\langle }
\def\>{\right\rangle }
\def\Swiech
\def\SWIECH
\newtheoremstyle{mytheorem}% name
{6pt}%Space above
{6pt}%Space below
{\itshape}%Body font
{-0pt}%Indent amount 1
{\large \scshape}% Theorem head font
{}%Punctuation after theorem head
{1em}%Space after theorem head 2
{}%Theorem head spec (can be left empty, meaning "normal")
\newtheoremstyle{myremark}% name
{6pt}%Space above
{10pt}%Space below
{\rm}%Body font
{-0pt}%Indent amount 1
{\large \scshape}% Theorem head font
{}%Punctuation after theorem head
{1em}%Space after theorem head 2
{}%Theorem head spec (can be left empty, meaning "normal")
\def\@endtheorem{{\hfill\hbox{\enspace${ \blacksquare}$}}\endtrivlist\@endpefalse } % insert `\qed` macro
\def\eps{\varepsilon}
\def\calf{{\mathcal{F}}}
\def\calp{{\mathcal{P}}}
\def\calr{{\mathcal{R}}}
\def\dis{\displaystyle}
\def\eps{\varepsilon}
\def\P{\mathfrak{P}}
\def\N{\mathcal{N}}
\def\norm{{\| \kern -.05em | }}
\newcommand{\reals}{{{\rm I} \kern -.15em {\rm R} }}
\newcommand{\nat}{{{\rm I} \kern -.15em {\rm N} }}
\newcommand{\ud}{{d}}
\def\R{\mathbb R}
\def\N{\mathbb N}
\def\E{\mathbb E}
\def\P{\mathbb P}
\def\cala{{\mathcal A}}
\def\calb{{\mathcal B}}
\def\cald{{\mathcal D}}
\def\cale{{\mathcal E}}
\def\calf{{\mathcal F}}
\def\calk{{\mathcal K}}
\def\call{{\mathcal L}}
\def\calm{{\mathcal M}}
\def\calp{{\mathcal P}}
\def\calu{{\mathcal U}_p}
\def\cals{{\mathcal S}}
\def\calo{{\mathcal O}}
\def\to{\rightarrow}
\def\<{\left\langle }
\def\>{\right\rangle }
\begin{document}

%\title{Verification theorems for stationary stochastic optimal control problems in  Hilbert spaces by means of mild solutions of HJB equations}

\title{Verification theorems for stochastic optimal control problems in Hilbert spaces by means of a generalized Dynkin formula}

\author{Salvatore Federico\footnote{Universit\`a degli Studi di Siena, Dipartimento di Economia Politica e Statistica, Piazza San Francesco 7, 53100, Siena (Italy). Email: \texttt{salvatore.federico@unisi.it}.} \and Fausto Gozzi\footnote{LUISS University, Dipartimento di Economia e Finanza, Viale Romania 32, 00197, Rome (Italy). Email: \texttt{fgozzi@luiss.it}. }}

\maketitle

\begin{abstract}
Verification theorems are key results to successfully employ the dynamic programming approach to optimal control problems. In this paper we introduce a new method to prove verification theorems for infinite dimensional stochastic optimal control problems. The method applies in the case of additively controlled Ornstein-Uhlenbeck processes, when the associated Hamilton-Jacobi-Bellman (HJB) equation admits a \emph{mild solution} (in the sense of \cite{FedericoGozzi16}).
The main methodological novelty of our result relies on the fact that it is not needed to prove, as in previous literature (see e.g. \cite{Gozzi95}), that the mild solution is  a \emph{strong solution}, i.e. a suitable limit of classical solutions of the HJB equation. To achieve the goal we prove a new type of Dynkin formula, which is the key tool  for the proof of our main result.

\bigskip

\vskip 0.15cm

\textbf{Key words}: Stochastic optimal control, infinite dimensional HJB equations, Dynkin's formula, transition semigroups, verification theorems, optimal feedbacks.
\vskip 0.15cm

\textbf{AMS classification}: 93E20 (Optimal stochastic control); 70H20 (Hamilton-Jacobi equations); 65H15 (Stochastic partial differential equations);
49L20 (Dynamic programming method); 49N35 (Optimal feedback synthesis).
\vskip 0.30cm

\textbf{Acknowledgements.} The authors are sincerely grateful to Franco Flandoli, Ben Goldys and Mauro Rosestolato for fruitful discussions on {Subsection \ref{FEEDBACKNEUMANN} and Remark \ref{rem:peano}.}
{The authors are also grateful to an anonymous referee for careful scrutiny and useful suggestions that led to an improved version of the paper.}
\end{abstract}

\setcounter{tocdepth}{3}

\tableofcontents

\section{Introduction}
\label{sec:2015-05-01:00}

In this paper we introduce a new technique, based on a generalized Dynkin formula, to prove verification theorems for stochastic optimal control problems over infinite horizon in Hilbert spaces.

Verification theorems are  key results to enable to solve in a closed way optimal control problems through the dynamic programming approach. Once a solution (in some sense to be precised) of the associated HJB equation is known to exists, the verification theorem provides a sufficient (sometimes also necessary) condition of optimality, which can be used to find optimal controls in feedback forms through the so called closed loop equation.
In the stochastic case, when the solution $v$ is sufficiently smooth, the proof of such theorem is substantially based on an applying the Dynkin formula to the function $v$ and to the state process. In  our framework of discounted time-homogeneous infinite horizon problems the dependence on time is known, so the HJB equation is elliptic and $v$ only depends on the state variable. Hence,  in the finite dimensional case, to employ the classical Dynkin formula, it is needed to know that $v\in C^2$.
 Fortunately, in the finite dimensional case, due to the presence of a powerful regularity theory (at least for nondegenerate second order HJB equations) there is a wide class of problems for which actually $v$ is known to enjoy this regularity, hence the classical Dynkin formula applies and the verification theorem can be proved.
On the other hand, if $v$ is not known to be sufficiently smooth (i.e. when $v$ is known to be only a viscosity solution), still in the finite dimensional case, other techniques have been developed to overcome the fact that the classical Dynkin formula is not applicable.
We mention the following techniques.
\begin{itemize}
  \item[-] The technique developed in \cite{GozziSwiechZhou05}, dealing with  \emph{viscosity solutions}. In this case, the classical Dynkin formula is applied to test functions and only some weak results are obtained.
  \item[-] The technique developed in \cite{PardouxPeng92}. Here  a solution  $v\in C^1$ is obtained through the solution of a suitable backward SDE (BSDE). This technique applies to semilinear HJB equations and provides the verification theorem as a byproduct of the construction itself of the solution $v$. The latter feature is particularly meaningful, as it allows to completely bypass the problem of second order regularity of $v$ and the application of the classical Dynkin formula. On the other hand, the powerfulness of this approach is partly limited by the fact that it can be applied only when a \emph{structural condition} is verified by the control operator.
  \item[-] The technique developed in \cite{GozziRusso06a}: here $v$ is studied and treated as a \emph{strong solution}, i.e. as a suitable limit of classical solutions.
\end{itemize}

When the state space $H$ is infinite dimensional the situation is much worse. First of all, the regularity needed to apply the classical Dynkin formula (see, e.g.,  \cite[Sec.\,4.4]{DaPratoZabczyk14}) is very demanding and does not allow to deal with many applied examples proposed and only partly studied in the literature. This is partly due to additional regularity assumptions on the coefficients needed in infinite dimension, partly due to the lack of a satisfactory regularity theory in infinite dimension.
Hence, elaborating alternative methods is considerably more important than in the finite dimensional case.
Clearly, the first attempt consists  in trying to extend the techniques developed in the finite dimensional case to infinite dimensional one. On this side, so far the state of the art  can be  basically depicted as follows.
\begin{itemize}
  \item[(a)] There are no results concerning the case when $v$ is a viscosity solution.
  \item[(b)] Results with the BSDE approach have been elaborated in various papers, see e.g. \cite{FuTe-ell} in the infinite horizon case, but always under the structural condition. The latter requirement leaves out the treatments of important cases like boundary control of stochastic PDEs or delayed control of SDEs.
  \item[(c)] Results dealing with strong solutions  are available in \cite{GozziRouy96} and in \cite{Cerrai01-40}.
\end{itemize}
The results we provide here are closer, in the conclusions, to the results mentioned in item (c) above.  With respect to them, ours have a larger range of applicability and, not only in this sense, can be seen as a significant improvement of this technique, as  we will comment more precisely afterwards.

We stress the fact that our method to prove the verification theorem is a novelty also in finite dimension: our results may be useful to treat also finite dimensional problems where only partial regularity properties of the value function are known. Here we focus on the infinite dimensional case where the application is more meaningful.

We now illustrate the results and the novelties of our paper.
We consider a class of stochastic optimal control problems in a real separable Hilbert space $H$, where the noise is additive and the control only appears in an additive form in the drift term. More precisely, the state equation is
\begin{equation}\label{OUintro}
dX(t)=\big[AX(t)  + GL(u(t))\big]\,dt
+ \sigma \,dW(t),
\end{equation}
where  $A:{\cal{D}}(A)\subseteq H\to H$, $G:K\to H$, $L:\Lambda \to K$, $\sigma:\Xi\to H$ are suitable operators, with $K,\Xi$ being other real separable Hilbert spaces  and $\Lambda$ being a Polish space; $W$ is a $\Xi$-valued cylindrical Browian motion; $u$ is the control process taking values in $\Lambda$;   $X$ is the state process taking values in the Hilbert space $H$. The stochastic control problem consists in minimizing, over a set of admissible control processes, a cost functional in the form
\begin{equation*}
{\E}\left[ \int_{0}^{\infty}e^{-\lambda s}l\big(X(s),u(s)\big)\,ds\right],
\end{equation*}
where $\lambda>0$ is a discount factor and $l$ is a suitable real valued function.
In this case the associated HJB equation is an elliptic semilinear PDE in the space $H$:
$$
 \lambda v(x) -\frac{1}{2}\;
\mbox{\rm Tr}\;[\sigma\sigma^*D^2v(x)] -\< Ax,Dv(x)\>_H- F_0(x,D^Gv(x))=0,
$$
where 
$$
	F_0(x,D^Gv(x)) =
	 \inf_{u \in \Lambda }
\left\{  \< L(u),D^Gv(x) \>_K +l(x,u)\right\},
$$
where $D^Gv$ denotes  the $G$-gradient of a function $v:H\to\R$ (see Subsection \ref{SS4:GDER}). 
Under reasonable assumptions, it is proved in \cite{FedericoGozzi16} that such HJB equation admits a unique mild solution, i.e. a solution of a suitable integral form of the above equation. Such solution admits $G$-gradient, i.e. verifies the minimal differentiability requirement to give   sense to the nonlinear Hamiltonian term $F_0$ in HJB above.
Once one proves the existence of a mild solution $v$ to the associated HJB equation, the approach of item (c)  would require three nontrivial technical steps: first, proving that such a mild solution is indeed a strong solution (limit, in a suitable sense, of classical solution); second, applying Dynkin formula to the approximating classical solutions; third, passing to the limit the Dynkin formula.
As one may expect, passing through all these steps   requires additional hypotheses that may be nontrivial to check  in practice (see e.g. \cite{GozziRouy96}). %and \cite[Section 4.5]{FGSbook}).
Our goal here is to bypass these steps through an alternative path.
 In fact, we show that the role of strong solutions is not essential. Indeed, relying on the theory of $\pi$-semigroups (see e.g. \cite[Appendix B]{FGSbook} and \cite{Priola99}),
we prove a generalized (abstract) Dynkin formula --- deserving interest in itself ---  which can be directly applied to mild solutions. The proof is quite involved and this is the reason why we consider here the case of stochastic control of equation of type \eqref{OUintro}, where the uncontrolled part of the state equation is of Ornstein-Uhlenbeck type\footnote{It is worth to stress that, even if in the case of Ornstein-Uhlenbeck dynamics the approach of strong solutions has already been succesfully applied (see \cite{GozziRouy96}), the method used here, other than being  original, seems to be extendable to more general structures of state equations, where the strong solution approach would fail.}. Then, relying on this formula, we straightly prove a verification theorem.
 The new results on $G$-derivatives provided in \cite{FedericoGozzi16} (see also \cite[Ch.\,4]{FGSbook}) enable us to  apply our method to more general examples than the ones treated by the current literature; in particular, to cases where the structural condition required at item (b) above is not verified (see Section \ref{sec:app}).

The main results of the paper are the abstract Dynkin formula (Theorem \ref{teo:dynkin}); the  verification theorem (Theorem \ref{th:ver});
the consequent Corollary \ref{cr4:optimalfeedbackell} on sufficient conditions for the existence of optimal control processes in feedback form.
Moreover, since the existence of optimal feedback controls might be is easier to obtain
when the optimal control problem is considered in the weak formulation, i.e., letting also the stochastic basis to vary, we also provide Corollary \ref{cr4:optimalfeedbackellbis} in this direction.
%\blu{We underline that,} concerning the existence of \blu{feedback} control processes, we do not provide general results on existence of them,
We underline that we do not provide general results on
the existence of optimal control processes in feedback form,
as such results strongly depend on the specific case at hand.  To this regard, in Section \ref{sec:app} --- where we deal with two specific applications: optimal boundary control (of Neumann type) of the stochastic heat equation and optimal control of SDEs with delay in the control variable ---  we provide for the first example some results and comments on the existence of optimal feedback control processes.

The paper is organized as follows.
After some preliminaries in Section \ref{SS:pre} on spaces, notation and the notion of $G$-derivative recently extended in \cite{FedericoGozzi16}, we introduce our family of control problems in Section \ref{SS:SOC}.
Section \ref{SS:DYN} is devoted to prove our new Dynkin formula (Theorem \ref{teo:dynkin}), the methodological core of the paper. In Section \ref{SS:HJB} we prove our main results on the control problem: in Subsection \ref{SUB:VER}, the verification theorem (Theorem \ref{th:ver}); in Subsection \ref{SS:FEEDBACKS},
 Corollary \ref{cr4:optimalfeedbackell} on optimal feedbacks.
Section \ref{sec:app} is devoted to illustrate the applications of our results to the aforementioned examples. {Finally the Appendix is devoted to prove few technical results needed to prove our Dynkin formula.}

%\begin{equation}
%\label{eq4:HJbasicelliptic1}
%\lambda v(x) -\frac{1}{2}\;\mbox{\rm Tr}\;[Q(x)D^2v(x)]
%-\langle Ax+ b(x),Dv(x) \rangle  -F_0(x,v(x),D^{G}v(x))=0,
%\quad  x \in H.
%\end{equation}
\section{Preliminaries}\label{SS:pre}
In this section we provide some preliminaries about spaces and notation used in the rest of the paper and recall from \cite{FedericoGozzi16} the notion of $G$-derivative. We restrict the treatment of $G$-derivative to the case of real valued functions defined on Hilbert spaces and to constant operator maps $G$. This will be enough for the purposes of the present paper. For a more general theory and more details we refer to the aforementioned paper \cite{FedericoGozzi16}.
\subsection{Spaces and notation}
\paragraph*{Measurable bounded and continuous functions.} All the topological spaces are intended endowed with their Borel $\sigma$-algebra, denoted by $\calb$. By {measurable} set (function), we always intend a {\emph{Borel}} measurable set (function). If $U$ is a topological space and $V$ is a topological vector space, we denote by $B_b(U,V)$ the set of bounded measurable functions from $U$ to $V$ and by $C_b(U,V)$ the set of bounded continuous functions from $U$ to $V$. If $V=\R$, we drop it in the latter notation. If $V$ is complete,
the spaces $B_b(U,V)$ and $C_b({U},V)$   are  Banach spaces when endowed
with the norm
\begin{equation}\label{supnorm}
|\varphi |_{{\infty}} =  \sup_{x \in {U}}|\varphi (x)|_{V}.
\end{equation}

\paragraph*{Hilbert spaces.} Let $H$ be a Hilbert space. We denote its
norm by $|\cdot|_H$ and its inner product by
by $\left \langle \cdot,\cdot\right\rangle_H$.
We  omit the subscript  if the context is clear and if $H=\R$.
If a sequence $(x_n)_{n\in\mathbb{N}}\subseteq H$,   converges to $x\in U$ in the norm (strong) topology we write $x_n\to x$.

We denote by $H^*$
the topological dual of $H$, i.e. the space of all continuous
linear functionals defined on $H$.
We always identify  $H^*$ with $H$ through the standard Riesz identification.

%If $U$ is a Hilbert space and $\left\{ e_{k} \right\}_{k \in \mathbb{N}}$ is an orthonormal basis of $U$, we use, for $x \in U$, the notation $x_{k}:= \langle x,e_{k}\rangle$.
%

 \paragraph*{Linear operators.}
Let $H,K$ be {real separable} Hilbert spaces. We denote by
$\mathcal{L}(H,K)$ the set of all bounded (continuous) linear operators
$T :  H \to K$ with norm $|T|_{\mathcal{L}(H,K)}:= \sup_{x \in H, x \ne 0} \frac{|Tx|_K}{|x|_H}$, using for simplicity the notation $\mathcal{L}(H)$ when $H=K$.
Moreover, we denote by $\call_u (H,K)$ the space of closed densely defined and possibly unbounded linear operators $T:\mathcal{D}(T)\subseteq H \to K$, where $\cald(T)$ denotes the domain. We recall that $\cald(T)$ is a Hilbert space when endowed with the graph norm $|x|_{\cald(T)}=|x|_H+|Tx|_K$. The range of an operator $T\in\call_u(H,K)$ is denoted by $\calr(T)$.
 Clearly, $\call (H,K)\subseteq \call_u(H,K)$. Given  $T\in \call_u (H,K)$,  we  denote its adjoint operator by $T^*:\mathcal{D}(T^*)\subseteq K \to H$.

 We denote by   $\call_1(H)$ the set of trace class operators, i.e. the operators $T\in\call(H)$ such that, given an orthonormal basis  $\{e_k\}_{k\in\N}$  of $H$, the quantity
 $$|T|_{\call_1(H)}\coloneqq\sum_{k=1}^\infty \big\langle (T^*T)^{1/2} e_k,e_k\big\rangle_H$$
 is finite (see \cite[Sec.\,VI.6]{ReedSimon80}).  The latter quantity is independent of the basis chosen and defines a norm making $\call_1(H)$ a separable Banach space.
  The trace of an operator $T\in\call_1(H)$ is denoted by $\mbox{Tr}[T]$, i.e. $\mbox{Tr}[T]\coloneqq \sum_{k=0}^\infty \langle T e_k,e_k\rangle_U$. The latter quantity is finite and, again,  independent of the basis chosen.
 We denote  by  $\call_1^+(U)$ the subset of $\call_1(H)$ of self-adjoint nonnegative  (trace class) operators on $H$. Note that, if $T\in\call_1^+(H)$, then $\mbox{Tr}[T]=|T|_{\call_1(U)}$.

We denote by $\call_2(H,K)$ (subset of $\call(H,K)$) the space of Hilbert-Schmidt operators from $H$ to $K$, i.e the spaces of operators such that, given an orthonormal basis  $\{e_k\}_{k\in\N}$  of $H$, the quantity
 $$\big|T\big|_{\call_2(H)}\coloneqq\left(\sum_{k=0}^\infty \big|Te_k\big|_K^2\right)^{1/2}$$
 is finite (see \cite[Sec.\,VI.6]{ReedSimon80}).  The latter quantity is independent of the basis chosen and defines a norm making $\call_2(H)$ a Banach space.  It is actually a Hilbert space with the scalar product
$$
\big\langle T, S\big\rangle_{\call_2(H,K)}\coloneqq \sum_{k=0}^\infty \big\langle Te_k, Se_k\big\rangle_K,$$
where $\{e_k\}_{k\in\N}$ is any orthonormal basis of  $H$.

\paragraph*{Stochastic processes.} Let $(\Omega, \mathcal{F}, (\mathcal{F}_t)_{t\geq 0}, \P)$ be a filtered probability space satisfying the usual conditions.
Given $p\in[1,+\infty)$, $T>0$, and a Hilbert space $U$, we denote by $\mathcal{M}_\calp^{p,T}(U)$ the set of all (equivalence classes of) progressively measurable processes
$X\colon [0,T] \times \Omega\to U$ such that
\[
\big|X\big|_{\mathcal{M}_\calp^{p,T}(U)} := \left (\int_0^T\mathbb{E} \left[|X(s)|_U^p\right] ds\right )^{1/p} < \infty.
\]
This is a Banach space with the norm $|\cdot|_{\mathcal{M}_\calp^{p,T}(U)}$.
Next, we denote by $\mathcal{M}_\calp^{p,loc}(U)$ the space of all (equivalence classes of) progressively measurable processes
$X\in \mathcal{M}_\calp^{p,T}(U)$ such that
$X|_{[0,T]\times\Omega}\in \mathcal{M}_\calp^{p,T}(U)
$
for every $T>0$.
{We denote by $\mathcal{K}_\calp^{p,T}(U)$ the set of all (equivalence classes of) progressively measurable processes
$X\in \mathcal{M}_\calp^{p,T}(U)$ such that
$$
[0,T]\to L^p(\Omega,U), \ \ t\mapsto X(t)
$$
is continuous.
This is a Banach space with the norm
\[
\big|X\big|_{\mathcal{K}_\calp^{p,T}(U)} := \sup_{s\in[0,T]} \left(\mathbb{E} |X(s)|_U^p \right )^{1/p}.
\]
Next, we denote by $\mathcal{K}_\calp^{p,loc}(U)$ the space of all (equivalence classes of) progressively measurable processes
$X\colon [0,+\infty) \times \Omega\to U$ such that
$X|_{[0,T]\times\Omega}\in \mathcal{K}_\calp^{p,T}(U)
$
for every $T>0$.
We also say that
{elements of $\mathcal{K}_\calp^{p,T}(U)$ and $\mathcal{K}_\calp^{p,loc}(U)$
are ``$p$-mean continuous''.}}
%\red{Given $T>0$ and a Hilbert space $U$, we denote by
%$\mathcal{K}^{p,T}_{\mathcal{P}}(U)$ the space
% of all (equivalence classes of) progressively measurable processes
%$X\colon [0,T] \times \Omega\to U$ admitting a version with continuous trajectories and  such that
%\[
%|X(\cdot)|_{\mathcal{K}_\calp^{2,T}(U)} := \E \left[\sup_{s\in [0,T]} |X(s)|_U^2 \right ]^{1/2} < +\infty.
%\]}
%This is a Banach space with the norm $|\cdot|_{\mathcal{K}_\calp^{p,T}(U)}$.
%We denote by $\mathcal{K}_\calp^{p,loc}(U)$ the set of all (equivalences classes of) progressively measurable processes
%$X\colon [0,+\infty) \times \Omega\to U$ such that
%$X|_{[0,T]\times\Omega}\in \mathcal{K}_\calp^{p,T}(U)
%$
%for every $T>0$.
%
%\subsection{Bochner integration}
%Let $I\subseteq\R$, let $V$ be a Banach space, and let $f:I\to V$ be measurable.
%We recall that, if $|f|_V\in L^1(I,\R)$, then $f$ is Bochner integrable, and we write $f\in L^1(I,V)$. Moreover, in this case
%\begin{equation}\label{ppppk}
%\bigg\langle v^*,\  \int_I f(t)dt\bigg\rangle_{V^*,V} =\ \int_I\langle v^*,f(t)\rangle_{V^*,V}dt, \ \ \ \forall v^*\in V.
%\end{equation}
%Finally,  we recall that, if $V$ is separable, by Pettis measurablility Theorem \cite[Th.\,1.1]{Pettis38}, $f$ is measurable if and only if  $t\mapsto \langle v^*, f(t)\rangle_{V^*,V}$ is measurable for every $v^*\in V^*$.

\subsection{$G$-derivative}
\label{SS4:GDER}
Here we provide the notion of
$G$-derivative for functions $f:{H}\rightarrow \R$, where $H$ is a Hilbert space. The latter notion is considered in  \cite{FedericoGozzi16} when $G$ is a map $G:{U}\to \call_u(Z,U)$, with $U,Z$ Banach spaces. Here we restrict to the case of constant $G$.

Recall that, if $f:H\to \R$, the Fr\'echet derivative  of $f$ at $x$ (if it exists) is the (unique) linear functional $Df(x)\in  H^*\cong H$ such that
  $$\lim_{|h|_H\to 0} \frac{\big|f\left(  x+h\right)  -f\left(  x\right)- \langle Df(x),h \rangle_H\big|}{|h|_H}=0.
$$

\begin{Definition}[$G$-derivative]
\label{df4:Gderunbounded}
Let $H,K$  be Hilbert spaces, let $f:{H} \rightarrow \R$ and
 $G\in \call_u (K,H)$.
We say that $f$ is continuously $G$-Fr\'echet differentiable at $x\in H$ (briefly, $G$-differentiable at $x\in H$) if there exists $D^Gf(x)\in K^*\cong K$ (clearly, if it exists, then it is unique), called the $G$-derivative of $f$ at $x$, such that
\begin{equation}\lim_{k\in \cald(G), \,|k|_K\to 0} \frac{\big|f\left(  x+Gk\right)  -f\left(  x\right)- \langle D^Gf(x),k \rangle_K\big|}{|k|_K}=0.
\label{eq4:Gder}
\end{equation}
%\begin{equation}
%D^{G}f\left(  x;h\right)  =\lim_{s\rightarrow 0}\frac{f\left(  x+sG\left(
%x\right)  h\right)  -f\left(  x\right)  }{s},\text{ }s\in\mathbb{R},
%\label{eq4:Gder}
%\end{equation}
%where the previous limit is to be taken in the norm of $V$.
\end{Definition}
We denote by $C^{1,G}_b(H)$  the space of all maps $f:H\to \R$ such that $f$ is continuously $G$-differentiable over $H$, i.e. such that $f$ is $G$-differentiable at each $x\in H$ and the map $D^Gf:H\to K$  belongs to $C_b(H,K)$. In the special case $K=H$ and $G=I$, we simply use the standard notation   $C^1_b(H)$.
\begin{Remark}\label{rmGder}
Note that, in the definition of the $G$-derivative, one considers only the directions
in $H$ selected by the range of $G$.
When $K=H$ and $G=I$ it reduces to the Fr\'echet derivative, i.e. $Df=D^Gf$.
Clearly, if $f$ is $G$-differentiable at $x$, then it is also $G$-Gateaux differentiable at $x$, in the sense that
\begin{equation}\label{G-direc}
\lim_{t\to 0} \frac{f(x+tGk)-f(x)}{t}=\big\langle D^Gf(x),k\big\rangle_K,  \		\ \ \ \forall k\in\cald(G);
\end{equation}
moreover, the limit above is uniform in $k\in \cald(G)\cap B_K(0,R),$ for every $R>0$. Conversely, if there exists $k'\in K$ such that
\begin{equation}\label{G-direc2}
\lim_{t\to 0} \frac{f(x+tGk)-f(x)}{t}=\langle k',k\rangle_K, \ \ \mbox{uniformly in} \ k\in \cald(G)\cap B_K(0,R), \ \forall R>0,
\end{equation}
 then $f$ is $G$-differentiable at $x\in H$ and $D^Gf(x)=k'$.
\end{Remark}
 The notion of  $G$-derivative
allows  to deal with functions which are not Gateaux differentiable, as shown by the following example.
\begin{Example}\label{ex4:Gnondiff}

Let $f:\R^{2}\rightarrow \R$ be defined by $f(x_1,x_2)\coloneqq\left\vert
x_{1}\right\vert x_{2}$. Clearly, $f$ does not admit   directional derivative in the direction $(1,0)$ at the point $(x_1,x_2)=(0,1)$. On the other hand,
if we consider $G\in \call (\R^2)\cong  \R^2$, defined by  $G=(0,1)$, then
 $f$ admits
$G$-Fr\'echet derivative at every $(x_1,x_2) \in \R^2$.
\end{Example}
\begin{Remark}\label{rem:mmn}
Clearly, if $f$ is  Fr\'echet differentiable at some $x\in H$  and $G\in \call(K,H)$, it turns out that $f$ is $G$-Fr\'echet differentiable at $x$ and
\begin{equation}\label{eq4:DGDfG}
    D^{G}f\left(x\right)  =G^*D f\left(  x\right).
\end{equation}
Also, if $f$ is  both Fr\'echet differentiable and $G$-differentiable at some $x\in H$, then $Df(x)\in \cald(G^*)$ and
\eqref{eq4:DGDfG} holds true. Indeed,
%denoting by $G^{-1}:H\to \cald(G)$  the pseudo-inverse of $G$\footnote{}, taking $h\in H$ and setting $k=G^{-1} h$,
we get by Fr\'echet differentiability
  $$\lim_{s\to 0} \frac{f\left(  x+sGk\right)  -f\left(  x\right)}{s}= \big\langle Df(x),G k\big\rangle_H, \quad \forall k\in \cald(G).
$$
On the other hand, by $G$-Fr\'echet differentiability we also have
  $$\lim_{s\to 0} \frac{f\left(  x+sGk\right)  -f\left(  x\right)}{s}= \big\langle D^Gf(x),k\big\rangle_K,\quad \forall k\in \cald(G).
$$
Hence
$$
\big|\<Df\left(  x\right),Gk\>_H\big|= \big|\big\langle D^G f(x),k\big\rangle_K \big|\le \big|D^G f(x)\,\big|_K|k|_K,\quad \forall k\in \cald(G).
$$
It follows what claimed.
\end{Remark}
%\begin{Remark}\label{rm:GstarD}
% {\blu{The contents of Remark \ref{rem:mmn} can be made sharper.
%If $f$ is  Fr\'echet differentiable at some $x\in H$, $G\in \call_u(K,H)$
%is unbounded, and $Df(x)\in \cald(G)$, then  $f$ is $G$-G\^ateaux differentiable and, denoting still by $D^G f(x)$ its $G$-G\^ateaux derivative, we  have
%$D^{G}f\left(x\right)  =G^*D f\left(  x\right)$ (see Remark \ref{rmGder}). If, moreover, this happens for each $x\in H$ and the map $x\mapsto G^*Df(x)$ is continuous, then, recalling  \cite[Remark 4.19]{FGSbook}, one gets that  $f$ is continuously $G$-Fr\'echet differentiable.}}
%\end{Remark}

If $G$ is  unbounded, a function $f:H\to\R$ may be Fr\'echet-differentiable at some $x\in H$ and yet not $G$-Fr\'echet differentiable there, as shown by the following example.

\begin{Example}
%When $G$ only takes values in $\call_u(U,Z)$, that is when we deal with the possibility that $G(x)$ is unbounded, then
%even if $f$ is Fr\'echet differentiable at all points $x\in {U}$,
%the $G$-Fr\'echet derivative may not exist in some points.
%Indeed, consider the following example.
Let $H, K$ be  Hilbert spaces, let
$G:\cald(G)\subsetneq K \to H$ be a closed densely defined  unbounded linear operator on $H$, and let $G^*:\cald(G^*)\subsetneq H\to K$ be its adjoint. Next,
let $f:U \to \R$ be defined by $f(x)\coloneqq \frac{1}{2}|x|_H^2$. Clearly, $f$ is Fr\'echet differentiable  at every $x \in H$ and $Df(x)= x$.
On the other hand, if $f$ was also $G$-differentiable at every  $x\in H$, by Remark \ref{rem:mmn}
it would follow $x\in \cald(G^*)$ for every $x\in H$, i.e. $\cald(G^*)=H$, a contradiction.
\end{Example}

\color{black}

\section{Formulation of the stochastic optimal control problem}\label{SS:SOC}
We are concerned with the optimal control of an Ornstein-Uhlenbeck process valued in a Hilbert space $H$. Precisely, let $H,K,\Xi$ three real separable Hilbert spaces, {let $(U,|\cdot|_{U})$ be a real Banach space and let $\Lambda\subseteq U$ be measurable and endowed with the $\sigma$-algebra induced by $\calb(U)$, the Borel $\sigma$-algebra of $U$.}
Let $(\Omega,\mathcal{F},\{\mathcal{F}_t\}_{t\geq 0}, \P)$ be a {complete} filtered probability space {satisfying the usual conditions}, let $W=(W_t)_{t\geq 0}$ be a $\Xi$-valued cylindrical Brownian motion (see \cite[Ch.\,4]{DaPratoZabczyk14}),
and consider the controlled SDE
\begin{equation}
\begin{cases}
dX(t)=\big[AX(t) + GL(u(t))\big]\,dt
+ \sigma\, dW(t), \ \ \ t\geq 0,\\
\label{eq4:SE1ellbis}
X(0)=x,
\end{cases}
\end{equation}
where the control process $u(\cdot)$, taking values in $\Lambda$, belongs to a suitable space of admissible controls
and the coefficients $A,G,L, \sigma$ satisfy the following assumptions, which will be standing and not repeated throughout the paper.
\begin{Assumption}
\label{hp4:ABQforOU}
\begin{enumerate}[(i)]
\item[]
\item $A:\cald(A)\subseteq H\to H$ is a closed densely defined linear operator generating  a $C_0$-semigroup $\big\{e^{t A}\big\}_{t\geq 0}$ of  operators of $\mathcal{L}(H)$.

\item
 $\sigma\in \call(\Xi,H)$, \ $e^{sA}\sigma\sigma^{\ast}e^{sA^{\ast}}\in \call_1(H)$ for all $s >0$,
and
there exists $\gamma \in (0,1/2)$ such that
$$
\int_{0}^{t} s^{-2\gamma}{\rm Tr}\left[ e^{sA}\sigma\sigma^{\ast}e^{sA^{\ast}}\right]ds
<\infty \ \ \ \forall t\geq 0.
$$
\item $G:\cald(G)\subseteq K\to H$ is a closed densely defined\footnote{The assumption that $G$ is densely defined can be done without loss of generality, as one can always restrict $K$ to $\overline{\cald(G)}$.}  linear operator  such that
$e^{sA}G:\cald(G)\to H$ can be extended for every $s>0$ to a continuous linear operator defined on $K$ that we  denote by $\overline{e^{sA}G}$. Moreover, there exists %$p\in[1,+\infty]$,
$C_G>0$, $a_G \in \R$ and $\beta \in [0,1)$
%$f_G\in L^p_{loc}([0,+\infty),\R)$
such that
\begin{equation}\label{esg}
 \left|\overline{e^{sA}G}\right|_{\call(K,H)}\le \ C_G(s^{-\beta}\vee 1)\,e^{a_Gs} \ \ \forall s>0.
\end{equation}
\item  $L:\Lambda\to K$ is measurable and $\big|L(u)\,\big|_K\leq C_L(1+|u|_{U})$ for some $C_L>0$.

%\item\label{GL} $L\in B_b(U;K)$ and
%   there  exists  $f_{GL}\in L^1_{loc}([0,+\infty), \mathbb{R})$ 	and a modulus of continuity $w$ such that
%%\begin{equation}\label{primaipotesisug1}
%%|\overline{e^{sA}G}L(u)|\leq  f(s)
%%  (1+|x|), \ \ \ \forall s> 0, \ \forall x\in H,
%%  \end{equation}
%%  and
%  \begin{equation}
%  |\overline{e^{sA}G}(L(u)-L(u'))|\leq
%  f(s)w(d(u,u')), \ \ \ \ \forall s> 0, \ \forall u,u'\in U.
%  \label{primaipotesisug2}
%\end{equation}
\end{enumerate}
\end{Assumption}
\begin{Remark}\label{Rem:ext}
Since for every $t>0$ and $s\geq 0$ the operators
 $\overline{e^{(s+t)A}G}$ and $e^{sA}\overline{e^{tA}G}$ belong to $\call(K,H)$ and coincide on the dense subset $\cald(G)\subseteq K$, we have
\begin{equation}\label{eq:semi1}
\overline{e^{(s+t)A}G} = e^{sA}\overline{e^{tA}G}, \ \ \forall t>0,\ \forall s\geq 0.
\end{equation}
{This {implies} that the map $(0,+\infty)\to \mathcal{L}(K,H), \  s\mapsto \overline{e^{sA}G}$ is strongly continuous, i.e.
$s\mapsto \overline{e^{sA}G}x$ is continuous for each $x\in H$.}
\end{Remark}
%Let $p'$ the conjugate exponent of $p\in[0,+\infty]$ of Assumption \ref{hp4:ABQforOU}(iii).
We now take
\begin{equation}\label{eq:choicep}
p \in \left(\frac{1}{1-\beta}, +\infty\right),
\end{equation}
which will be fixed in the rest of the paper.
We consider, as space of admissible controls, the space of processes
\begin{equation}\label{up}
\calu:=\left\{u:\Omega\times[0,+\infty)\to \Lambda \ \mbox{prog. meas. and s.t.} \
\int_0^t \E\left[|u(s)|_U^{p}\right]\,ds <\infty \ \ \  \forall t\geq0 \right\}.
\end{equation}
The reason for the choice of $\beta$ in \eqref{esg} and of $p$ in \eqref{eq:choicep}-\eqref{up} relies on the following result ({cf. also \cite[Prop.\,8.8]{Folland99} and \cite[Lemma\,3.2]{GawareckiMandrekar10}}), which will guarantee well-posedness of  the controlled state equation (Proposition \ref{prop:SE}).
\begin{Lemma}\label{lemmaDP}
{Let $E,V$ be real  Banach spaces, let $\beta\in[0,1)$, $p>\frac{1}{1-\beta}$. Let $f\in L_{loc}^p([0,+\infty);E)$ and let $g:(0,+\infty)\to \mathcal{L}(E,V)$ be strongly continuous\footnote{Meaning that
$g(\cdot)e:(0,+\infty)\to V$ is continuous for each $e\in E$.}
and such that $|g(s)|_{\mathcal{L}(E,V)}\leq {C_0(s^{-\beta}\vee 1)}$ for some $C_0>0$ for every $s\in(0,+\infty)$. Then $F:\R^+\to V$ defined as Bochner integral} by
$$
F(t):= \int_0^t g(t-s)f(s)\,\,ds, \ \ t\in \R^+,
$$
is {well defined} and continuous.
\end{Lemma}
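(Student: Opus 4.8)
The plan is to fix the conjugate exponent $p'=\frac{p}{p-1}$ and to record at the outset the only arithmetic fact about the exponents that the whole argument rests on: the hypothesis $p>\frac{1}{1-\beta}$ is equivalent to $\beta<1-\frac1p=\frac{1}{p'}$, i.e.\ to $\beta p'<1$. This is exactly what makes the (possibly singular) kernel $s\mapsto (s^{-\beta}\vee 1)$ belong to $L^{p'}_{loc}([0,+\infty))$, and it will be used both for well-definedness and for continuity.

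For well-definedness, I would fix $t>0$ and study the integrand $s\mapsto g(t-s)f(s)$ on $(0,t)$. The first delicate point is measurability: since $g$ is only \emph{strongly} continuous, the $\mathcal{L}(E,V)$-valued map $s\mapsto g(t-s)$ need not be Bochner measurable, so one cannot simply invoke a product rule. Instead I would use that $f$, being strongly measurable, is the a.e.\ limit of simple functions $f_n=\sum_i x_i^n\mathbf{1}_{A_i^n}$, for which $g(t-s)f_n(s)=\sum_i \mathbf{1}_{A_i^n}(s)\,g(t-s)x_i^n$ is strongly measurable because each $s\mapsto g(t-s)x_i^n$ is continuous on $[0,t)$; passing to the a.e.\ limit yields strong measurability of $s\mapsto g(t-s)f(s)$. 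Bochner integrability then follows from the pointwise bound $|g(t-s)f(s)|_V\le C_0\big((t-s)^{-\beta}\vee 1\big)|f(s)|_E$ together with Hölder's inequality, since $\int_0^t\big((t-s)^{-\beta}\vee 1\big)^{p'}ds<\infty$ (this is where $\beta p'<1$ enters) and $f\in L^p([0,t];E)$.

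For continuity, I would pass to the convolution form $F(t)=\int_0^t g(r)f(t-r)\,dr$ and, for $t\ge t_0\ge 0$, write
$$F(t)-F(t_0)=\int_{t_0}^{t} g(r)f(t-r)\,dr+\int_0^{t_0} g(r)\big[f(t-r)-f(t_0-r)\big]\,dr=:I_1+I_2.$$
For $I_1$, Hölder gives $|I_1|_V\le C_0\big(\int_{t_0}^t (r^{-\beta}\vee 1)^{p'}\,dr\big)^{1/p'}\big(\int_0^{t-t_0}|f(u)|_E^p\,du\big)^{1/p}$; the second factor stays bounded while the first tends to $0$ as $t\downarrow t_0$ (the kernel being $L^{p'}$-integrable near the origin precisely because $\beta p'<1$), so $I_1\to 0$. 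For $I_2$, after the substitution $u=t_0-r$ the bracketed term becomes $f(u+h)-f(u)$ with $h=t-t_0$, and Hölder bounds $|I_2|_V$ by a fixed constant (the finite $L^{p'}$-norm of the kernel on $[0,t_0]$) times $\big(\int_0^{t_0}|f(u+h)-f(u)|_E^p\,du\big)^{1/p}$, which tends to $0$ by continuity of translations in $L^p([0,t_0];E)$ — a standard fact obtained via density of compactly supported continuous functions. The case $t<t_0$ is symmetric, and at $t_0=0$ left-continuity is trivial, so $F$ is continuous at every $t_0\ge 0$.

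The main obstacle is not the size estimates, which become routine once the inequality $\beta p'<1$ is isolated, but rather the measurability step: because $g$ is merely strongly continuous, one must establish strong measurability of the integrand by the Pettis/approximation argument above rather than by treating $g(t-\cdot)$ as a norm-measurable operator-valued function. A secondary point requiring care is the coupling, in the continuity proof, of the singular kernel with the $L^p$-continuity of translations; this is why it is essential to split off the shrinking interval $[t_0,t]$, on which the translation argument is unavailable, and estimate it separately via the integrability of the kernel.
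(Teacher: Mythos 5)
Your proof is correct, and the well-definedness part (measurability of $s\mapsto g(t-s)f(s)$ plus the H\"older bound driven by $\beta p'<1$) matches the paper's in substance, though you establish strong measurability by approximating $f$ with simple functions while the paper writes the integrand as a composition $h_1\circ h_2$ with $h_1(s,e)=g(s)e$ jointly continuous; both are valid ways around the fact that $g$ is only strongly continuous. Where you genuinely diverge is the continuity argument. The paper fixes $t_0>0$, truncates the singularity by setting $F_\varepsilon(t)=\int_0^{t-\varepsilon}g(t-s)f(s)\,ds$, shows each $F_\varepsilon$ is continuous on $[t_0,T]$ by dominated convergence, and then shows $F_\varepsilon\to F$ uniformly on $[t_0,T]$ via the same H\"older tail estimate; continuity at $0$ is handled separately by the bound $|F(t)|_V\lesssim t^{\kappa(p-1)/p}$. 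You instead pass to the convolution form $F(t)=\int_0^t g(r)f(t-r)\,dr$ and split $F(t)-F(t_0)$ into the ``new mass'' term $I_1$ (killed by absolute continuity of the $L^{p'}$ kernel integral over the shrinking interval) and the translation term $I_2$ (killed by continuity of translations in $L^p([0,t_0];E)$). The trade-off: the paper's route avoids invoking the Banach-valued translation-continuity lemma (itself a density argument) and yields the slightly stronger structural fact of uniform approximation by continuous functions, while your route estimates $F(t)-F(t_0)$ directly, treats $t_0=0$ and $t_0>0$ in one framework, and is the more standard convolution argument; both are of comparable length and rigor.
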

\begin{proof}
{Let $t>0$.} {First of all, we note that the map
$$
[0,t)\to V, \ \  s\mapsto g(t-s)f(s),
$$
is measurable for each $t>0$.} Indeed, given $t>0$ the above map can be seen as the composition $h_1\circ h_2$ where
$$
h_1:(0,t]\times E \to V,\quad h_1(s,e)=g(s)e;
\qquad h_2:[0,t) \to (0,t]\times E, \quad h_2(s)=(t-s,f(s)).
$$
Now, $h_2$ is clearly measurable. Also  $h_1$ is measurable, as it is continuous:  {indeed $g(\cdot)e$ is continuous for each $e\in E$ and $\{g(s)\}_{s\in[\varepsilon,t]}\subseteq \mathcal{L}(E,V)$ is a family of uniformly bounded operators for each $\varepsilon\in(0,t)$.} Hence $h_1\circ h_2$ is measurable.

Given the above, it makes sense to consider $ \int_0^t g(t-s)f(s)ds$ {in Bochner sense} for each $t>0$.
%Moreover
%$$
%\int_0^t |g(t-s)f(s)|_Vds\leq \int_0^t (t-s)^{-\beta} |f(s)|_Vds<+\infty
%$$
%for every $t\in\R^+$. This shows that $F$ is well defined as Bochner integral in $V$.
By H\"older's inequality, setting $\kappa:=-\frac{\beta p}{p-1}+1>0$, we have for each $t>0$
$$
\int_0^t |g(t-s)f(s)|_Vds\leq \int_0^t (t-s)^{-\beta} |f(s)|_Vds 	\leq \left(\int_0^t (t-s)^{-\beta\frac{p}{p-1}}ds\right)^{\frac{p-1}{p}}|f|_{L^{p}([0,T];\R)} =  \left(\frac{t^{\kappa}}{\kappa}\right)^{\frac{p-1}{p}}|f|_{L^{p}([0,T];\R)}.
$$
This show, at once, that $F$ is well defined as Bochner integral in $V$ and that  $\lim_{t\to 0^+} F(t)=0$, so $F$ is continuous at $0$.

Let us show now that $F$ is continuous on each interval of the form $\left[{t_0}, T\right]$ with $t_0\in(0,T)$. Set, for $\varepsilon\in (0,{t_0})$,
$$
F_\varepsilon(t):=\int_0^{t-\varepsilon}  g(t-s) f(s)ds, \ \ t\in[t_0,T].
$$
By dominated convergence we easily see that $F_\varepsilon$ is continuous on $\left[{t_0},T\right]$. Moreover, using again H\"older's inequality we have, for
%suitable $C_T>0$ and
all $t\in[t_0,T]$
\begin{align*}
|F(t)-F_\varepsilon(t)| \leq
\left(\int_{t-\varepsilon}^t (t-s)^{-\beta\frac{p}{p-1}}ds\right)^{\frac{p-1}{p}}|f|_{L^{p}([0,T];\R)}
= \left(\frac{\varepsilon^{\kappa}}{\kappa}\right)^{\frac{p-1}{p}}|f|_{L^{p}([0,T];\R)}.
\end{align*}
This show  $F_\varepsilon\to F$ uniformly in $[t_0,T]$, hence $F$ is continuous in $[t_0,T]$, concluding the proof.
\end{proof}
\begin{Proposition}\label{prop:SE}
 For each $u(\cdot)\in \calu$, the {process}
\begin{equation}\label{OU}
X(t;x,u(\cdot)):=e^{tA}x+ \int_0^t e^{(t-s)A}\sigma dW(s)
+\int_0^t \overline{e^{(t-s)A} G}L(u(s))ds,
\end{equation}
{is well-defined and belongs to $\calk_\calp^{1,loc}(H)$}. Moreover, it  admits a version with continuous trajectories.
\end{Proposition}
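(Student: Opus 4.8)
The plan is to decompose $X=Y_1+Y_2+Y_3$ according to the three summands in \eqref{OU}, namely $Y_1(t)=e^{tA}x$, $Y_2(t)=\int_0^t e^{(t-s)A}\sigma\,dW(s)$ and $Y_3(t)=\int_0^t \overline{e^{(t-s)A}G}\,L(u(s))\,ds$, to prove that each of them is well defined, belongs to $\calk_\calp^{1,loc}(H)$ and admits a continuous version, and to conclude by summing (the relevant classes being closed under sums). The deterministic drift $Y_1$ is immediate: $t\mapsto e^{tA}x$ is continuous by the $C_0$-property of $\{e^{tA}\}_{t\geq 0}$, it is deterministic, and it is bounded on every $[0,T]$, so it lies in $\calk_\calp^{1,loc}(H)$ with continuous trajectories.

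For the stochastic convolution $Y_2$ I would rely on the classical Ornstein--Uhlenbeck theory (see \cite[Ch.\,5]{DaPratoZabczyk14}). Assumption \ref{hp4:ABQforOU}(ii) ensures $e^{sA}\sigma\sigma^{*}e^{sA^{*}}\in\call_1(H)$ for $s>0$ together with $\int_0^t {\rm Tr}[e^{sA}\sigma\sigma^{*}e^{sA^{*}}]\,ds<\infty$, which is exactly the condition guaranteeing that $Y_2(t)$ is a well-defined centered Gaussian element of $H$ with finite second moment; since $Y_2$ is also adapted and mean-square continuous, $Y_2\in\calk_\calp^{2,loc}(H)\subseteq\calk_\calp^{1,loc}(H)$. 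The strengthened requirement $\int_0^t s^{-2\gamma}{\rm Tr}[e^{sA}\sigma\sigma^{*}e^{sA^{*}}]\,ds<\infty$ for some $\gamma\in(0,1/2)$ is precisely what makes the factorization method applicable and thus delivers a version of $Y_2$ with continuous trajectories.

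The substantive term is the control convolution $Y_3$, where Lemma \ref{lemmaDP} does the work. Fix $T>0$. Since $u\in\calu$, Fubini gives $\E\big[\int_0^T|u(s)|_U^{p}\,ds\big]<\infty$, so, using the growth bound $|L(u)|_K\le C_L(1+|u|_U)$ from Assumption \ref{hp4:ABQforOU}(iv), for $\P$-a.e. $\omega$ the path $s\mapsto L(u(s,\omega))$ belongs to $L^p([0,T];K)$. On $[0,T]$ the estimate \eqref{esg} reads $|\overline{e^{sA}G}|_{\call(K,H)}\le C_0(s^{-\beta}\vee 1)$ with $C_0=C_G\,e^{(a_G\vee 0)T}$, and $s\mapsto\overline{e^{sA}G}$ is strongly continuous by Remark \ref{Rem:ext}; as $p>\frac{1}{1-\beta}$, applying Lemma \ref{lemmaDP} pathwise (with $E=K$, $V=H$, $f=L(u(\cdot,\omega))$ and $g=\overline{e^{\,\cdot\,A}G}$) shows that $Y_3(\cdot,\omega)$ is well defined and continuous for a.e. $\omega$. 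This yields the continuous trajectories and, together with the progressive measurability of $s\mapsto L(u(s))$, the progressive measurability of $Y_3$.

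To place $Y_3$ in $\calk_\calp^{1,loc}(H)$ I would invoke Lemma \ref{lemmaDP} a second time, now in the Banach spaces $E=L^p(\Omega;K)$, $V=L^p(\Omega;H)$: one lets $\tilde g(s)$ be the $\omega$-wise action of $\overline{e^{sA}G}$, which has the same operator norm and is strongly continuous by dominated convergence, and takes $f(s)=L(u(s))$, which belongs to $L^p([0,T];L^p(\Omega;K))$ by admissibility of $u$. The lemma then gives that $t\mapsto Y_3(t)$ is continuous from $[0,T]$ into $L^p(\Omega;H)$, hence into $L^1(\Omega;H)$, whence $\sup_{t\in[0,T]}\E[|Y_3(t)|_H]<\infty$, while a Fubini-type argument identifies this $L^p(\Omega;H)$-valued Bochner integral with the pathwise integral of the previous step. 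Summing the three contributions then gives $X\in\calk_\calp^{1,loc}(H)$ with a continuous version. I expect the only delicate points to be in the treatment of $Y_3$: verifying the strong measurability and strong continuity needed to run Lemma \ref{lemmaDP} in the vector-valued setting, and reconciling the a.s.\ pathwise integral with the $L^p(\Omega;H)$-valued Bochner integral.
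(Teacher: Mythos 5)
Your proposal is correct and follows essentially the same route as the paper: the paper likewise treats the control convolution by two applications of Lemma \ref{lemmaDP} (one $\omega$-wise to obtain continuous trajectories, one in the vector-valued setting $E=L^1(\Omega;K)$, $V=L^1(\Omega;H)$ --- where you use $L^p(\Omega;\cdot)$ --- to obtain mean continuity and membership in $\calk_\calp^{1,loc}(H)$), and it handles the stochastic convolution exactly as you do, by invoking Assumption \ref{hp4:ABQforOU}(ii) and \cite[Th.\,5.2 and Th.\,5.11]{DaPratoZabczyk14}. The only differences are the order of the two applications of the lemma and your slightly stronger $L^p(\Omega)$ choice, neither of which changes the substance.
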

\begin{proof}
{By Remark \ref{Rem:ext} and  Assumption \ref{hp4:ABQforOU}(iii)-(iv), we can apply  Lemma \ref{lemmaDP} with
$$E =L^1(\Omega;K), \ \ V=L^1(\Omega;H), \ \   f(s)= L(u(s))$$
and  $g(s)\in\mathcal{L}(E,V)$ defined by $$ \ \  \big[g(s)Z\big](\omega):= \overline{e^{sA}G}Z(\omega), \ \ \  Z\in L^1(\Omega;K).
$$
It follows that
\begin{equation}\label{conv}
t\longmapsto \int_0^t \overline{e^{(t-s)A} G}L(u(s))ds
\end{equation}
is well defined as stochastic process and belongs to $\calk_\calp^{1,loc}(H)$.
We can repeat the argument employed above dealing now with  trajectories.  Fixing $\omega\in \Omega$ and applying  Lemma \ref{lemmaDP} with
$$E=K, \ \ V=H, \ \   f(s):= L(u(s)(\omega)), \ \
 g(s)= \overline{e^{sA}G},
$$
it follows  that the map
$$\R^+\to H, \ \ t\mapsto \int_0^t \overline{e^{(t-s)A} G}L(u(s)(\omega))ds$$
is continuous. The latter integral expression, for varying $\omega\in\Omega$, clearly provides a version of \eqref{conv} with continuous trajectories.}

{
On the other hand, in view of Assumption \ref{hp4:ABQforOU}(ii),  from
 \cite[Th.\,5.2 and Th.\,5.11]{DaPratoZabczyk14} we know that the \emph{stochastic convolution}
 $$W^A(t):=\int_0^t e^{(t-s)A}\sigma dW(s), \ \ \ t\geq 0,$$ is a (well defined) stochastic process belonging to $\calk_\calp^{2,loc}(H)$ and  admitting a version with continuous trajectories, concluding the proof.}
\end{proof}
We refer to the  process \eqref{OU} as the \emph{controlled Ornstein-Uhlenbeck process} or \emph{mild solution} of SDE \myref{eq4:SE1ellbis}. We always consider its version (unique, up to indistinguishability) with continuous trajectories.

Let $\lambda>0$, $x\in H$, and let $l:H\times \Lambda\to\R$ be such that
\begin{equation}
l \ \mbox{is measurable and bounded from below}\footnote{Cases where $l$ is not bounded from below can be treated adding suitable growth conditions which depends on the specific problem at hand. We do not do it here for brevity. See also Remark \ref{rm:polgrowth} on this.}.
\label{eq4:hpl}
\end{equation}
Consider the functional
\begin{equation}
 	J(x;u(\cdot)) ={\E}\left[ \int_{0}^{\infty}e^{-\lambda s}l\big(X(s;x,u(\cdot)),u(s)\big)\,ds\right], \ \ \ \ x\in H, \ u(\cdot)\in \calu.
    \label{eq4:CF1ell}
\end{equation}
By  \eqref{eq4:hpl}, the functional  above is well defined {(possibly with value $+\infty$)} for all $x\in H$ and $u(\cdot)\in \calu$.
%\,(\footnote{{It is possible to consider also cases when $l$ is not bounded from below provided suitable conditions are satisfied to keep the value function finite.}}).
% see e.g. \cite{BiffisGozziProsdocimi16}}}).
The stochastic optimal control problem consists in minimizing the functional  over the set of admissible controls $\calu$, i.e. in solving the optimization problem
% (\ref{eq:Vfunction-strong})
%(here, to simplify the notation, we write $V(t,x)$ instead of $V_t^\mu(t,x)$).
\begin{equation}
 		V(x) \coloneqq
 \inf_{u(\cdot)\in \calu } J(x;u(\cdot)), \ \ \ \ x\in H.
 	    \label{eq4:VF1ell}
\end{equation}
The function $V:H\to \R\cup\{+\infty\}$ is the so called value function of the optimization problem. If $x \in H$ is such that $V(x)< \infty$ and $u^*(\cdot)$ is such that $V(x)=J(x;u^*(\cdot))$, then $u^*(\cdot)$ is called {\em optimal strategy} and the associated state trajectory is called {\em optimal state}; moreover the couple
$\big(u^*(\cdot),X(\cdot;x,u^*(\cdot))\big)$ is called an {\em optimal couple}.

\section{Generalized Dynkin's formula}\label{SS:DYN}
The aim of the present section is to prove an abstract Dynkin formula for the controlled Ornstein-Uhlenbeck process \eqref{OU} composed with suitably smooth functions $\varphi:H\to\R$.

{\subsection{Transition semigroups, generators and $G$-derivatives}\label{SSS:DYN1}
}

We consider the family of transition semigroups associated to the uncontrolled version of \eqref{OU} and to the same process under  constant controls.  Precisely, we denote by $X^{(k)}(\cdot;x)$, where $k\in K$, the Ornstein-Uhlenbeck process starting at $x\in H$ with extra drift $Gk$; i.e., the mild solution to
\begin{equation}
\begin{cases}
dX(t)=\big[AX(t) + Gk\big]\,dt
+ \sigma dW(t), \ \ \ t\geq 0,\\
\label{eq4:SE1ellbisnew}
X(0)=x.
\end{cases}
\end{equation}
Its explicit expression is
 \begin{equation}\label{OUnew}
X^{(k)}(t;x):=e^{tA}x+ \int_0^t e^{(t-s)A}\sigma\, dW(s)+\int_0^t \overline{e^{(t-s)A} G}k\,ds.
\end{equation}
Correspondingly, we define the family of linear operators
$\big\{P^{(k)}_t\big\}_{t\geq 0}$  in the space $C_b(H)$ as
\begin{equation}\label{eq:semi}
P^{(k)}_t[\varphi](x) :=\E\big[\varphi(X^{(k)}(t;x))\big],  \ \ \ \ \varphi\in C_b(H),\ x \in H, \ t\geq 0.
\end{equation}
%We  recall (see e.g. \cite[Theorem 5.2]{DaPratoZabczyk14}) that, given $t\geq 0$, the stochastic convolution $\int_0^t e^{(t-s)A}\sigma dW(s)$
%is a centered Gaussian random variable with law $\caln_{Q_t}$, where
%$Q_t:=\int_0^t e^{sA}\sigma \sigma^* e^{sA^*}ds$ is the covariance operator. Hence we can write
%\begin{equation}\label{eq:semibis}
%P^{(k)}_t[\varphi](x)=
%\int_H \varphi\left(y+e^{tA}x+{\red{\int_0^t\overline{e^{(t-s)A}G} kds}}\right)\caln_{Q_t}(dy).
%\quad t\ge 0,\; x \in H,\; k\in K.
%z\end{equation}
{In Proposition \ref{prop:der}(i) below
%\cite[Corollary 9.15]{DaPratoZabczyk14})
we will show that the family $\big\{P^{(k)}_t\big\}_{t\geq 0}$ is  a one-parameter semigroup of linear operators in the space $C_b(H)$. {According to the related the literature, we call it} the \emph{transition semigroup} associated to  the process $X^{(k)}$.}
Unfortunately, such semigroup is not in general a $C_0$-semigroup in $C_b(H)$, {not even in the case
%of the uncontrolled Ornstein-Uhlenbeck process, i.e. in the case
$k=0$}.
%, even in the simplest case when $H=\R$ and $A\ne 0$. Indeed, see e.g. \cite{Cerrai94}, it is not true that, for any $\phi\in C_b(H)$, $\lim_{s\to 0^+}P^{(k)}_s\phi =\phi$ in $C_b(H)$ and this problem arises also in many other functional spaces \red{SPECIFICARE??}.
Indeed, in the framework of spaces of functions not vanishing at infinity, the $C_0$-property, i.e. the fact that $\lim_{s\to 0^+}P^{(k)}_s\varphi =\varphi$ {in the sup norm}
%\red{in functional sense}
for every $\varphi$, fails even in  basic cases.
For instance, this property fails in the case of the Ornstein-Uhlenbeck semigroup in the space $C_b(\mathbb{R})$ (see, e.g., \cite[Example 6.1]{Cerrai94} for a counterexample in $UC_b(\R)$, or  \cite[Lemma 3.2]{DaPratoLunardi95}, which implies this is a $C_0$-semigroup  in $UC_b(\R)$ if and only if the drift of the SDE vanishes). Even worse: given $\varphi\in C_b(H)$,  the map $[0,+\infty)\to C_b(H)$, $t\mapsto P_t^{(k)}\varphi$  is not in general measurable, as shown in \cite[Example 4.5]{FedericoGozzi16}.
This prevents, for instance, to intend in Bochner sense, in the space $C_b(H)$ for each $g\in C_b(H)$, {the integral defining} the Laplace transform
 \begin{equation}\label{eq:4resolventintegral1bis}
 \int_0^{\infty} e^{-\lambda s} P^{(k)}_s [g] ds.
\end{equation}
Nevertheless, one can get, in a weaker sense,
%pointwise sense, for each $x\in H$,
 several statements of the classical theory of $C_0$-semigroups.
This is performed, e.g., by the theory of $\calk$-semigroups
(introduced in \cite{Cerrai94}, see also \cite{CerraiGozzi95}, with the different terminology of \emph{weakly continuous semigroups})
%\cite{CerraiGozzi95}))
and  $\pi$-semigroups
(introduced in \cite{Priola99, Priola99-thesis}).
Both theories (a survey of which can be found in Appendix B.5 of \cite{FGSbook})
can be applied here getting substantially the same results. We employ the $\pi$-semigroups approach, as it
 seems more natural in our context.
%the definition of generator given in the context of $\pi$-semigroups is more suitable to prove our key result Proposition \ref{lemma:sem-spezz}.
%\begin{Remark}
%The theory of weakly continuous semigroups \cite{Cerrai94} (also called $\calk$-semigroups \cite{CerraiGozzi95})  is also employable here. In our context the approach through $\pi$-semigroups seems more natural with respect to the former one, as the definition of generator given in the context of $\pi$-semigroups is more suitable to prove our key result Proposition \ref{lemma:sem-spezz}.
%
%Also the more recent theory of $C_0$-sequentially equicontinuous semigroups in locally convex spaces \cite{FedericoRosestolato16} can be employed, but it goes beyond the needs of this paper, which only deals with the Ornstein-Uhlenbeck process. {Such theory might be useful
%in extending the results of this paper to more general types of controlled processes.}
%\end{Remark}
%\begin{Remark}
%The theory of weakly continuous semigroups \cite{Cerrai94} (also called $\calk$-semigroups \cite{CerraiGozzi95})  is also employable here. In our context the approach through $\pi$-semigroups seems more natural with respect to the former one, as the definition of generator given in the context of $\pi$-semigroups is more suitable to prove our key result Proposition \ref{lemma:sem-spezz}.
%\end{Remark}
The definition of $\pi$-convergence can be found e.g. in \cite[p.\,111]{EthierKurtz86},  where it is called $\textsl{bp}$-convergence (\emph{bounded-pointwise} convergence) and in \cite{Priola99, Priola99-thesis}; the former in the space $C_b(H)$, the latter in the space $UC_b(H)$.
%For the definition of
%$\mathcal{K}$-convergence in the space $UC_b$ the reader is referred to \cite{Cerrai94}, \cite{CerraiGozzi95}; for the space $C_m$ (respectively, $UC_m$) the reader is referred to \cite{GoldysKocan01}
%(respectively, to \cite{Cerrai95}\footnote{In that paper, the $\calk$-convergence in $UC_m$ is called $\calk_m$-convergence.}).
%%The same definitions hold if
%the $C_m(H)$ is replaced by  $UC_m(H)$.
%For instance for the definitions of $\pi$-convergence and $\mathcal{K}$-convergence
%in  $C_m(H)$ (for any $m>0$) we would simply substitute
%$C_b(H)$ with $C_m(H)$ (and so its norm $\|\cdot\|_0$ with $\|\cdot\|_{C_m(H)}$)
%and
%$C_b(I \times H)$ with $C_m(I \times H)$ (and so its norm $\|\cdot\|_0$ with $\|\cdot\|_{C_m(I \times H)}$).
%\footnote{Clearly these cases would include the definitions when the basic spaces are the subspaces $UC_b(H)$ and $UC_m(H)$.}.}

\begin{Definition}[$\pi$-convergence]\label{dfB:piconvergence}
A sequence of functions $(f_n)\subseteq C_b(H)$ is said to be $\pi$-convergent to a function $f\in C_b(H)$
if
$$  \sup_{n\in\mathbb{N}} |f_n|_{C_b(H)} < \infty \ \ \mbox{and} \ \ \lim_{n\to\infty} f_n(x) = f(x), \quad \forall x\in H.
$$
Such convergence is denoted by
$
f_n \xrightarrow{\pi} f$ or by $f = \pi\mbox{-}\!\!\lim_{n\to \infty} f_n.
$
\end{Definition}

Now we recall the definition of $\pi\mbox{-}$semigroup as given  in \cite{Priola99, Priola99-thesis}. Here we state it in the space of continuous and bounded functions (the aforementioned references deal with the space of \emph{uniformly} continuous and bounded functions, {but also explain how to extend the definition to $C_b(H)$}).
\begin{Definition}\label{2015-10-30:01bis}
A semigroup  $\big\{P_t\big\}_{t\geq 0}$ of bounded linear operators on $C_b(H)$
is called a $\pi\mbox{-}$semigroup on $C_b(H)$ if it satisfies the following conditions.
\begin{enumerate}[(P1)]
\item\label{2015-10-13:09bis} There exist $M\geq 1$ and $\alpha\in\R$ such that $|P_t[f]|_\infty\leq M e^{\alpha t}|f|_\infty$ for every $t\in \mathbb{R}^+$, $f\in C_b(H)$.
  \item\label{2015-10-30:04bis} For each $x\in H$ and $f\in C_b(H)$, the map $\mathbb{R}^+\rightarrow \mathbb{R}, \ t \mapsto P_t[f](x)$ is continuous.
  \item\label{2015-10-30:06bis} We have
   $$\{f_n\}_{n\in \mathbb{N}}\subset C_b(H), \ \ f_n \xrightarrow{\pi} f \in C_b(H) \ \Longrightarrow \ P_t [f_n] \xrightarrow{\pi} P_t[f]\, \ \ \forall t\geq 0.$$
  \end{enumerate}
\end{Definition}
Define
\begin{equation}\label{eq:priola1}
\cald(\cala^{(k)}):=\left\{\varphi\in C_b(H): \ \exists\  \pi\mbox{-}\!\!\lim_{t\rightarrow 0^+} \frac{P_t^{(k)}[\varphi]-\varphi}{t}\right\}
\end{equation}
and
\begin{equation}\label{eq:priola2}
\cala^{(k)}[\varphi] := \pi\mbox{-}\!\!\lim_{t\rightarrow 0^+} \frac{P_t^{(k)}[\varphi]-\varphi}{t},  \ \ \ \ \  \varphi \in \cald(\cala^{(k)}).
\end{equation}
{{It is proved  (see \cite[Lemma.\,5.7]{CerraiGozzi95} combined with the discussion of \cite[Sec.\,4.3]{Priola99}) that, for $\varphi$ sufficiently smooth,
\begin{equation}\label{aaa}
\cala^{(0)} [\varphi] (x) = \frac12 {\rm Tr}\left[\sigma \sigma^* D^2 \varphi (x)\right] +
\<x,A^*D\varphi (x)\>.
\end{equation}
We will use \eqref{aaa} to formally motivate the definition of mild solution  (Definition \ref{df4:solmildHJBell})
%The
of the HJB equation associated to the control problem of Section 3.}}

\begin{Proposition}\label{prop:der}
Let $k\in K$.
\begin{enumerate}[(i)]
\item
The family of linear operators  $\big\{P_t^{(k)}\big\}_{t\geq 0}$ defined in \eqref{eq:semi} is a $\pi$-semigroup on  $C_b(H)$.
We denote by $\cala^{(k)}$ its infinitesimal generator.
%is defined as
%\begin{equation}\label{eq:priola1}
%\cald(\cala^{(k)}):=\left\{\varphi\in C_b(H): \ \exists\  \calk\mbox{-}\!\!\lim_{t\rightarrow 0^+} \frac{P_t^{(k)}[\varphi]-\varphi}{t}\right\},
%\end{equation}
%\begin{equation}\label{eq:priola2}
%\cala^{(k)}[\varphi] := \calk\mbox{-}\!\!\lim_{t\rightarrow 0^+} \frac{P_t^{(k)}[\varphi]-\varphi}{t},  \ \ \ \ \  \varphi \in \cald(\cala^{(k)}).
%\end{equation}
\item
The  operator
%$
%R^{(k)}_\lambda: C_b(H)\to C_b(H)$
%defined by
$$
R^{(k)}_\lambda[g](x):=\int_0^{\infty} e^{-\lambda s} P^{(k)}_s [g](x) ds, \ \ \ \  g\in C_b(H),  \  x\in H,
$$
belongs to $\call (C_b(H))$ for every $\lambda>0$ and is the resolvent of $\cala^{(k)}$:
$$\big(\lambda-\cala^{(k)}\big)^{-1}= R_\lambda^{(k)}, \ \ \ \forall \lambda>0.$$
\item
We have\footnote{At $t=0$ the derivative is intended as right derivative.}
$$\frac{d}{dt}P^{(k)}_t[\varphi](x)=P_t^{(k)} \big[\cala^{(k)} [\varphi]\big](x)=\cala^{(k)}\big[P_t^{(k)} [ \varphi]\big](x),  \ \ \
\forall \varphi\in \cald(\cala^{(k)}), \ \forall x\in H, \ \forall t\geq 0.$$
\end{enumerate}
\end{Proposition}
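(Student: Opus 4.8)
The plan is to first derive the flow (cocycle) property of the mild solution \eqref{OUnew}: for $0\le s\le t$,
$$X^{(k)}(t;x)=e^{(t-s)A}X^{(k)}(s;x)+\int_s^t e^{(t-r)A}\sigma\,dW(r)+\int_s^t \overline{e^{(t-r)A}G}k\,dr,$$
which follows from the semigroup law of $e^{tA}$ together with identity \eqref{eq:semi1}. Since the two integrals over $[s,t]$ are independent of $\mathcal F_s$ and, by stationarity of the Brownian increments, have the same law as the objects defining $X^{(k)}(t-s;\cdot)$, the Markov property $\E[\varphi(X^{(k)}(t;x))\mid\mathcal F_s]=P^{(k)}_{t-s}[\varphi](X^{(k)}(s;x))$ holds; taking expectations yields the semigroup identity $P^{(k)}_{t+s}=P^{(k)}_t P^{(k)}_s$. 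Then I would check the three properties of Definition \ref{2015-10-30:01bis}. Property (P\ref{2015-10-13:09bis}) is immediate with $M=1$, $\alpha=0$, since $|P^{(k)}_t[\varphi]|_\infty\le|\varphi|_\infty$. The Feller property $P^{(k)}_t[\varphi]\in C_b(H)$ and (P\ref{2015-10-30:04bis}) both reduce to convergence of $X^{(k)}$: continuity in $x$ holds because $X^{(k)}(t;x_n)-X^{(k)}(t;x)=e^{tA}(x_n-x)\to 0$ deterministically, while (P\ref{2015-10-30:04bis}) holds because, by Proposition \ref{prop:SE}, $t\mapsto X^{(k)}(t;x)$ is continuous into $L^1(\Omega;H)$, hence in probability; in both cases bounded convergence gives the claim for $\varphi\in C_b(H)$. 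Finally (P\ref{2015-10-30:06bis}) is dominated convergence: if $f_n\xrightarrow{\pi}f$ then $\sup_n|P^{(k)}_t[f_n]|_\infty\le\sup_n|f_n|_\infty<\infty$ and $P^{(k)}_t[f_n](x)=\E[f_n(X^{(k)}(t;x))]\to P^{(k)}_t[f](x)$ pointwise.

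\textbf{Part (ii).} Because $s\mapsto P^{(k)}_s[g](x)$ is continuous (by (P\ref{2015-10-30:04bis})) and bounded by $|g|_\infty$, the integral defining $R^{(k)}_\lambda[g](x)$ converges as an ordinary scalar integral for every $x$, with $|R^{(k)}_\lambda[g](x)|\le|g|_\infty/\lambda$; continuity in $x$ follows from continuity of each $P^{(k)}_s[g]$ and dominated convergence, so $R^{(k)}_\lambda\in\call(C_b(H))$ with norm $\le 1/\lambda$. To identify it with the resolvent I would compute, using Fubini (licit by boundedness) and the semigroup property,
$$P^{(k)}_t[R^{(k)}_\lambda[g]](x)=\int_0^\infty e^{-\lambda s}P^{(k)}_{t+s}[g](x)\,ds=e^{\lambda t}\int_t^\infty e^{-\lambda r}P^{(k)}_r[g](x)\,dr.$$
Dividing by $t$ and letting $t\to 0^+$, the decomposition into $\frac{e^{\lambda t}-1}{t}$ times the tail integral plus $-\frac1t\int_0^t e^{-\lambda r}P^{(k)}_r[g](x)\,dr$ yields the pointwise limit $\lambda R^{(k)}_\lambda[g](x)-g(x)$; moreover these difference quotients are bounded in sup-norm uniformly in $t\in(0,1]$, so the convergence is a genuine $\pi$-limit. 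Hence $R^{(k)}_\lambda[g]\in\cald(\cala^{(k)})$ and $(\lambda-\cala^{(k)})R^{(k)}_\lambda[g]=g$.

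\textbf{Part (iii) and the remaining resolvent identity.} For the first equality in (iii) I would write the right difference quotient at $t$ as $P^{(k)}_t\big[\tfrac{P^{(k)}_h[\varphi]-\varphi}{h}\big]$ and pass to the limit: since $\tfrac{P^{(k)}_h[\varphi]-\varphi}{h}\xrightarrow{\pi}\cala^{(k)}[\varphi]$ by \eqref{eq:priola2}, property (P\ref{2015-10-30:06bis}) gives $P^{(k)}_t\big[\tfrac{P^{(k)}_h[\varphi]-\varphi}{h}\big]\xrightarrow{\pi}P^{(k)}_t[\cala^{(k)}\varphi]$, so the right derivative of $t\mapsto P^{(k)}_t[\varphi](x)$ equals $P^{(k)}_t[\cala^{(k)}\varphi](x)$. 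The same identity, read as $\tfrac{P^{(k)}_h[P^{(k)}_t\varphi]-P^{(k)}_t\varphi}{h}=P^{(k)}_t\big[\tfrac{P^{(k)}_h\varphi-\varphi}{h}\big]\xrightarrow{\pi}P^{(k)}_t[\cala^{(k)}\varphi]$, shows simultaneously that $P^{(k)}_t[\varphi]\in\cald(\cala^{(k)})$ with $\cala^{(k)}[P^{(k)}_t\varphi]=P^{(k)}_t[\cala^{(k)}\varphi]$, giving the second equality. Since $t\mapsto P^{(k)}_t[\cala^{(k)}\varphi](x)$ is continuous by (P\ref{2015-10-30:04bis}), the standard fact that a continuous function with continuous right derivative is $C^1$ upgrades this to a genuine two-sided derivative, completing (iii). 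Finally, integrating (iii) against $e^{-\lambda t}$ and integrating by parts (the boundary term at $+\infty$ vanishing by boundedness) gives $R^{(k)}_\lambda[\cala^{(k)}\varphi]=\lambda R^{(k)}_\lambda[\varphi]-\varphi$, i.e. $R^{(k)}_\lambda(\lambda-\cala^{(k)})\varphi=\varphi$; combined with part (ii) this proves $R^{(k)}_\lambda=(\lambda-\cala^{(k)})^{-1}$.

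\textbf{Main obstacle.} I expect the delicate point not to be any single computation but the consistent bookkeeping of $\pi$-convergence: every pointwise-in-$x$ limit must be accompanied by a uniform sup-norm bound to qualify as a $\pi$-limit, and every interchange of $P^{(k)}_t$ (or of a difference quotient) with the Laplace-transform integral must be justified through Fubini and property (P\ref{2015-10-30:06bis}) rather than through Bochner integration in $C_b(H)$, which is unavailable here since $t\mapsto P^{(k)}_t[g]$ need not even be measurable as a $C_b(H)$-valued map. Alternatively, once (i) is established one may simply invoke the abstract $\pi$-semigroup theory of \cite{Priola99} (see also \cite{CerraiGozzi95} and \cite[App.\,B.5]{FGSbook}), which yields (ii) and (iii) directly; I would nonetheless prefer to record the short self-contained arguments above.
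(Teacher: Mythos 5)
Your proposal is correct and follows essentially the same route as the paper: part (i) is established exactly as in the paper (flow/Markov property from the semigroup law of $e^{tA}$ and \eqref{eq:semi1}, then (P1)--(P3) by boundedness and dominated convergence), while for (ii)--(iii) the paper simply invokes \cite[Prop.\,3.2, Prop.\,3.6]{Priola99}, which is the alternative you mention at the end; the self-contained arguments you supply for (ii)--(iii) are the standard $\pi$-semigroup computations from that reference and are sound, including the uniform sup-norm bounds needed to upgrade pointwise limits to $\pi$-limits.
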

\begin{proof}
Claims (ii)-(iii) follow from
 \cite[Prop.\,3.2,\ Prop.\,3.6]{Priola99} or \cite[Prop.\,6.2.7, Prop.\,6.2.11]{Priola99-thesis}(\footnote{These references deal mainly in the space of uniformly continuous and bounded functions --- we warn that the author denotes by $C_b(H)$ the latter space. The extension to  the space of continuous and bounded function --- our space $C_b(H)$ --- is illustrated in \cite[Sec.\,5]{Priola99} and \cite[Sec.\,6.5]{Priola99-thesis}.}) once one proves claim (i), which we prove below.

\emph{Proof of (i).}  {{First of all, we prove that $\big\{P_t^{(k)}\big\}_{t\geq 0}$ is a semigroup of linear operators on $C_b(H)$. The fact that $P_0^{(k)}=I$ and that $P_t^{(k)}\in \call(C_b(H))$ for all $t\geq 0$ is immediate. The semigroup property of $\big\{e^{tA}\big\}_{t\geq 0}$ and \eqref{eq:semi1}
yield
$$
X^{(k)} (t+s;x)= e^{sA} X^{(k)}(t;x)+\int_{0}^s   e^{(s-r)A}\sigma\, dW(t+r) + \int_{0}^s   \overline{e^{(s-r)A}G} k\,dr, \ \ \ \forall t\geq 0, \forall s >0.
$$
The latter shows the strong Markov property of $X^{(k)}$ and then the fact that $\big\{P^{(k)}_t\big\}_{t\geq 0}$ satisfies the semigroup property follows as consequence (see, e.g., \cite[Cor.\,9.15]{DaPratoZabczyk14}).}}
\smallskip
Now we show  the other properties of Definition \ref{2015-10-30:01bis}.
(P1) is obviously verified with $M=1$ and $\alpha=0$.
%The proof of this fact is in \cite[Subsec.\,6.3.3]{Priola99-thesis} (see also \cite[Subsec.\,4.3]{Priola99}).
%Here we show the extension to $k\neq 0$.
 (P2) of Definition \ref{2015-10-30:01bis} corresponds to
\begin{equation}\label{OUcont}
\E\big[f(X^{(k)}(t;x)\big]\
 \stackrel{t\to t_0}{\longrightarrow}\ \E\big[f(X^{(k)}(t_0;x)\big], \ \ \ \forall f\in C_b(H), \ \forall x\in H, \ {\forall t_0\ge 0}.
\end{equation}
The latter follows from continuity of trajectories of  $X^{(k)}(\cdot;x)$ and  dominated convergence.
Finally, (P3)  of Definition \ref{2015-10-30:01bis} is verified by dominated convergence.
\end{proof}

\color{black}

A key step  towards the main goal of this section, i.e. the proof of a  generalized Dynkin formula for $\varphi(X(\cdot;x,u(\cdot))$ with a suitably regular $\varphi$, consists in showing the following decomposition of $\mathcal{A}^{(k)}$ when acting on the function $\varphi$
\begin{equation}\label{eq4:Aksplit}
\varphi\in \mathcal{D}(\mathcal{A}^{(0)})\cap C^{1,G}_b(H) \ \Longrightarrow \ \varphi\in \mathcal{D}(\mathcal{A}^{(k)}) \ \mbox{and}   \
\cala^{(k)}[\varphi ]=
\cala^{(0)}[\varphi]+\langle D^G\varphi(\cdot),k\rangle_K \ \ \ \ \ \forall k\in K.
\end{equation}
Looking at $\big\{P_t^{(k)}\big\}_{t\geq 0}$ as to a perturbation  of $\big\{P_t^{(0)}\big\}_{t\geq 0}$, \eqref{eq4:Aksplit} is obtained  in
{{\cite[Theorem 5.2]{GoldysKocan01}
in the context of $C_0$-semigroups with respect to \emph{mixed topology} of $C_b(H)$ and in \cite[Theorem 4.6]{farkas} in the context of \emph{bi-continuous} semigroups.
However, these references would require the assumptions that  $\varphi \in C^1_b(H)$ and $A,\sigma$ are such that  $C^1_b(H)\subseteq \mathcal{D}(\mathcal{A}^{(0)})$ and 
$G\in \call(H)$. This would allow, in particular, to write the term $\langle D^G\varphi(\cdot),k\rangle_K$ in the formula above    as $\langle D\varphi(\cdot),Gk\rangle_H$, simplifying  a lot the framework.}}
%---  to be more precise, under assumptions on $A,\sigma$ ensuring that $C^1(H)\subseteq \mathcal{A}^{(0)}$ --- and, in terms of our framework, when $G\in \call(H)$ --- so that the term $\langle D^G\varphi(\cdot),k\rangle_K$ in the formula above can be read as $\langle D\varphi(\cdot),Gk\rangle_H$.
Here we need to be sharper in this respect in order to cover other cases of interest in applications, e.g., the case of unbounded $G$, occurring in boundary control problems. To this purpose we introduce
 the class of functions
{\small{\begin{equation}\label{class:fun}
\begin{split}
\mathcal{S}^{A,G}(H):=\left\{\varphi\in C^{1,G}_b(H): \
\lim_{t\to 0^+} \frac{\varphi\left( z(t)+\dis\int_0^t\overline{e^{sA}G}k ds\right)- \varphi(z(t))}{t}=\left\langle D^G \varphi(z(0)),k\right\rangle_K \ \forall z\in C(\R^+;H) \right\}.
\end{split}
\end{equation}}}
Our generalized Dynkin formula will hold for functions belonging to $\mathcal{D}(\cala^{(0)})\cap \mathcal{S}^{A,G}(H)$. In Appendix \ref{sec:app} we  provide sufficient  conditions on $A,G, \varphi$  ensuring that $\varphi\in \mathcal{S}^{A,G}(H)$.

\begin{Proposition}\label{lemma:sem-spezz} Let $\varphi\in \mathcal{D}(\cala^{(0)})\cap \mathcal{S}^{A,G}(H)$.  Then
\eqref{eq4:Aksplit} holds.
% $\varphi\in \mathcal{D}(\mathcal{A}^{(0)})\cap \mathcal{S}_1$. Then $\varphi\in \cald(\cala^{(k)})$ for every $k\in K$ and
%\begin{equation}\label{eq4:Aksplit}
%\cala^{(k)}[\varphi ]=\cala^{(0)}[\varphi]+\langle D^G\varphi(\cdot),k\rangle_K.
%\end{equation}
\end{Proposition}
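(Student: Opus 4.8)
The plan is to establish the $\pi$-convergence
$$
\frac{P_t^{(k)}[\varphi]-\varphi}{t}\ \xrightarrow{\pi}\ \psi:=\cala^{(0)}[\varphi]+\<D^G\varphi(\cdot),k\>_K \qquad (t\to 0^+),
$$
since by the very definition of the generator of a $\pi$-semigroup (see \eqref{eq:priola1}--\eqref{eq:priola2}) this yields at once $\varphi\in\cald(\cala^{(k)})$ together with the identity \eqref{eq4:Aksplit}. First I would check that the candidate $\psi$ is admissible as a target, i.e. $\psi\in C_b(H)$: this holds because $\cala^{(0)}[\varphi]\in C_b(H)$ (as $\varphi\in\cald(\cala^{(0)})$) and $\<D^G\varphi(\cdot),k\>_K\in C_b(H)$ (as $\varphi\in C^{1,G}_b(H)$, so $D^G\varphi\in C_b(H,K)$).

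The starting point is the additive splitting of the two state processes. Writing $\eta_k(t):=\int_0^t\overline{e^{(t-s)A}G}\,k\,ds=\int_0^t\overline{e^{sA}G}\,k\,ds$, formula \eqref{OUnew} gives $X^{(k)}(t;x)=X^{(0)}(t;x)+\eta_k(t)$, so that for all $x\in H$ and $t>0$,
$$
\frac{P_t^{(k)}[\varphi](x)-\varphi(x)}{t}=\underbrace{\frac{\E\big[\varphi(X^{(0)}(t;x)+\eta_k(t))-\varphi(X^{(0)}(t;x))\big]}{t}}_{=:I_1(t,x)}+\underbrace{\frac{P_t^{(0)}[\varphi](x)-\varphi(x)}{t}}_{=:I_2(t,x)}.
$$
The term $I_2$ is harmless: since $\varphi\in\cald(\cala^{(0)})$, by definition $I_2(t,\cdot)\xrightarrow{\pi}\cala^{(0)}[\varphi]$ as $t\to 0^+$, which already supplies both the pointwise convergence $I_2(t,x)\to\cala^{(0)}[\varphi](x)$ and the uniform bound $\sup_{t\in(0,t_0]}|I_2(t,\cdot)|_\infty<\infty$ for $t_0$ small.

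The heart of the matter is $I_1$, where the class $\mathcal{S}^{A,G}(H)$ enters. I would fix $x$ and argue pathwise: for $\P$-a.e.\ $\omega$ the trajectory $z_\omega(\cdot):=X^{(0)}(\cdot;x)(\omega)$ is continuous with $z_\omega(0)=x$ (Proposition \ref{prop:SE} and continuity of the stochastic convolution), so the defining property of $\mathcal{S}^{A,G}(H)$ applied to $z_\omega$ gives
$$
\lim_{t\to 0^+}\frac{\varphi(z_\omega(t)+\eta_k(t))-\varphi(z_\omega(t))}{t}=\<D^G\varphi(x),k\>_K\qquad\text{for }\P\text{-a.e.\ }\omega.
$$
Thus the random integrand defining $I_1(t,x)$ converges pointwise in $\omega$ to the deterministic constant $\<D^G\varphi(x),k\>_K$, and the aim is to pass this limit under the expectation, obtaining $I_1(t,x)\to\<D^G\varphi(x),k\>_K$ for every $x$.

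The main obstacle is precisely this interchange of limit and expectation, together with the uniform-in-$x$ bound that $\pi$-convergence of $I_1$ also requires. Both would follow at once from a uniform difference-quotient estimate
$$
\sup_{y\in H,\ t\in(0,t_0]}\left|\frac{\varphi(y+\eta_k(t))-\varphi(y)}{t}\right|<\infty,\tag{$\star$}
$$
which furnishes a constant dominating function for the dominated convergence theorem and, simultaneously, bounds $|I_1(t,\cdot)|_\infty$. The delicate point is that $(\star)$ cannot be read off directly from $\varphi\in C^{1,G}_b(H)$: boundedness of $D^G\varphi$ controls increments only along directions of the form $Gk'$ with $k'\in\cald(G)$, via $|\varphi(y+Gk')-\varphi(y)|\le|D^G\varphi|_\infty|k'|_K$ (a consequence of the $G$-Gateaux formula \eqref{G-direc}), whereas the shift $\eta_k(t)=\int_0^t\overline{e^{sA}G}\,k\,ds$ need not belong to $\mathcal{R}(G)$ when $G$ is unbounded. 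This is exactly where the finer information carried by membership in $\mathcal{S}^{A,G}(H)$, and the sufficient conditions established in the Appendix, must be exploited to produce the required uniform control of the quotient. Once $(\star)$ is secured, I would combine the pointwise limits of $I_1$ and $I_2$ with their uniform sup-bounds to conclude $t^{-1}(P_t^{(k)}[\varphi]-\varphi)\xrightarrow{\pi}\psi$, whence $\varphi\in\cald(\cala^{(k)})$ and $\cala^{(k)}[\varphi]=\cala^{(0)}[\varphi]+\<D^G\varphi(\cdot),k\>_K$, as claimed.
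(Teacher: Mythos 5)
Your decomposition is exactly the paper's: split $t^{-1}\big(P_t^{(k)}[\varphi]-\varphi\big)$ into the cross term $I_1$ and the term $I_2$ handled by $\varphi\in\cald(\cala^{(0)})$, use $X^{(k)}(t;x)=X^{(0)}(t;x)+\int_0^t\overline{e^{sA}G}k\,ds$, apply the defining property of $\mathcal{S}^{A,G}(H)$ pathwise to $z_\omega=X^{(0)}(\cdot;x)(\omega)$, and pass the limit under the expectation. The one step you leave open --- the uniform difference-quotient bound ($\star$), needed both as a dominating function and for the sup-norm half of the $\pi$-convergence required by \eqref{eq:priola1}--\eqref{eq:priola2} --- is precisely the step the paper dispatches with a bare appeal to ``dominated convergence'', without exhibiting a dominating function. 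So you have not deviated from the paper's route; you have located its only delicate point.

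As written, though, your argument is incomplete, and the point is substantive: membership in $\mathcal{S}^{A,G}(H)$ gives, for each fixed continuous path, convergence (hence boundedness for small $t$) of the quotient along that path, but the resulting envelope $g(\omega)=\sup_{t\in(0,t_0]}\big|t^{-1}\big(\varphi(z_\omega(t)+\eta_k(t))-\varphi(z_\omega(t))\big)\big|$ depends on $\omega$ and its integrability is not automatic. The clean way to close the gap is the one you hint at: under Assumption \ref{ass:G} (the setting of Proposition \ref{lemma1}, which is how $\varphi\in\mathcal{S}^{A,G}(H)$ is verified in the applications) one has $\int_0^t\overline{e^{sA}G}k\,ds=Gk(t)$ with $k(t)\in\cald(G)$ and $k(t)/t\to\overline{G^{-1}G}k$, so the $G$-Gateaux formula \eqref{G-direc} gives
$\big|\varphi(y+Gk(t))-\varphi(y)\big|\le |D^G\varphi|_\infty\,|k(t)|_K\le Ct$
uniformly in $y\in H$ for small $t$; this is exactly ($\star$), and it simultaneously dominates the integrand and bounds $|I_1(t,\cdot)|_\infty$. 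Working only from the bare hypothesis $\varphi\in\mathcal{S}^{A,G}(H)$, some uniformity of this kind has to be supplied separately; neither your proposal nor, in fairness, the paper's own one-line invocation of dominated convergence makes it explicit.
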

\begin{proof}
%Now, assume first that $k\in \cald(G)$. Then
% \begin{equation}\label{stima1}
% \lim_{t\to 0^+} \frac{1}{t}\left(\int_0^t(e^{(t-s)A}Gk-Gk)ds	\right)=0.
% \end{equation}
Since $\varphi\in \cald(\cala^{(0)})$,  we can write for every $x\in H$
\begin{align*}
{\cala^{(k)}[\varphi](x)=}\lim_{t\to 0^+} \frac{P_t^{(k)}[\varphi](x)-\varphi(x)}{t}&= \lim_{t\to 0^+} \frac{P_t^{(k)}[\varphi](x)-P_t^{(0)}[\varphi](x)}{t}+\lim_{t\to 0^+} \frac{P_t^{(0)}[\varphi](x)-\varphi(x)}{t}\\
&= \lim_{t\to 0^+} \frac{\E\left[\varphi(X^{(k)}(t;x))-\varphi(X^{(0)}(t,x))\right]}{t}
+\cala^{(0)}[\varphi](x),
\end{align*}
if the last limit exists.
Observe that
 $$X^{(k)}(t;x)=X^{(0)}(t;x)+\int_0^t\overline{e^{(t-s)A}G}k\,ds= X^{(0)}(t;x)+\int_0^t\overline{e^{sA}G}k\,ds.$$ Therefore, since $\varphi\in\mathcal{S}^{A,G}(H)$,  continuity  of $t\mapsto \int_0^t\overline{e^{sA}G}k\, ds$ and by dominated convergence yield
\begin{align*}
& \lim_{t\to 0^+} \frac{\E\left[\varphi(X^{(k)}(t;x))-\varphi(X^{(0)}(t;x))\right]}{t}= \lim_{t\to 0^+}\E\left[\frac{\varphi\left(X^{(0)}(t;x)+
\dis\int_0^t\overline{e^{sA}G}k ds \right)-\varphi(X^{(0)}(t;x))}{t}\right]\\&
\E\left[\lim_{t\to 0^+}\frac{\varphi\left(X^{(0)}(t;x)+
\dis\int_0^t\overline{e^{sA}G}k ds\right)-\varphi(X^{(0)}(t;x))}{t}\right]
%&=\E\left[\lim_{t\to 0^+}\frac{\varphi\left(X^o(t;x)+tGL(u) \right)-\varphi(X^o(t,x))}{t}\right]\\
%&=\E\left[\lim_{t\to 0^+}\frac{\varphi\left(X^o(t;x)+tGL(u) \right)-\varphi(X^o(t,x)+tGL(u))}{t}\right]\\
%+\E\left[\lim_{t\to 0^+}\frac{\varphi\left(X^o(t;x)+tGL(u) %\right)-\varphi(X^o(t,x))}{t}\right]
= \big\langle D^G\varphi(x),k\big\rangle_K.
\end{align*}
The claim follows.
\end{proof}

{
\subsection{Proof of the generalized Dynkin's formula}\label{SSS:DYN2}
}
We introduce the linear space  $\calk^{s,p}$ of $K$-valued $p$-integrable c\`adl\`ag simple  processes. An element $\kappa(\cdot)\in \calk^{s,p}$ is of the form
\begin{equation}\label{kkkbis}
\kappa(t)=\sum_{i=1}^n {k_{i-1}}\textbf{1}_{[t_{i-1},t_i)}(t),
\end{equation}
for some $n\in \N$,  $0={t_0}<t_1<...<t_n=+\infty,$ and  $\{k_i\}_{i=0,...,{n-1}}$ such that  $k_i\in L^p(\Omega,\calf_{t_i},\P;K)$  for all $ i=0,...,{n-1}$. Processes in $\calk^{s,p}$ are progressively measurable.
{By arguing as in the proof of
%Lemma \ref{lemmaDP},
Proposition \ref{prop:SE} we get that,} for any
$\kappa(\cdot)\in \calk^{s,p}$, the process
$$
t\mapsto \int_0^t \overline{e^{(t-s)A} G}\kappa(s)\,ds
$$
is well defined, belongs to $\calm^{1,loc}_\calp(H)$ and has a version with continuous trajectories.
%for each $\kappa(\cdot)\in \calk^{s,p}$.
We will always refer to the version of this process (unique up to indistinguishability) having continuous trajectories.
Given $\kappa(\cdot)\in\calk^{s,p}$, we write
$$
X^{\kappa(\cdot)}(t,x):=e^{tA}x+ \int_0^t e^{(t-s)A}\sigma \,dW(s)+\int_0^t \overline{e^{(t-s)A} G}\kappa(s)\,ds.
$$
{Again arguing as in the proof of Proposition \ref{prop:SE} we see that this process has a version with having continuous trajectories. As above we will always refer to this version (unique up to indistinguishability).}

Recall that, if $V_1$, $V_2$ are two random variables with values, respectively, in two measurable spaces $(E_1,\cale_1)$ and $(E_2,\cale_2)$, a version of the conditional law of $V_1$ given $V_2$ is a family of probability measures $\big\{\mu(\cdot,v_2)\big\}_{v_2\in E_2}$ on $(E_1,\cale_1)$ such that, for every $f\in B_b(E_1\times E_2;\R)$, the map $v_2\mapsto \int_{E_1} f(v_1,v_2)\mu(dv_1,v_2)$ is measurable and
$$
\E[f({V_1,V_2})]= \int_{E_2}\nu(dv_2)\int_{E_1} f(v_1,v_2)\,\mu(dv_1,v_2),
$$
where $\nu=\textsl{Law}\,(V_2)$. This family, if it exists, is unique up to $\nu$-null measure sets.
\smallskip
\begin{Lemma}\label{lemma:imp}
Let $\kappa(\cdot)\in \calk^{s,p}$ be in the form \eqref{kkkbis} and $t\in [t_{i-1},t_{i})$ for some $i=1,...,n.$
A  version of the conditional law of $X^{\kappa(\cdot)}(t;x)$ given the couple $(X^{\kappa(\cdot)}(t_{i-1};x), k_{i-1})$ is the family
\begin{equation}\label{eqmutdef}
\mu_t(\cdot,x',k')\,{:}=\textsl{Law}\  (X^{(k')}(t-t_{i-1};x')).
\end{equation}
\end{Lemma}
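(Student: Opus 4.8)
The plan is to reduce the statement to the standard ``freezing'' conditioning lemma for a measurable function of two independent inputs. The essential structural fact is that, over the interval $[t_{i-1},t)$ on which $\kappa(\cdot)$ is frozen at the $\calf_{t_{i-1}}$-measurable value $k_{i-1}$, the process $X^{\kappa(\cdot)}$ obeys the same cocycle (flow) identity exploited in the proof of Proposition \ref{prop:der}(i). Concretely, using the semigroup property of $\{e^{tA}\}_{t\geq 0}$ together with \eqref{eq:semi1} and the fact that $\kappa(s)=k_{i-1}$ for $s\in[t_{i-1},t)$, I would write, for $t\in[t_{i-1},t_i)$ and $\tau:=t-t_{i-1}$,
$$X^{\kappa(\cdot)}(t;x)=e^{\tau A}X^{\kappa(\cdot)}(t_{i-1};x)+\int_{t_{i-1}}^t e^{(t-s)A}\sigma\,dW(s)+\int_{t_{i-1}}^t\overline{e^{(t-s)A}G}\,k_{i-1}\,ds.$$

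Next I would shift time to isolate the randomness that is independent of $\calf_{t_{i-1}}$. Setting $\tilde W(r):=W(t_{i-1}+r)-W(t_{i-1})$, which is a cylindrical Brownian motion independent of $\calf_{t_{i-1}}$, and changing variables $r=s-t_{i-1}$, the decomposition becomes
$$X^{\kappa(\cdot)}(t;x)=\Phi\big(X^{\kappa(\cdot)}(t_{i-1};x),\,k_{i-1},\,\tilde Z\big),\qquad \tilde Z:=\int_0^\tau e^{(\tau-r)A}\sigma\,d\tilde W(r),$$
where $\Phi:H\times K\times H\to H$ is the jointly measurable map $\Phi(x',k',z):=e^{\tau A}x'+z+\int_0^\tau\overline{e^{(\tau-r)A}G}\,k'\,dr$, which is continuous in $(x',k',z)$ and whose deterministic integral is well defined by Lemma \ref{lemmaDP}. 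The pair $(X^{\kappa(\cdot)}(t_{i-1};x),k_{i-1})$, being $\calf_{t_{i-1}}$-measurable, is independent of $\tilde Z$. Moreover $\tilde Z$ has the same law as $Z':=\int_0^\tau e^{(\tau-s)A}\sigma\,dW(s)$ (stationarity and independence of the increments of $W$), and by construction $X^{(k')}(\tau;x')=\Phi(x',k',Z')$, so that $\textsl{Law}(X^{(k')}(\tau;x'))=\textsl{Law}(\Phi(x',k',\tilde Z))$.

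With this representation the claim becomes an instance of conditioning a function of two independent quantities. For $f\in B_b(H\times(H\times K);\R)$ set $\psi_f(x',k'):=\E\big[f(\Phi(x',k',\tilde Z),x',k')\big]$. By independence of $\tilde Z$ from $(X^{\kappa(\cdot)}(t_{i-1};x),k_{i-1})$,
$$\E\big[f\big(X^{\kappa(\cdot)}(t;x),\,X^{\kappa(\cdot)}(t_{i-1};x),\,k_{i-1}\big)\big]=\E\big[\psi_f\big(X^{\kappa(\cdot)}(t_{i-1};x),\,k_{i-1}\big)\big],$$
which is precisely the integral identity characterizing the conditional law, since the law identity above gives $\psi_f(x',k')=\int_H f(v,x',k')\,\mu_t(dv,x',k')$. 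It then remains to verify the measurability requirement of the definition, namely that $\psi_f$ is Borel measurable: because $(x',k')\mapsto\Phi(x',k',\tilde Z(\omega))$ is continuous for each $\omega$, dominated convergence yields continuity of $\psi_f$ for $f\in C_b$, and a monotone-class argument extends measurability to all $f\in B_b$.

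The main obstacle I anticipate is the rigorous justification of the freezing step in the infinite-dimensional stochastic setting: one must confirm that $\tilde Z$ is genuinely independent of $\calf_{t_{i-1}}$ (relying on the independence of increments of the cylindrical Brownian motion and the deterministic nature of the convolution kernel $e^{(\tau-r)A}\sigma$), and that the conditioning identity holds in the exact form demanded by the definition of conditional law, including the joint measurability of $\psi_f$. The algebraic cocycle identity and the change of variables are routine; the delicate part is the probabilistic bookkeeping of independence and measurability.
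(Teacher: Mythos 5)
Your proposal is correct: the cocycle identity, the time-shift isolating the increment $\tilde W$ independent of $\calf_{t_{i-1}}$, the freezing argument for the conditional law, and the monotone-class step for measurability of $\psi_f$ together establish exactly the integral identity required by the paper's definition of conditional law. The paper itself omits the proof, referring to Krylov as ``standard''; what you have written is precisely that standard argument, so there is no divergence of method to report.
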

\begin{proof}
The proof is standard (see \cite[Ch.\,2,\,Sec.\,9]{Krylov80} in finite dimension and in a much more general setting) and we omit it for brevity.
\end{proof}

\begin{Lemma}\label{lemma:sem-spezz2}
Let $\varphi\in\mathcal{D}(\cala^{(0)})\cap \mathcal{S}^{A,G}(H)$  and  $\kappa(\cdot)\in \calk^{s,p}$.
Then
\begin{equation}\label{ddx}
\frac{d}{dt}\,\E\left[\varphi\big(X^{\kappa(\cdot)}(t;x)\big)\right]=
\E\left[\cala^{(0)}[\varphi]\big(X^{\kappa(\cdot)}(t;x)\big)+\left\langle \kappa(t),\,D^G\varphi\big(X^{\kappa(\cdot)}(t;x)\big)\right\rangle_K\right], \ \ \ \forall t\geq 0,
\end{equation}
where the derivative has to be intended as right derivative at the times $\{t_1,...,t_n\}$,  where the simple process {$\kappa(\cdot)$} jumps.
\end{Lemma}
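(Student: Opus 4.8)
The plan is to exploit the piecewise-constant structure of $\kappa(\cdot)$ together with the conditional-law identity of Lemma \ref{lemma:imp}, reducing the statement on each interval of constancy to a differentiation of the transition semigroup $P^{(k)}$, for which Proposition \ref{prop:der}(iii) and the splitting formula of Proposition \ref{lemma:sem-spezz} are exactly the tools at hand. I would fix $i\in\{1,\dots,n\}$ and work on $[t_{i-1},t_i)$, where $\kappa(t)=k_{i-1}$ with $k_{i-1}\in L^p(\Omega,\mathcal{F}_{t_{i-1}},\P;K)$. Since the intervals $[t_{i-1},t_i)$ partition $[0,+\infty)$ and the right derivative at a jump time $t_{i-1}$ is computed within the interval starting there, it suffices to treat $t\in[t_{i-1},t_i)$; the endpoint case $t=t_{i-1}$ is covered by the $t=0$ (right-derivative) instance of Proposition \ref{prop:der}(iii).

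For $t\in[t_{i-1},t_i)$, Lemma \ref{lemma:imp} gives that a version of the conditional law of $X^{\kappa(\cdot)}(t;x)$ given $(X^{\kappa(\cdot)}(t_{i-1};x),k_{i-1})=(x',k')$ is $\textsl{Law}\,(X^{(k')}(t-t_{i-1};x'))$. Applying the defining property of conditional law to $f(v_1)=\varphi(v_1)\in B_b(H)$, the inner integral is $P^{(k')}_{t-t_{i-1}}[\varphi](x')$ and, integrating against $\nu=\textsl{Law}\,(X^{\kappa(\cdot)}(t_{i-1};x),k_{i-1})$, I would write
\[
\E\big[\varphi(X^{\kappa(\cdot)}(t;x))\big]=\E\Big[\,\big(P^{(k')}_{t-t_{i-1}}[\varphi](x')\big)\big|_{(x',k')=(X^{\kappa(\cdot)}(t_{i-1};x),\,k_{i-1})}\Big].
\]
I would then differentiate in $t$ under the outer expectation: for each fixed outcome, Proposition \ref{prop:der}(iii) yields $\tfrac{d}{dt}P^{(k')}_{t-t_{i-1}}[\varphi](x')=P^{(k')}_{t-t_{i-1}}\big[\cala^{(k')}[\varphi]\big](x')$, and Proposition \ref{lemma:sem-spezz} (applicable since $\varphi\in\mathcal{D}(\cala^{(0)})\cap\mathcal{S}^{A,G}(H)$) rewrites this as $P^{(k')}_{t-t_{i-1}}\big[\cala^{(0)}[\varphi]+\langle D^G\varphi(\cdot),k'\rangle_K\big](x')$. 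Converting back through Lemma \ref{lemma:imp}, applied once to the bounded integrand $\cala^{(0)}[\varphi]$ and once to $\langle D^G\varphi(\cdot),k'\rangle_K$, I would recover $\E\big[\cala^{(0)}[\varphi](X^{\kappa(\cdot)}(t;x))+\langle k_{i-1},D^G\varphi(X^{\kappa(\cdot)}(t;x))\rangle_K\big]$, which is exactly the right-hand side of \eqref{ddx} because $\kappa(t)=k_{i-1}$ on $[t_{i-1},t_i)$.

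The step I expect to be the main obstacle is the passage of the derivative inside the expectation, together with the matching extension of the conditional-law identity beyond bounded integrands; both are complicated by the unboundedness of $k_{i-1}$. The quantitative input that resolves this is that $\cala^{(0)}[\varphi]\in C_b(H)$ and $D^G\varphi\in C_b(H,K)$ are bounded while $k_{i-1}\in L^p\subseteq L^1$ (as $p>1$ on a probability space). Indeed, by Proposition \ref{lemma:sem-spezz} one has $|\cala^{(k')}[\varphi]|_\infty\le |\cala^{(0)}[\varphi]|_\infty+|D^G\varphi|_\infty|k'|_K$, so the $t$-derivative of the integrand is dominated, uniformly in $t$, by the integrable random variable $|\cala^{(0)}[\varphi]|_\infty+|D^G\varphi|_\infty|k_{i-1}|_K$; a mean-value-theorem estimate combined with dominated convergence then legitimizes differentiation under $\E$. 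The same bound shows $\langle D^G\varphi(\cdot),k'\rangle_K$ is $\textsl{Law}\,(X^{\kappa(\cdot)}(t;x),X^{\kappa(\cdot)}(t_{i-1};x),k_{i-1})$-integrable, so the conditional-law identity — valid a priori only for $f\in B_b$ — extends to this integrand by truncating $k'$ and passing to the limit via dominated convergence, which justifies the final conversion.
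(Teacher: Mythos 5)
Your proposal follows essentially the same route as the paper's proof: disintegrate $\E\big[\varphi(X^{\kappa(\cdot)}(t;x))\big]$ via the conditional-law identity of Lemma \ref{lemma:imp}, differentiate under the integral using Proposition \ref{prop:der}(iii), split the generator via Proposition \ref{lemma:sem-spezz}, and convert back. If anything, you are more careful than the paper on the domination step (the paper simply asserts boundedness of $(t,x')\mapsto P^{(k')}_{t-t_{i-1}}[\cala^{(k')}[\varphi]](x')$, whereas your bound $|\cala^{(k')}[\varphi]|_\infty\le|\cala^{(0)}[\varphi]|_\infty+|D^G\varphi|_\infty|k'|_K$ together with $k_{i-1}\in L^p\subseteq L^1$ is the honest justification), so the argument is correct and matches the paper.
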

\begin{proof}
%Let
%$$k(t)=\sum_{i=1}^n k_{i-1}\textbf{1}_{[t_{i-1},t_i)}(t), \ \ \ t\geq 0,$$
%for some $n\in \N$,  $0={t_0}<t_1<...<t_n,$ and  $\{k_i\}_{i=0,...,n}$ such that $k_i\in L^p(\Omega,\calf_{t_i},\P;K)$  for all $ i=0,...,n$.
Let $\kappa(\cdot)\in \calk^{s,p}$ be as in  \eqref{kkkbis},  $t\in [t_{i-1},t_{i})$ for some $i=1,...,n$, and  
 $\varphi\in \cald(\cala^{(0)})\cap C^{1,G}_b(H)$.
%and let $K(\cdot)\in \calk^s$.
Denote by $\nu$ the law of the couple  $(X^{\kappa(\cdot)}(t_{i-1};x), k_{i-1})$.
By Lemma \ref{lemma:imp}, we have
%\footnote{The derivative is intended as right-derivative at $t=t_{i-1}$. The condition allowing differentiation under integral sign is verified a posteriori, as the integrand in the right hand side is seen to be bounded at the end.}
\begin{align*}
\E\left[\varphi\big(X^{\kappa(\cdot)}(t;x)\big)\right]
&=\int_{H\times K} \nu(dx',dk')\int_H\varphi(\xi)\mu_t(d\xi;x',k')
\\
&= \int_{H\times K} \nu(dx',dk')\;\E\left[\varphi \big(X^{(k')}(t-t_{i-1};x')\big)\right]
\\
&=\int_{H\times K}  \nu(dx',dk')P_{t-t_{i-1}}^{(k')}[\varphi](x')
\end{align*}
Now we differentiate under the integral sign using the fact that, by Proposition \ref{lemma:sem-spezz}, $\varphi \in \cald(\cala^{(k')})$ and the fact that
$(t,x')\mapsto P_{t-t_{i-1}}^{(k')}[\cala^{(k')}[\varphi]](x')$ is bounded over $[t_{i-1},t_i)\times H$.
Then, using Proposition \ref{prop:der}(i) and \myref{eq4:Aksplit}, we get
\begin{align*}
\frac{d}{dt}\E\left[\varphi\big(X^{\kappa(\cdot)}(t;x)\big)\right]
&=\int_{H\times K}  \nu(dx',dk')\,\frac{d}{dt} P_{t-t_{i-1}}^{(k')}[\varphi](x')\\
&=\int_{H\times K}  \nu(dx',dk')\,\, P_{t-t_{i-1}}^{(k')}[\cala^{(k')}[\varphi]](x')
\\
&=\int_{H\times K}  \nu(dx',dk') \;\; \E\left[\cala^{(k')}[\varphi](X^{(k')}(t-t_{i-1});x)\right]
\\
&= \int_{H\times K}  \nu(dx',dk')\,\,
\int_H\cala^{(k')}[\varphi](\xi)\mu_t(d\xi;x',k')
\\
&= \E\left[\cala^{(k_{i-1})}[\varphi]\big(X^{\kappa(\cdot)}(t;{x})\big)\right]
\\
&=\E\left[\cala^{(0)}[\varphi]\big(X^{\kappa(\cdot)}(t;x)\big)+\left\langle k(t),\,D^G\varphi(X^{\kappa(\cdot)}(t;x)\big)\right\rangle_K\right],
\end{align*}
%\begin{align*}
%\frac{d}{dt}\E\left[\varphi(X^{K(\cdot)}(t;x)\right]
%&= \frac{d}{dt}\int_{H\times K} \nu(dx',dk)\int_H\varphi(\xi)\mu_t(d\xi;x',k)\\
%%&{=}\E\left[\frac{d}{dt}\varphi(X(t;x,u(\cdot))\right]\\
%&= \int_{H\times K} \nu(dx',dk)\frac{d}{dt}\int_H\varphi(\xi)\mu_t(d\xi;x',k)\\
%&=\int_{H\times K}  \nu(dx',dk) \frac{d}{dt}P_t^{(k)}[\varphi](x')\\
%&=\int_{H\times K}  \nu(dx',dk) P_t[\cala^{(k)}[\varphi]](x')\\
%&=\int_{H\times U}  \nu(dx',dk) \E\left[\cala^{(k)}[\varphi](X^{(k)}(t-t_i);x)\right]\\
%&= \E\left[\cala^{(K_{i-1})}[\varphi](X^{K(\cdot)}(t;x))\right]
%\\
%&=\E\left[\cala^{(0)}[\varphi](X^{K(\cdot)}(t;x)+\langle K(t),D^G\varphi(X^{K(\cdot)}(t;x)\rangle_K\right],
%\end{align*}
the claim.
\end{proof}

\color{black}

\begin{Lemma}\label{lemma:conv}
For each $u(\cdot)\in \calu$ and $T>0$, there exists a sequence $\{\kappa_n\}_{n\in\N}\subset \calk^{s,p}$ such that
$$
\kappa_n|_{[0,T]\times\Omega}\ \stackrel{\calm^{p,T}_\calp(H)}{\longrightarrow}\  L(u(\cdot))|_{[0,T]\times\Omega},
\ \ \
X^{\kappa_n(\cdot)}(\cdot;x)|_{[0,T]\times\Omega}\ \stackrel{\calm^{1,T}_\calp(H)}{\longrightarrow} \ X(\cdot;x,u(\cdot))|_{[0,T]\times\Omega}.
$$
\end{Lemma}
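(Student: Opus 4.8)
The plan is to establish the two convergences in sequence. First I produce the approximating sequence by approximating the $K$-valued drift coefficient $L(u(\cdot))$ by elementary processes in $\calk^{s,p}$; then I transfer this convergence to the state processes, exploiting that $X^{\kappa(\cdot)}$ and $X(\cdot;x,u(\cdot))$ differ only through the linear convolution drift term. To start, note that $L(u(\cdot))|_{[0,T]\times\Omega}\in\calm^{p,T}_\calp(K)$: by Assumption \ref{hp4:ABQforOU}(iv) we have $|L(u(s))|_K\le C_L(1+|u(s)|_U)$, and since $u(\cdot)\in\calu$ the integral $\int_0^T\E[|u(s)|_U^p]\,ds$ is finite, whence $\int_0^T\E[|L(u(s))|_K^p]\,ds<\infty$. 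It thus suffices to show that $\calk^{s,p}$ is dense in $\calm^{p,T}_\calp(K)$, which is the classical density of elementary processes underlying the It\^o integral. I would carry it out in the standard reductions: \emph{(a)} truncate to reduce to bounded progressively measurable integrands; \emph{(b)} regularize in time through the adapted backward averages $Y_\eps(t):=\frac1\eps\int_{(t-\eps)\vee 0}^t Y(s)\,ds$, which are continuous in $t$, adapted, and converge to $Y$ in $\calm^{p,T}_\calp(K)$ as $\eps\to 0^+$; \emph{(c)} sample such a continuous adapted process at the left endpoints of a vanishing-mesh partition $0=t_0<\cdots<t_m=T$, setting $\kappa(t):=\sum_{i=1}^m Y_\eps(t_{i-1})\mathbf{1}_{[t_{i-1},t_i)}(t)$ and extending it by $0$ on $[T,+\infty)$. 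Each value $Y_\eps(t_{i-1})$ is $\calf_{t_{i-1}}$-measurable and belongs to $L^p(\Omega;K)$, so $\kappa\in\calk^{s,p}$; by continuity of $t\mapsto Y_\eps(t)$ and dominated convergence (with dominating function $\frac1\eps\int_0^T|Y(s)|_K\,ds\in L^p(\Omega)$) the sampled processes converge to $Y_\eps$ in $\calm^{p,T}_\calp(K)$. A diagonal extraction over \emph{(a)}--\emph{(c)} yields $\{\kappa_n\}\subset\calk^{s,p}$ with $\kappa_n|_{[0,T]\times\Omega}\to L(u(\cdot))|_{[0,T]\times\Omega}$ in $\calm^{p,T}_\calp(K)$, the first claimed convergence.

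For the state processes, comparing \eqref{OU} with the definition of $X^{\kappa(\cdot)}$ shows that the deterministic term $e^{tA}x$ and the stochastic convolution cancel, leaving, with $\Delta_n(s):=\kappa_n(s)-L(u(s))$,
$$X^{\kappa_n(\cdot)}(t;x)-X(t;x,u(\cdot))=\int_0^t\overline{e^{(t-s)A}G}\,\Delta_n(s)\,ds.$$
Using \eqref{esg} and taking expectations,
$$\E\big[|X^{\kappa_n(\cdot)}(t;x)-X(t;x,u(\cdot))|_H\big]\le C_G\int_0^t\big((t-s)^{-\beta}\vee 1\big)e^{a_G(t-s)}\,\E[|\Delta_n(s)|_K]\,ds.$$
Integrating in $t$ over $[0,T]$ and applying Fubini, the inner kernel integral $\int_0^{T}(r^{-\beta}\vee 1)e^{a_G r}\,dr$ is finite because $\beta\in[0,1)$, so there is a constant $C_{G,T}$ (depending only on $C_G,a_G,\beta,T$) with $|X^{\kappa_n(\cdot)}(\cdot;x)-X(\cdot;x,u(\cdot))|_{\calm^{1,T}_\calp(H)}\le C_{G,T}\,|\Delta_n|_{\calm^{1,T}_\calp(K)}$. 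Finally H\"older's inequality gives $|\Delta_n|_{\calm^{1,T}_\calp(K)}\le T^{1-1/p}\,|\Delta_n|_{\calm^{p,T}_\calp(K)}\to 0$ by the first step, delivering the second convergence.

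I expect the only delicate point to be the density in the first step, namely producing approximants of the precise elementary form demanded by $\calk^{s,p}$ (piecewise constant with $\calf_{t_{i-1}}$-measurable values) while respecting progressive measurability. The time-averaging in \emph{(b)} is the crucial device, since it simultaneously preserves adaptedness and yields genuine $\calm^{p,T}_\calp(K)$-approximation; once continuous adapted integrands are available, the left-endpoint sampling in \emph{(c)} is routine. By contrast, the second step is a soft consequence of the linear structure of the mild solution and of the integrability of the kernel built into \eqref{esg}, and presents no real difficulty.
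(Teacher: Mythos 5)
Your proposal is correct and takes essentially the same route as the paper: the paper obtains the first convergence by citing the classical density of simple processes (Karatzas--Shreve, Ch.\,III, Lemma 2.4), which you reconstruct via truncation, adapted backward time-averaging and left-endpoint sampling, and then transfers the convergence to the state processes through the convolution drift term. Your explicit Fubini/H\"older estimate for the second step is if anything more careful than the paper's bare appeal to dominated convergence, and you correctly work in $\calm^{p,T}_\calp(K)$ where the statement as printed writes $\calm^{p,T}_\calp(H)$.
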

\begin{proof} Fix $T>0$ and set $\kappa(\cdot):=L(u(\cdot))$.
By standard arguments (see, e.g., \cite[Ch.\,III,\,Lemma.\,2.4,\,p.132]{KaratzasShreve88})\footnote{It is worth to point out some differences. First, we are dealing with c\`adl\`ag approximations (as it is more meaningful and natural to state Proposition \ref{lemma:sem-spezz2}) rather than with c\`agl\`ad (as in \cite[Ch.\,III,\,Lemma.\,2.4,\,p.132]{KaratzasShreve88}): this is not a problem as, from the point of view of integration, these classes coincide. Second, we are dealing with Hilbert-valued processes: therefore, more technical care is needed as the approximation is produced by Bochner integration.}, we can construct a sequence $\{\kappa_n\}_{n\in\N}\subset \calk^{s,p}$ such that
$$\kappa_n|_{[0,T]\times\Omega}\ \stackrel{\calm^{p,T}_\calp(H)}{\longrightarrow}  \ \kappa(\cdot)|_{[0,T]\times\Omega}.$$
%$$
%\int_0^T \E[|k_n(s)-k(s)|^p_K]ds \to 0.
%$$
{Then, using the expression \eqref{OU} for the state variable,
%using Lemma \ref{lemmaDP},
the convergence
$$ X^{\kappa_n(\cdot)}(\cdot;x)|_{[0,T]\times\Omega}\ \stackrel{\calm^{1,T}_\calp(H)}{\longrightarrow}  \ X(\cdot;x,u(\cdot))|_{[0,T]\times\Omega}$$ follows by simply applying dominated convergence.}
%$$
%\int_0^t\overline{e^{sA}G}k_n(s)ds \stackrel{L^p(\P)}{\longrightarrow} \int_0^t\overline{e^{sA}G}k(s)ds= \int_0^t\overline{e^{sA}G}L(u(s))ds, \ \ \forall t \in [0,T],
%$$
%and the claim follows.
\end{proof}

%In the following, by $\frac{d^+}{dt}$ we will denote the right derivative of a real function defined on $[0,+\infty)$.

\begin{Theorem}[Dynkin's formula]\label{teo:dynkin}
Let $\varphi\in\mathcal{D}(\cala^{(0)})\cap \mathcal{S}^{A,G}(H)$. Then, for every $\lambda>0$, $T>0$, and $u(\cdot)\in \calu$, we have
\begin{align}\label{Dynkin}
&\E\left[e^{-\lambda T}\varphi\big(X(T;x,u(\cdot))\big)\right]\\&=
\varphi(x)+
\E\left[\int_0^T e^{-\lambda t}
\left[(\cala^{(0)}-\lambda)[\varphi ]\big(X(t;x,u(\cdot))\big)
+\< L(u(t)),\,D^G\varphi(X(t;x,u(\cdot)))\>_K\right]dt\right].\nonumber
\end{align}
\end{Theorem}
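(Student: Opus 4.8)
The plan is to prove \eqref{Dynkin} first for the piecewise-constant controls $\kappa(\cdot)\in\calk^{s,p}$, where Lemma \ref{lemma:sem-spezz2} applies directly, and then to recover the general case $u(\cdot)\in\calu$ by the approximation provided by Lemma \ref{lemma:conv}.

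\emph{Step 1 (simple processes).} Fix $\kappa(\cdot)\in\calk^{s,p}$ and set $\psi(t):=\E[\varphi(X^{\kappa(\cdot)}(t;x))]$. Since $X^{\kappa(\cdot)}$ is $1$-mean continuous and $\varphi\in C_b(H)$, the map $\psi$ is continuous on $[0,+\infty)$; by Lemma \ref{lemma:sem-spezz2} it admits at every $t$ a right derivative equal to $\E[\cala^{(0)}[\varphi](X^{\kappa(\cdot)}(t;x))+\langle \kappa(t),D^G\varphi(X^{\kappa(\cdot)}(t;x))\rangle_K]$, which off the finitely many jump times of $\kappa(\cdot)$ is a genuine continuous derivative. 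Hence $\psi$ is absolutely continuous and the fundamental theorem of calculus applies. Writing $\Psi(t):=e^{-\lambda t}\psi(t)$, the product rule gives $\Psi'(t)=e^{-\lambda t}\E[(\cala^{(0)}-\lambda)[\varphi](X^{\kappa(\cdot)}(t;x))+\langle \kappa(t),D^G\varphi(X^{\kappa(\cdot)}(t;x))\rangle_K]$; integrating over $[0,T]$, using $\psi(0)=\varphi(x)$, and swapping $\E$ with $\int_0^T dt$ by Fubini's theorem (legitimate because $\cala^{(0)}[\varphi]$ and $D^G\varphi$ are bounded and $\kappa(\cdot)\in L^p$), we obtain \eqref{Dynkin} with $L(u(\cdot))$ replaced by $\kappa(\cdot)$.

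\emph{Step 2 (passage to the limit, the easy terms).} Given $u(\cdot)\in\calu$ and $T>0$, let $\{\kappa_n\}$ be as in Lemma \ref{lemma:conv}, so that $\kappa_n\to L(u(\cdot))$ in $\calm^{p,T}_\calp(H)$ and $X^{\kappa_n(\cdot)}\to X(\cdot;x,u(\cdot))$ in $\calm^{1,T}_\calp(H)$. For the left-hand side, the convolution estimate of Lemma \ref{lemmaDP}, together with H\"older's inequality and the fact that $\beta p/(p-1)<1$, yields $X^{\kappa_n(\cdot)}(T;x)\to X(T;x,u(\cdot))$ in $L^1(\Omega;H)$, hence in probability; since $\varphi$ is bounded and continuous, bounded convergence gives $\E[\varphi(X^{\kappa_n(\cdot)}(T;x))]\to\E[\varphi(X(T;x,u(\cdot)))]$. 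The term involving $(\cala^{(0)}-\lambda)[\varphi]$, which is bounded and continuous, passes to the limit by dominated convergence after extracting, along any subsequence, a further subsequence with $X^{\kappa_n(\cdot)}\to X$ a.e.\ on $[0,T]\times\Omega$; the usual subsequence principle then upgrades this to convergence of the full sequence.

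\emph{Step 3 (the main difficulty).} The delicate term is $\E\int_0^T e^{-\lambda t}\langle \kappa_n(t),D^G\varphi(X^{\kappa_n(\cdot)}(t))\rangle_K\,dt$, because $\kappa_n$ is only bounded in $L^p$ (not pathwise) while $X^{\kappa_n(\cdot)}$ converges only a.e. I would split $\langle \kappa_n,D^G\varphi(X^{\kappa_n(\cdot)})\rangle_K=\langle \kappa_n-L(u(\cdot)),D^G\varphi(X)\rangle_K+\langle L(u(\cdot)),D^G\varphi(X)\rangle_K+\langle \kappa_n,D^G\varphi(X^{\kappa_n(\cdot)})-D^G\varphi(X)\rangle_K$. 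The first summand vanishes in the limit since $D^G\varphi$ is bounded and $\kappa_n\to L(u(\cdot))$ in $\calm^{1,T}_\calp(H)$ (finiteness of $[0,T]$ gives $\calm^{p,T}_\calp\hookrightarrow\calm^{1,T}_\calp$); the second is exactly the target term. The third, the real obstacle, I would control by H\"older's inequality with exponents $p$ and $p/(p-1)$: the factor $(\E\int_0^T|\kappa_n(t)|_K^p\,dt)^{1/p}$ is bounded uniformly in $n$, while $|D^G\varphi(X^{\kappa_n(\cdot)})-D^G\varphi(X)|_K^{p/(p-1)}$ is dominated by $(2|D^G\varphi|_\infty)^{p/(p-1)}$ and tends to $0$ a.e.\ along the chosen subsequence, so dominated convergence forces the second factor to $0$. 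Invoking once more the subsequence principle yields convergence of the full sequence, and assembling the three limits gives \eqref{Dynkin}.
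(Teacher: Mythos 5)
Your proof is correct and follows essentially the same route as the paper's: establish the formula for simple controls via Lemma \ref{lemma:sem-spezz2} and the fundamental theorem of calculus, then pass to the limit along the approximating sequence of Lemma \ref{lemma:conv}. Your Step 3 merely spells out, via the three-term splitting and H\"older's inequality, the limit passage for the term $\langle \kappa_n(t),D^G\varphi(X^{\kappa_n(\cdot)}(t;x))\rangle_K$ that the paper compresses into the phrase ``by dominated convergence \ldots observing that $\varphi$, $D^G\varphi$, and $\cala^{(0)}[\varphi]$ are bounded''.
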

\begin{proof}
Let $u(\cdot)\in \calu$ and
take the approximating sequence $\left\{\kappa_n\right\}_{n\in\N}$ provided by Lemma \ref{lemma:conv}. Then, applying, for each $n\in \N$, Lemma \ref{lemma:sem-spezz2},
we obtain from \myref{ddx} (by taking the right derivatives at  $t_i$), for all $t \ge 0$ and $\lambda >0$,
%\begin{equation}\label{ddxn}
%\frac{d}{dt}\E\left[\varphi(X^{K_n(\cdot)}(t;x)\right]=
%\E\left[\cala^{(0)}[\varphi](X^{K_n(\cdot)}(t;x)+\langle K_n(t),D^G\varphi(X^{K_n(\cdot)}(t;x)\rangle_K\right], \ \ \ \forall t\geq 0,
%\end{equation}
%Then, by straightforward computations, we get, for every $\lambda >0$,
\begin{align}\label{ddxnlambda}
&\frac{d}{dt}e^{-\lambda t}\E\left[\varphi\big(X^{\kappa_n(\cdot)}(t;x)\big)\right]\\&=
-\lambda e^{-\lambda t}\E\left[\varphi\big(X^{\kappa_n(\cdot)}(t;x)\big)\right]+
e^{-\lambda t}\E\left[\cala^{(0)}[\varphi]\big(X^{\kappa_n(\cdot)}(t;x)\big)+\left\langle \kappa_n(t),\,D^G\varphi\big(X^{\kappa_n(\cdot)}(t;x)\big)\right\rangle_K\right].\nonumber
%\ \ \ \forall t\geq 0,
\end{align}
{Since the function $t\mapsto \E\left[e^{-\lambda t}\varphi\big(X^{\kappa_n(\cdot)}(t;x)\big)\right]$ is everywhere continuous and stepwise differentiable, we can apply the Fundamental Theorem of Calculus.} So,
integrating on $[0,T]$, we get
\begin{align*}
&\E\left[e^{-\lambda T}\varphi\big(X^{\kappa_n(\cdot)}(T;x)\big)\right]\\&=\varphi(x)+\E\left[\int_0^T e^{-\lambda t}\bigg((\cala^{(0)}-\lambda)[\varphi ]\big(X^{\kappa_n(\cdot)}(t;x)\big)+\left\langle \kappa_n(t), \,D^G\varphi\big(X^{\kappa_n(\cdot)}(t;x)\big)\right\rangle_K\bigg)\,dt\right].
\end{align*}
Now, letting $n\to + \infty$, we get the claim by dominated convergence from Lemma \ref{lemma:conv}, observing that
 $\varphi$, $D^G\varphi$, and $\cala^{(0)}[\varphi]$  are bounded.
%It follows by applying Proposition \ref{lemma:sem-spezz2} to the approximating sequence $K_n$ provided by Lemma \ref{lemma:conv} and then passing the equality to the limit by dominated convergence (all terms are indeed bounded).
%%For processes  $u(\cdot)\in \calu^s$ the formula is aded  consequence of Lemma \ref{lemma:sem-spezz2}. For general processes $u(\cdot)\in\calu$, one approximate them by employing Lemma \ref{lemma:conv} and passes to the limit the already proved formula for simple processes.
\end{proof}

\begin{Remark}
\label{rm:polgrowth}
The results of this section, in particular Theorem \ref{teo:dynkin},
can be extended, at the price of straightforward technical complications, to the case when the basic space of functions is, instead of
$C_b(H)$, the space $C_m(H)$, where $m>0$, used e.g. in \cite{FedericoGozzi16}:
\begin{equation}\label{eq:defCm}
C_m(H):=\left\{\phi:H\to \R \ \mbox{continuous}: \  \sup_{x \in H}\frac{|\phi(x)|}{1+|x|^m}<\infty \right\}.
\end{equation}
Also the results of next Section 5 can be extended to this setting
covering more general cases, in particular when the current cost of the control problem has polynomial growth in $x$. We do not do this here for brevity.
\end{Remark}

\color{black}

\section{HJB equation, verification theorem and optimal feedbacks}\label{SS:HJB}
By standard Dynamic Programming arguments, one formally associates to
{the control problem of Section 3 the following HJB equation for the value function \eqref{eq4:VF1ell}:}
\begin{equation}
\label{eq4:HJBcontrolell}
%\left\{ \begin{array}{l}
\displaystyle{ \lambda v(x) -\frac{1}{2}\;
\mbox{\rm Tr}\;[QD^2v(x)] -\< Ax,Dv(x)\>_H- F(x,Dv(x))=0,
\quad x \in H,}
%\\[1mm]
%\displaystyle{\qquad \qquad \qquad \quad
%\; t \in [0,T), \; x \in H}
%\\[1mm]
%v(T,x)=g (x), \; x \in H,
%\end{array} \right.
\end{equation}
where $Q=\sigma\sigma^*$ and the Hamiltonian $F$ is defined by
\begin{equation}
\label{eq4:Hamiltoniancontrolell}
	F(x,p) \coloneqq \inf_{u \in \Lambda}\,\, F_{CV} (x,p;u), \ \ \ \ \  \ x\in H, \ p\in  H,
\end{equation}
where
\begin{equation}\label{FCV}
F_{CV} (x,p;u)
	\coloneqq
 \big\langle GL(u),p \big\rangle_H +l(x,u), \ \ \ \ \ \ x\in H, \ u\in \Lambda, \ p\in H.
\end{equation}
{Note that this definition is only formal as $GL(u)$ {may be not defined, {since} $L(u)$ may not belong to $\mathcal{D}(G)$}.
It is then} convenient 
to introduce the modified Hamiltonian
\begin{equation}
\label{eq4:F0controlell}
	F_0(x,q) \coloneqq \inf_{u \in \Lambda}\,\,F_{0,CV} (x,q;u), \ \ \ \ \ \  x\in H, \ q\in K,
\end{equation}
where
\begin{equation}\label{F0CV}
F_{0,CV} (x,q;u)\coloneqq
%\inf_{u \in U}\left\{
  \big\langle L(u),q \big\rangle_K +l(x,u),  \ \ \ \ \ \ x\in H, \ u\in \Lambda, \ q\in K.
\end{equation}
Observing that
$$
F(x,p)=F_0(x,G^*p), \ \ \ \forall p\in \cald(G^*),
$$
 \eqref{eq4:HJBcontrolell} can be formally rewritten as
\begin{equation}
\label{eq4:HJBcontrolellbis}
%\left\{ \begin{array}{l}
\displaystyle{ \lambda v(x) -\frac{1}{2}\;
\mbox{\rm Tr}\;[QD^2v(x)] -\< Ax,Dv(x)\>_H- F_0(x,D^Gv(x))=0,
\quad x \in H}.
%\\[1mm]
%\displaystyle{\qquad \qquad \qquad \quad
%\; t \in [0,T), \; x \in H}
%\\[1mm]
%v(T,x)=g (x), \; x \in H,
%\end{array} \right.
\end{equation}
Note that, in principle, $F_0$ may take the value $-\infty$ somewhere.
The concept of mild solution to \eqref{eq4:HJBcontrolell} relies on Proposition \ref{prop:der}(ii) and on \eqref{aaa}, inspiring an integral form of   \eqref{eq4:HJBcontrolellbis} through the use of the semigroup $\big\{P_s^{(0)}\big\}_{s\geq 0}$. 
\begin{Definition}
\label{df4:solmildHJBell} We say that a
function $v:H\to\R$ is a mild solution to \eqref{eq4:HJBcontrolellbis}
if it belongs to $C _{b}^{1,G}\left(H\right)$, {$F_0(\cdot,D^{G}v\left(\cdot\right))$
is bounded  and} $v$ solves the integral equation
\begin{equation}
v(x)= \int_0^{\infty }e^{-\lambda s}
P_s^{(0)} \left[  F_0(\cdot,D^{G}v\left(\cdot\right))\right](x)\,ds, \ \ \ \forall x\in H.
\label{eq4:solmildHJBell}
\end{equation}
\end{Definition}
\begin{Remark}\label{rem:exist}
The problem of existence and uniqueness of mild solutions for equations in the form \eqref{eq4:HJBcontrolellbis} is addressed in \cite{FedericoGozzi16} and in  \cite[Ch.\,4]{FGSbook}. In particular, existence and uniqueness of mild solutions is stated  for sufficiently large $\lambda>0$, under the following assumptions (see \cite[Cor.\,4.12, Th.\,3.8(ii)]{FedericoGozzi16} {with $m=0$}):
\begin{itemize}
\item[(A1)] $\overline{e^{tA}G}(K)\subseteq Q_{t}^{1/2}(H)$ for every $t > 0$, where $Q_t:=\dis\int_0^t  e^{sA}\sigma\sigma^{\ast}e^{sA^{\ast}}ds$.
\item[(A2)]  The operators\footnote{Here $Q_{t}^{-1/2}$ is the pseudo-inverse of $Q_{t}^{1/2}$.}
\begin{equation*}\label{eq4:defGammaG}
\Gamma_G(t):K\to H, \  \  \ \Gamma_G(t)\coloneqq Q_{t}^{-1/2}\overline{e^{tA}G}, \ \ t\geq 0,
\end{equation*}
which are well defined by (A1) and bounded by the closed graph theorem, are such that the map $t\mapsto |\Gamma_G(t)|_{\call(K,H)}$ belongs to $L^1_{loc}([0,+\infty),\R)$ and is bounded in a neighborhood of $+\infty$.
\item[(A3)] The Hamiltonian $F_0$ satisfies, for suitable $C_{F_0}>0$.
$$
\big|F_0(x,q_1)-F_0(x,q_2)\big| \le C_{F_0}|q_1-q_2|_K, \qquad \forall x\in H, \ \ \forall q_1,q_2 \in K,
$$
$$
\big|F_0(x,q)\big| \le C_{F_0}\big(1+|q|_K\big), \qquad \forall x\in H,\ \ \forall q \in K.
$$
\end{itemize}
{Some results in the case of locally Lipschitz Hamiltonian are available, up to now, only in special cases (see \cite[Sec.\,13.3.1]{DaPratoZabczyk14} and \cite{Cerrai01-40}).}
\end{Remark}

Due to Proposition \ref{prop:der}(ii), a mild solution $v$ of \eqref{eq4:HJBcontrolell} enjoys the property of being a solution to the same equation also in a differential abstract way, i.e., we have the following.
\begin{Proposition}\label{prop:strict} Let $v$ be a mild solution to (\ref{eq4:HJBcontrolellbis}). Then $v\in \cald(\cala^{(0)})$ and
\begin{equation}\label{strict}
\big(\lambda-\cala^{(0)}\big)[v](x) = F_0	\big(x,D^Gv(x)	\big), \ \ \ \forall x\in H.
\end{equation}
\end{Proposition}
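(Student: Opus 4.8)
The plan is to read the statement directly off the integral form of the mild solution, combined with the resolvent identity already established in Proposition \ref{prop:der}(ii). I would set $g := F_0(\cdot,D^Gv(\cdot))$. By Definition \ref{df4:solmildHJBell} we have $v\in C^{1,G}_b(H)$, hence $D^Gv\in C_b(H,K)$, and $g$ is bounded; since the right-hand side of \eqref{eq4:solmildHJBell} is expressed through the operators $P^{(0)}_s$ acting on $g$, we are in fact working with $g\in C_b(H)$, so that the defining identity \eqref{eq4:solmildHJBell} reads exactly $v=R^{(0)}_\lambda[g]$.

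The core of the argument is then a single invocation of Proposition \ref{prop:der}(ii): the operator $R^{(0)}_\lambda$ belongs to $\call(C_b(H))$ and coincides with the resolvent $(\lambda-\cala^{(0)})^{-1}$ of the generator $\cala^{(0)}$. By the very meaning of ``resolvent of a generator'' in the $\pi$-semigroup framework of \cite{Priola99}, its range is contained in $\cald(\cala^{(0)})$ and it inverts $\lambda-\cala^{(0)}$ on $C_b(H)$; that is, for every $h\in C_b(H)$ one has $R^{(0)}_\lambda[h]\in\cald(\cala^{(0)})$ and $(\lambda-\cala^{(0)})[R^{(0)}_\lambda[h]]=h$.

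Applying this with $h=g$ and using $v=R^{(0)}_\lambda[g]$, I would conclude at once that $v\in\cald(\cala^{(0)})$ and $(\lambda-\cala^{(0)})[v]=g=F_0(\cdot,D^Gv(\cdot))$, which is precisely \eqref{strict}. The only point requiring a moment's care --- and the closest thing to an obstacle --- is the verification that $g\in C_b(H)$, i.e.\ the continuity (and not merely the boundedness asserted in the definition) of $x\mapsto F_0(x,D^Gv(x))$; this is however built into the consistency of Definition \ref{df4:solmildHJBell}, since otherwise the semigroup action $P^{(0)}_s[g]$ appearing in \eqref{eq4:solmildHJBell} would not be defined within $C_b(H)$. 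Beyond that, no genuine difficulty remains: all the analytic work has already been carried out in establishing that $\big\{P^{(0)}_t\big\}_{t\geq 0}$ is a $\pi$-semigroup and that $R^{(0)}_\lambda$ is its resolvent (Proposition \ref{prop:der}), so the present proposition is essentially a corollary of that structural fact.
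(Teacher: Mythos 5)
Your proposal is correct and follows essentially the same route as the paper: identify $v=R^{(0)}_\lambda[F_0(\cdot,D^Gv(\cdot))]$ from the mild-solution identity and invoke Proposition \ref{prop:der}(ii) to conclude $v\in\cald(\cala^{(0)})$ and \eqref{strict}. Your extra remark about the continuity of $x\mapsto F_0(x,D^Gv(x))$ is a reasonable point of care, and the paper handles it in the same implicit way (it is taken as part of the well-posedness of Definition \ref{df4:solmildHJBell}).
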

{ \begin{proof}
Using Proposition \ref{prop:der}(ii), we rewrite
\myref{eq4:solmildHJBell} as
\begin{equation}
v(x)= \big(\lambda-\cala^{(0)}\big)^{-1}\left[  F_0\big(\cdot,D^{G}v\left(\cdot\right)\big)\right](x), \ \ \ \forall x\in H.
\label{eq4:solmildHJBellbis}
\end{equation}
This entails $v\in \cald(\cala^{(0)})$ and,
applying $\lambda-\cala^{(0)}$ to both sides, we see that $v$ solves \myref{strict}.
\end{proof}}
\begin{Remark}\label{rem:reg} By Proposition \ref{prop:strict} a mild solution $v$ to \eqref{eq4:HJBcontrolellbis}
belongs to $\mathcal{D}(\cala^{(0)})$. Hence, in order to apply Theorem \ref{teo:dynkin} to it, we only need to assume that $v\in\mathcal{S}^{A,G}(H)$. This is what we indeed assume in all the  next results of this section.\end{Remark}
\subsection{Verification theorem}\label{SUB:VER}
The proof of the verification theorem relies in the so called \emph{fundamental identity}.
 \begin{Proposition}[Fundamental identity]\label{prop:FI}
Let \myref{eq4:hpl} hold. Let $v$ be a mild solution to (\ref{eq4:HJBcontrolellbis}) and assume that $v\in \mathcal{S}^{A,G}(H)$.
Let $x\in H$ and let $u(\cdot)\in \calu$ be such that
\begin{equation}\label{eq:finite}
J(x;u(\cdot)):=\E\left[\int_0^{\infty} e^{-\lambda t} l\big(X(t;x,u(\cdot)),u(t)\big)\,dt\right]<\infty.
\end{equation}
Then
\begin{align}\label{FI}
v(x)&=J(x;u(\cdot))\\&+\E\left[\int_0^{\infty}e^{-\lambda t} \left(F_0\big(X(t;x,u(\cdot)),\, D^Gv\big(X(t;x,u(\cdot))\big)\big)- F_{0,CV}\big(X(t;x,u(\cdot)),\, D^Gv\big(X(t;x,u(\cdot))\big); u(t)\big)\right)dt \right].\nonumber
\end{align}
 \end{Proposition}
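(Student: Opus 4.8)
The plan is to apply the generalized Dynkin formula (Theorem \ref{teo:dynkin}) to the mild solution $v$ itself and then send the horizon $T\to\infty$. This is legitimate: by Proposition \ref{prop:strict} a mild solution satisfies $v\in\cald(\cala^{(0)})$, and $v\in\mathcal{S}^{A,G}(H)$ by hypothesis, so $v$ lies in the class $\cald(\cala^{(0)})\cap\mathcal{S}^{A,G}(H)$ for which \eqref{Dynkin} holds. Writing $X(t):=X(t;x,u(\cdot))$ for brevity, Theorem \ref{teo:dynkin} yields, for every $T>0$,
\begin{equation*}
\E\left[e^{-\lambda T}v(X(T))\right]=v(x)+\E\left[\int_0^T e^{-\lambda t}\Big((\cala^{(0)}-\lambda)[v](X(t))+\<L(u(t)),D^Gv(X(t))\>_K\Big)\,dt\right].
\end{equation*}

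First I would substitute the differential identity of Proposition \ref{prop:strict}, namely $(\cala^{(0)}-\lambda)[v](\cdot)=-F_0(\cdot,D^Gv(\cdot))$, and then use the definition \eqref{F0CV} of $F_{0,CV}$ to write $\<L(u(t)),D^Gv(X(t))\>_K=F_{0,CV}(X(t),D^Gv(X(t));u(t))-l(X(t),u(t))$. Isolating $v(x)$, the finite-horizon identity becomes
\begin{equation*}
v(x)=\E\left[e^{-\lambda T}v(X(T))\right]+\E\left[\int_0^T e^{-\lambda t}l(X(t),u(t))\,dt\right]+\E\left[\int_0^T e^{-\lambda t}\big(F_0-F_{0,CV}\big)\,dt\right],
\end{equation*}
where the last integrand abbreviates $F_0(X(t),D^Gv(X(t)))-F_{0,CV}(X(t),D^Gv(X(t));u(t))$.

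The core of the argument is then the passage $T\to\infty$, handled term by term by exploiting sign structure rather than absolute integrability. Because $v$ is a mild solution, $F_0(\cdot,D^Gv(\cdot))$ is bounded, so applying the contraction bound of Proposition \ref{prop:der} (property (P1) holds with $M=1$, $\alpha=0$) to the fixed-point equation \eqref{eq4:solmildHJBell} shows that $v$ itself is bounded; hence $\E[e^{-\lambda T}v(X(T))]\to 0$. For the running cost, writing $l\ge-M$ since $l$ is bounded from below, the integrand $e^{-\lambda t}(l+M)\ge0$ is controlled by monotone convergence, and subtracting $\int_0^T e^{-\lambda t}M\,dt\to M/\lambda$ gives $\E[\int_0^T e^{-\lambda t}l\,dt]\to J(x;u(\cdot))$, finite by assumption \eqref{eq:finite}. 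Finally, since $F_0(x,q)=\inf_{u\in\Lambda}F_{0,CV}(x,q;u)\le F_{0,CV}(x,q;u)$ for every $u$, the last integrand is nonpositive, so monotone convergence applied to its negative gives $\E[\int_0^T e^{-\lambda t}(F_0-F_{0,CV})\,dt]\to\E[\int_0^\infty e^{-\lambda t}(F_0-F_{0,CV})\,dt]$. Collecting the three limits gives exactly \eqref{FI}.

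The step demanding the most care is this limit passage: none of the three time-integrals is obviously absolutely convergent a priori, so dominated convergence is not directly available. The argument rests entirely on the two one-sided bounds (namely $l$ bounded below and $F_0-F_{0,CV}\le0$) together with the standing assumption $J(x;u(\cdot))<\infty$, which is what forces all the limits to be finite and the infinite-horizon integral in \eqref{FI} to be well defined. A small preliminary point worth recording explicitly is the boundedness of $v$ (not merely of $D^Gv$, which is all that $C^{1,G}_b(H)$ grants directly), obtained from the integral equation defining the mild solution.
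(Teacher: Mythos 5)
Your proposal is correct and follows essentially the same route as the paper: apply the generalized Dynkin formula to $v$ (legitimized by Proposition \ref{prop:strict} and the hypothesis $v\in\mathcal{S}^{A,G}(H)$), substitute the HJB identity $(\cala^{(0)}-\lambda)[v]=-F_0(\cdot,D^Gv(\cdot))$, rewrite $\<L(u(t)),D^Gv(X(t))\>_K$ via $F_{0,CV}$ by adding and subtracting the running cost, and let $T\to\infty$ using the sign of $F_{0,CV}-F_0\ge 0$, the lower bound on $l$, and the finiteness assumption \eqref{eq:finite}. Your explicit remarks on the boundedness of $v$ (via the integral equation \eqref{eq4:solmildHJBell}) and on the one-sided monotone-convergence arguments are points the paper leaves implicit, but they do not constitute a different method.
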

 \begin{proof}
%Observe first that, by Definition \ref{df4:solmildHJBell} and by Proposition \ref{prop:strict}, we have
%\begin{equation}
%\endv\in D(\cala^{(0)})\cap C_b^{1,G}$.
Let $x\in H$, $T>0$, and let $u(\cdot)\in \calu$ be such that \eqref{eq:finite} holds.
Using Proposition \ref{prop:strict} and applying the abstract Dynkin formula (Theorem \ref{teo:dynkin}) to
$t\mapsto e^{-\lambda t}v(X(t;x,u(\cdot)))$, we get
\begin{align}\label{Dynkinv}
&\E\left[e^{-\lambda T}v\big(X(T;x,u(\cdot))\big)\right]
\\
\nonumber
&=v(x)+\E\left[
\int_0^T e^{-\lambda t}
\left[(\cala^{(0)}-\lambda)[v]\big(X(t;x,u(\cdot))\big)
+\< L(u(t)),\,D^G v\big(X(t;x,u(\cdot))\big)\>_K\right]dt\right],
\\
\nonumber
&=v(x)+\E\left[
\int_0^T e^{-\lambda t}
\left[-F_0\left(X(t;x,u(\cdot)),\,D^G v\big(X(t;x,u(\cdot))\big)\right)+\,\< L(u(t)), D^Gv\big(X(t;x,u(\cdot))\big)\>_K\right]dt\right].
\end{align}
%where, in the last equality, we used \myref{strict}.
Since
$l$ is measurable and bounded from below by \myref{eq4:hpl},
the term $\E\big[\int_0^T e^{-\lambda t}l\big(X(t;x,u(\cdot)),u(t)\big)\,dt\big]$
is well defined, possibly equal to $+\infty$.
However,  \eqref{eq:finite} actually entails
%$\int_0^{+\infty} e^{-\lambda t} l(X(t;x,u(\cdot)),u(t))dt<+\infty$,
%then it must be also
$$
\E\left[\int_0^T e^{-\lambda t}l\big(X(t;x,u(\cdot)),u(t)\big)\,dt\right]<  \infty \ \ \ \forall T>0.
$$
Then, we can  add and subtract
$\E\big[\int_0^T e^{-\lambda t}l\big(X(t;x,u(\cdot)),u(t)\big)dt\big]$ in \myref{Dynkinv}
and use \myref{F0CV} to get, rearranging the terms,
\begin{align}\label{Dynkinvbis}
&\E\left[e^{-\lambda T}v\big(X(T;x,u(\cdot))\big)\right]
-v(x)+ \E\left[\int_0^T e^{-\lambda t}l\big(X(t;x,u(\cdot)),u(t)\big)\,dt\right]
\\
\nonumber
&=\E\left[
\int_0^T e^{-\lambda t}
\left[-F_0\left(X(t;x,u(\cdot)),\,D^G v\big(X(t;x,u(\cdot))\big)\right)+
F_{0,CV}\left(X(t;x,u(\cdot)),\, D^Gv\big(X(t;x,u(\cdot))\big); u(t)\right)\right]\,dt\right].
\end{align}
Now we let $T\to + \infty$. The right hand side has a limit (possibly $+\infty$), as the integrand is positive. The left hand side clearly converges to
$J(x;u(\cdot))- v(x)$. This implies that also the limit of the right hand side is finite and
\begin{align*}
&J(x;u(\cdot))- v(x)\\
&=\E\left[\int_0^{\infty} e^{-\lambda t}
\left[-F_0\left(X(t;x,u(\cdot)),\,D^G v\big(X(t;x,u(\cdot))\big)\right)+
F_{0,CV}\left(X(t;x,u(\cdot)),\, D^Gv\big(X(t;x,u(\cdot))\big); u(t)\right)\right]dt\right].
\end{align*}
The claim follows rearranging the terms.
%
%It follows from standard arguments by the abstract Dynkin formula (Theorem \ref{teo:dynkin}) and by  Proposition \ref{prop:strict}.
 \end{proof}

\begin{Theorem}[Verification theorem]
\label{th:ver}
Let \myref{eq4:hpl} hold. Let $v$ be a mild solution to (\ref{eq4:HJBcontrolellbis}) and assume that $v\in \mathcal{S}^{A,G}(H)$.  We have the following.
\begin{itemize}
\item[(i)]
 $v\leq V$ over $H$.
 \item[(ii)]
Let $x\in H$ and assume that there exists  $u^*(\cdot)\in \calu$ such that
$\P\times dt -\mbox{a.e.}$
\begin{equation}\label{eqverfi}
F_0\big(X(t;x,u^*(\cdot)),\, D^Gv\big(X(t;x,u^*(\cdot))\big)\big)= F_{0,CV}\big(X(t;x,u^*(\cdot)),\, D^Gv\big(X(t;x,u^*(\cdot))\big); u^*(t)\big).
\end{equation}
Then $v(x)=V(x)=J(x;u^*(\cdot))$.
 \end{itemize}
 \end{Theorem}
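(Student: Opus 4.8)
The plan is to derive the verification theorem directly from the fundamental identity of Proposition \ref{prop:FI}, which already does essentially all the analytic work. The key observation is that, by the definition \eqref{eq4:F0controlell} of $F_0$ as an infimum over $u\in\Lambda$, we always have
$$
F_0\big(x,q\big)\le F_{0,CV}\big(x,q;u\big), \qquad \forall x\in H,\ q\in K,\ u\in \Lambda,
$$
so the integrand appearing in the correction term of \eqref{FI} is pointwise nonpositive. This sign fact is the engine of the whole argument.

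For part (i), I would fix $x\in H$ and an arbitrary $u(\cdot)\in\calu$. If $J(x;u(\cdot))=+\infty$ there is nothing to prove, since trivially $v(x)\le J(x;u(\cdot))$; so I may assume \eqref{eq:finite} holds. Then Proposition \ref{prop:FI} applies and gives the identity \eqref{FI}. Because the correction integrand $F_0(\cdots)-F_{0,CV}(\cdots;u(t))$ is $\P\times dt$-a.e.\ nonpositive by the infimum bound above, the expectation term in \eqref{FI} is $\le 0$, whence $v(x)\le J(x;u(\cdot))$. Taking the infimum over all admissible $u(\cdot)\in\calu$ yields $v(x)\le V(x)$, and since $x$ was arbitrary this gives $v\le V$ on $H$.

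For part (ii), suppose $u^*(\cdot)\in\calu$ satisfies the pointwise optimality condition \eqref{eqverfi} $\P\times dt$-a.e. I would first need to check that $J(x;u^*(\cdot))<\infty$ so that Proposition \ref{prop:FI} is applicable to $u^*(\cdot)$; this should follow from the fact that $v$ is finite (indeed $v\in C^{1,G}_b(H)$, hence bounded) together with the boundedness of $F_0(\cdot,D^Gv(\cdot))$ built into the definition of mild solution, used to control the identity \eqref{FI} and rule out $+\infty$. Granting finiteness, the identity \eqref{FI} applied to $u^*(\cdot)$ has a vanishing correction term, because \eqref{eqverfi} forces the integrand to be zero a.e.; therefore $v(x)=J(x;u^*(\cdot))$. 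Combining this with part (i), which gives $v(x)\le V(x)\le J(x;u^*(\cdot))=v(x)$, forces equality throughout, so $v(x)=V(x)=J(x;u^*(\cdot))$ and $u^*(\cdot)$ is optimal.

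The main obstacle I anticipate is the finiteness issue in part (ii): Proposition \ref{prop:FI} is stated under the hypothesis \eqref{eq:finite}, so before invoking it for $u^*(\cdot)$ one must independently argue that $J(x;u^*(\cdot))<\infty$. The natural route is to run the Dynkin computation \eqref{Dynkinvbis} directly with $u^*(\cdot)$: since \eqref{eqverfi} makes the right-hand side of \eqref{Dynkinvbis} vanish identically, the finite-horizon identity reads
$$
\E\left[e^{-\lambda T}v\big(X(T;x,u^*(\cdot))\big)\right]-v(x)+\E\left[\int_0^T e^{-\lambda t}l\big(X(t;x,u^*(\cdot)),u^*(t)\big)\,dt\right]=0,
$$
and boundedness of $v$ gives a uniform bound on $\E\big[\int_0^T e^{-\lambda t}l\,dt\big]$; letting $T\to\infty$ and using that $l$ is bounded below yields $J(x;u^*(\cdot))<\infty$ by monotone convergence. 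Once finiteness is secured the rest is a routine sign-and-sandwich argument, so the technical heart is really just this a priori integrability check.
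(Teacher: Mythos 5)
Your proof is correct and follows essentially the same route as the paper: part (i) from the fundamental identity \eqref{FI} together with the sign $F_0\le F_{0,CV}$, and part (ii) by first securing $J(x;u^*(\cdot))<\infty$ through the finite-horizon Dynkin identity with the correction term annihilated by \eqref{eqverfi}, then letting $T\to\infty$ using boundedness of $v$ and the lower bound on $l$. The only cosmetic difference is that the paper establishes the finiteness of $J(x;u^*(\cdot))$ by contradiction (and is slightly more explicit that \eqref{eqverfi} plus Assumption \ref{hp4:ABQforOU}(iv) and $u^*(\cdot)\in\calu$ give $\E\big[\int_0^T e^{-\lambda t}l\,dt\big]<\infty$ for each finite $T$ before the Dynkin identity is rearranged), whereas you argue directly; both are the same computation.
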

 \begin{proof}
\emph{(i)} By \myref{FI}, for all $u(\cdot)\in \calu$ such that
\eqref{eq:finite} holds, we have $v(x)\le J(x;u(\cdot))$, which yields this claim.

\emph{(ii)}
Let $u^*(\cdot)$ such that \myref{eqverfi} holds.
If $J(x;u^*(\cdot))<+\infty$, then, from \myref{FI},
we immediately get $v(x)= J(x;u^*(\cdot))$, which, combined with item (i), yields the claim.
We now prove that it cannot be $J(x;u^*(\cdot))=+\infty$.
Assume, by contradiction, that $J(x;u^*(\cdot))=+\infty$.  Then, by \myref{eqverfi}, we have $\P\times dt -\mbox{a.e.}$
\begin{equation}
\label{eq:lustar}
l\big(X(t;x,u^*(\cdot)),u^*(t)\big)
=
F_0\left(X(t;x,u^*(\cdot)),\, D^Gv\big(X(t;x,u^*(\cdot))\big)\right)
- \<L(u(t)),D^Gv\big(X(t;x,u^*(\cdot))\big)\>_K.
\end{equation}
{By \myref{strict}, $F_0(\cdot,D^Gv(\cdot))$ is bounded. Hence,   Assumption \ref{hp4:ABQforOU}-(iv),  the fact that $u^*(\cdot)\in \calu$ and \myref{eq:lustar} imply  $\E\big[\int_0^T e^{-\lambda t}l\big(X(t;x,u^*(\cdot)),u^*(t)\big)\,dt \big]<  \infty$ for all $T>0$.}
Then, we can argue as in the proof of Proposition \ref{prop:FI}
getting \myref{Dynkinvbis}  with $u^*(\cdot)$ in this case and, using again \myref{eqverfi},
\begin{align}\label{Dynkinvter}
&\E\left[e^{-\lambda T}v\big(X(T;x,u^*(\cdot))\big)\right]
-v(x)+ \E\left[\int_0^T e^{-\lambda t}l\big(X(t;x,u^*(\cdot)),u^*(t)\big)dt\right]=0.
\end{align}
Letting $T\to +\infty$ we get
$v(x)=J(x;u^*(\cdot))=+\infty$, a contradiction, as $v$ is finite.
\end{proof}

\subsection{Optimal feedback controls}
\label{SS:FEEDBACKS}

As usual, the verification theorem is composed of two statements: the first one states that the solution to the HJB equation enjoys the property of being smaller than the value function; the second one is the most important from the point of view of the control problem, as it furnishes a sufficient condition of optimality (\eqref{eqverfi} in our case). Then,  the problem becomes
the so-called synthesis of an optimal control, i.e. to produce a control $u^*(\cdot)$ verifying such condition. The answer relies in the study of the \emph{closed loop equation}.
%In our context, assume that

Let $v$ be a mild solution to HJB equation \eqref{eq4:HJBcontrolellbis}.
Assuming that the infimum of the map
 $$\Lambda \to \R , \ \ u\mapsto F_{0,CV}\,\big(x ,D^G v(x);u\big)$$ is attained
and defining
%in \eqref{eq2:feedbackregular},
the multivalued function {(\emph{feedback map})}
\begin{equation}
\label{eq4:feedbackregularell}
\begin{array}{l}
\Phi\colon H \ \longrightarrow \ 2^\Lambda,\\[5pt]
 \ \ \ \ \ \ \ x \ {\longmapsto} \ \arg\min_{u\in \Lambda}\,\,
F_{0,CV}\,\big(x ,D^G v(x);u\big),
\end{array}
\end{equation}
the closed loop equation (CLE) associated with our problem and to $v$ is indeed a stochastic differential inclusion:
\begin{equation}
\label{eq4:CLEinclusionell}
%\left \{
%\begin{array}{l}
{d} X(s) \in \big[AX(s)+ G L(\Phi (X(s)))\big]\,{d} s
+\sigma\, {d} W(s).
%\\[5pt]
%X(0)=x.
%\end{array}
%\right.
\end{equation}
%Similarly to Corollary \ref{cr2:optimalfeedbackinfhor}
We have the following result.
%whose proof is omitted as it is completely similar
%to the one of Corollary \ref{cr4:optimalfeedback}.

\begin{Corollary}
%[Optimal Feedback Controls]
\label{cr4:optimalfeedbackell}
%Let the assumptions of Theorem \ref{th:ver} hold.
Let \myref{eq4:hpl} hold. Let $v$ be a mild solution to (\ref{eq4:HJBcontrolellbis}) and assume that $v\in \mathcal{S}^{A,G}(H)$. Let $x\in H$ and  assume  that the feedback map $\Phi$
defined in (\ref{eq4:feedbackregularell}) admits a measurable selection
$\phi:H \to U$ and consider the SDE
\begin{equation}
\label{eq4:CLEselectionell}
\begin{cases}
{d} X(s) = \left[AX(s)+ GL(\phi (X(s))) \right]{d} s +
\sigma\, {d} W(s)\\[5pt]
X(0)=x.
\end{cases}
\end{equation}
Assume that \eqref{eq4:CLEselectionell} has a mild solution {in $\mathcal{M}_\calp^{1,loc}(U)$, i.e. there exists $X_\phi(s;x)\in \mathcal{M}_\calp^{1,loc}(U)$ such that}
\begin{equation}\label{eq4:CLEselectionellbis}
X_\phi(t;x):=e^{tA}x+ \int_0^t e^{(t-s)A}\sigma\, dW(s)
+\int_0^t \overline{e^{(t-s)A} G}L(\phi(X_\phi(s;x)))\,ds, \ \ \ \forall t\geq 0.
\end{equation}
 Define, for $s \ge 0$, $u_\phi (s):=\phi(X_\phi(s;x))$ and assume that $u_\phi(\cdot)\in \calu$. Then $v(x)=V(x)=J(x;u_\phi(\cdot))$. In particular  the couple $({u_\phi}(\cdot),{X_\phi}(\cdot;x))$
is optimal at $x$.

Moreover, if $\Phi(x)$ is single-valued and the mild solution
to \eqref{eq4:CLEselectionell} is unique, then the optimal control is unique. %{\blu{RIFLETTERE SU QUESTO: MI PARE CHE LO SIA LO STATO OTTIMO; PER IL CONTROLLO HO DEI DUBBI}}
%\end{enumerate}
\end{Corollary}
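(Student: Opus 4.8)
The forward part of the statement is essentially a direct application of the verification theorem (Theorem \ref{th:ver}(ii)), and the plan is to check that $u_\phi(\cdot)$ meets its hypotheses. First I would observe that the closed-loop process $X_\phi(\cdot;x)$ coincides with the controlled state $X(\cdot;x,u_\phi(\cdot))$ associated to the admissible control $u_\phi(\cdot)\in\calu$: substituting $u_\phi(s)=\phi(X_\phi(s;x))$ into the mild formula \eqref{OU} produces exactly the integral equation \eqref{eq4:CLEselectionellbis}, so both processes solve the same equation and, by uniqueness (up to indistinguishability) of the continuous version, they agree. Next I would verify the pointwise optimality condition \eqref{eqverfi}. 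Since $\phi$ is a measurable selection of the feedback map $\Phi$ defined in \eqref{eq4:feedbackregularell}, for every $y\in H$ one has $\phi(y)\in\arg\min_{u\in\Lambda}F_{0,CV}(y,D^Gv(y);u)$, i.e. $F_{0,CV}(y,D^Gv(y);\phi(y))=F_0(y,D^Gv(y))$; evaluating this at $y=X_\phi(t;x)=X(t;x,u_\phi(\cdot))$ and recalling $u_\phi(t)=\phi(X_\phi(t;x))$ yields precisely \eqref{eqverfi} with $u^*=u_\phi$.

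With these two facts in hand, Theorem \ref{th:ver}(ii) applies verbatim and gives $v(x)=V(x)=J(x;u_\phi(\cdot))$, so that $(u_\phi(\cdot),X_\phi(\cdot;x))$ is an optimal couple at $x$.

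For the uniqueness assertion I would argue as follows. Assume $\Phi$ is single-valued, say $\Phi(y)=\{\phi(y)\}$ for each $y\in H$, and that the mild solution of the CLE \eqref{eq4:CLEselectionell} is unique. Let $u'(\cdot)\in\calu$ be any optimal control; by the forward part $V(x)=v(x)$, which is finite since $v$ is bounded (immediate from \eqref{eq4:solmildHJBell} and the boundedness of $F_0(\cdot,D^Gv(\cdot))$), so $J(x;u'(\cdot))=v(x)<\infty$ and the fundamental identity (Proposition \ref{prop:FI}) applies to $u'(\cdot)$. Writing $X'=X(\cdot;x,u'(\cdot))$, optimality forces the correction term in \eqref{FI} to vanish, i.e.
\begin{equation*}
\E\left[\int_0^{\infty}e^{-\lambda t}\Big(F_{0,CV}\big(X'(t),D^Gv(X'(t));u'(t)\big)-F_0\big(X'(t),D^Gv(X'(t))\big)\Big)\,dt\right]=0.
\end{equation*}
The integrand is nonnegative by the very definition of $F_0$ as an infimum, hence it vanishes $\P\times dt$-a.e.; this says $u'(t)\in\arg\min_{u}F_{0,CV}(X'(t),D^Gv(X'(t));u)=\Phi(X'(t))=\{\phi(X'(t))\}$, so $u'(t)=\phi(X'(t))$ $\P\times dt$-a.e. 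Consequently $X'$ solves the mild CLE \eqref{eq4:CLEselectionellbis}, and uniqueness of its mild solution gives $X'=X_\phi$ up to indistinguishability; therefore $u'(t)=\phi(X'(t))=\phi(X_\phi(t;x))=u_\phi(t)$ $\P\times dt$-a.e., proving uniqueness of the optimal control.

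The forward direction is routine once the identification $X_\phi=X(\cdot;x,u_\phi(\cdot))$ is made; the main delicate point is the uniqueness part, where one must pass from the vanishing of the integral correction to the \emph{pointwise} (a.e.) minimality condition and then recognize $X'$ as a bona fide mild solution of the closed-loop equation, so that its uniqueness can be invoked. Here the single-valuedness of $\Phi$ is exactly what is needed to turn the a.e.\ inclusion $u'(t)\in\Phi(X'(t))$ into the feedback identity $u'(t)=\phi(X'(t))$ that closes the loop.
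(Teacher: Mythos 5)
Your proposal is correct and follows essentially the same route as the paper: identify $X_\phi(\cdot;x)$ with $X(\cdot;x,u_\phi(\cdot))$ via the mild formula, check \eqref{eqverfi} from the selection property of $\phi$, invoke Theorem \ref{th:ver}(ii), and for uniqueness use the fundamental identity \eqref{FI} together with the sign of the integrand and single-valuedness of $\Phi$ to show any optimal trajectory solves the closed-loop equation. The only difference is that you spell out the finiteness of $J(x;u'(\cdot))$ needed to apply Proposition \ref{prop:FI}, a detail the paper leaves implicit.
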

\begin{proof}
 Consider the  couple $(u_{\phi}(\cdot),X_{\phi}(\cdot))$ and observe that $X_{\phi}(\cdot)$ is the unique mild solution (in the strong probabilistic sense) of the state equation associated to the control $u_{\phi}(\cdot)$, so that $X_\phi(\cdot;x)\equiv X(\cdot;x,u_\phi(\cdot))$.
By construction such couple satisfies \myref{eqverfi}.
Then, by Theorem \ref{th:ver}-(ii) we obtain that it   is optimal.

Let us address now the  uniqueness issue. We observe that, if
$(\hat u(\cdot), X(\cdot;x,\hat{u}(\cdot)))$ is another optimal couple at $x$, we immediately have, by \eqref{FI} and the fact that $v(x) =V(x)$,
$$
\E\left[\int_{0}^{\infty} e^{-\lambda s}\left[
		F_{0}\left(X(s;x,\hat{u}(\cdot)),\,D^G v\big( X(s;x,\hat{u}(\cdot))\big)\right) -
		F_{0,CV}\left(X(s;x,\hat{u}(\cdot)),\,D^G  v\big(X(s;x,\hat{u}(\cdot))\big);\hat u(s)\right)
		\right]ds\right]=0.
$$
As the integrand is always negative and as $\Phi$ is single-valued, this implies that  $\P\times ds$-a.e. we have
$\hat u(\cdot)=\Phi\big(X(\cdot;x,\hat{u}(\cdot))\big)$. This shows that  $X(\cdot;x,\hat{u}(\cdot))$ solves \eqref{eq4:CLEselectionell}. Then uniqueness of mild solutions to \eqref{eq4:CLEselectionell} gives the claim.
\end{proof}
%{\red{\begin{Remark}\label{rem:specific}
%To prove the results of this section, Assumption \ref{ass:G} is only needed to show that the conlcusion of  Lemma \ref{lemma1} holds when $f$ is the  mild solution $v$ of the HJB equation associated to our initial control problem. This could also be done in different contexts, where the actual Assumption \ref{ass:G} does not hold.
%In particular, we mention the stochastic control problem with delay in the  control variable studied in \cite{GozziMasiero16a} (see Section \ref{sec:delay} below). For such problem
%Assumption \ref{ass:G} does not hold (taking as $G$ the bounded operator $B$ described in \cite[Subsection 3.2]{GozziMasiero16a}). However, it can be proved, using the arguments of \cite[Subsection 4.2]{GozziMasiero16a}, that the statement of Lemma \ref{lemma1} holds in this case for the class of functions $\Sigma^1_{T,\alpha}$
%defined in \cite[Def.\,5.1]{GozziMasiero16a}, to which the mild solution $v$ of the associated HJB equation belongs.
%\end{Remark}}}
%
%

We conclude the section commenting on the extension of  our results to the case when the control problem is considered in the so-called \emph{weak formulation}. So far, we have considered our family of stochastic optimal control problems in the {\em strong formulation}.
% for the stochastic optimal control problem.
It is possible to consider the problem also in the so-called  weak formulation, i.e. letting the filtered probability space and the Wiener process vary with the control strategy $u(\cdot)$
(see, e.g., \cite[Ch.\,2]{YongZhou99}).
More precisely, in the weak formulation, the control strategy is a $6$-tuple
$
\left(\overline{\Omega},\overline{\mathcal{F}},\{\overline{\mathcal{F}_t}\}_{t\geq 0}, \overline{\P},  \overline{W}, \overline{u}(\cdot) \right)
$.
%where $\left(\overline{\Omega},\overline{\mathcal{F}},\{\overline{\mathcal{F}_t}\}_{t\geq 0}, \overline{\P}\right)$ is a filtered probability space, $\overline{W}=(\overline{W_t})_{t\geq 0}$ is a $\Xi$-valued cylindrical Brownian motion, and $\overline{u}(\cdot):\overline{\Omega}\times[0,+\infty)\to \Lambda$ is $\{\overline{\mathcal{F}_t}\}_{t\geq 0}$-progressively measurable with
%$\int_0^t \E|\overline{u}(s)|_U^{p}ds <+\infty$ for all $t\geq 0$.
Calling $\overline\calu$ the set such control strategies, the objective is to minimize the cost \myref{eq4:CF1ell} over $\overline\calu$. The resulting value function $\overline V$ is, in principle, smaller than $V$. The main advantage in choosing such formulation is  that existence of optimal control strategies in feedback form is easier to obtain.
The verification theorem above also holds when we consider the control problem in its weak formulation.
Indeed, the proof of Theorem \ref{th:ver} works for every filtered probability space and any cylindrical Brownian motion  on it. Hence, letting the filtered probability space and the cylindrical Brownian motion  vary,
one gets that $v \le \overline V$ over $H$. Moreover, if \myref{eqverfi} holds for a given control strategy\,(\footnote{Elements of $\overline\calu$ are, rigorously speaking, $6$-tuples; however, for simplicity, we denote them simply by $\overline{u(\cdot)}$.}) $\overline{u^*}(\cdot)\in \overline \calu$, then we have $v(x)=\overline V(x)=J(x;\overline{u^*}(\cdot))$.
One gets the following.
\begin{Corollary}
%[Optimal Feedback Controls]
\label{cr4:optimalfeedbackellbis}
Let \myref{eq4:hpl} hold. Let $v$ be a mild solution to (\ref{eq4:HJBcontrolellbis}) and assume that $v\in \mathcal{S}^{A,G}(H)$. Let $x\in H$ and  assume  that the feedback map $\Phi$
defined in (\ref{eq4:feedbackregularell}) admits a measurable selection
$\phi:H \to U$. Assume now that  \myref{eq4:CLEselectionell} has a \emph{martingale solution}\footnote{Weak-mild solution in the terminology of \cite{FGSbook}.}  (see \cite[p.\,220]{DaPratoZabczyk14} or \cite[Def.\,3.1,\,p.\,75]{GawareckiMandrekar10} for the definition)
$\overline{X_\phi}(\cdot;x)$ in some filtered probability space
$\left(\overline{\Omega},\overline{\mathcal{F}},\big\{\overline{\mathcal{F}_t}\big\}_{t\geq 0},\, \overline{\P} \right)$
and for some $\Xi$-valued cylindrical Brownian motion $\overline{W}$ defined on it.
Define, for $s \ge 0$, $\overline{u_\phi} (s)=\phi\big(\overline{X_\phi}(s;x)\big)$
and assume  $\overline{u_\phi}(\cdot)\in \overline\calu$\,(\footnote{In the sense that the $6$-tuple identified by $u_\phi$ belongs to $\overline \calu$.}).
Then $v(x)=\overline{V}(x)=J(x;\overline{u_\phi}(\cdot))$. In particular   $\big(\overline{u_\phi}(\cdot),\overline{X_\phi}(\cdot;x)\big)$
is an optimal couple.
\end{Corollary}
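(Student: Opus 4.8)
The plan is to transcribe the strong-formulation argument of Corollary \ref{cr4:optimalfeedbackell} into the weak setting, the only genuinely new ingredient being the observation that the entire chain of results leading to the verification theorem is insensitive to the choice of stochastic basis. First I would record why this is so: the transition semigroups $\{P_t^{(k)}\}_{t\geq0}$ of \eqref{eq:semi}, their generators $\cala^{(k)}$, the splitting \eqref{eq4:Aksplit}, and hence the Dynkin formula (Theorem \ref{teo:dynkin}) are all defined purely through the \emph{laws} $\textsl{Law}(X^{(k)}(t;x))$ of the Ornstein--Uhlenbeck process, which are Gaussian measures determined only by $A,\sigma,G,k$ and not by the particular $(\Omega,\mathcal F,\{\mathcal F_t\},\P,W)$ on which the process is realized. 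Consequently the fundamental identity (Proposition \ref{prop:FI}) and both assertions of Theorem \ref{th:ver} hold verbatim on any admissible weak basis $(\overline\Omega,\overline{\mathcal F},\{\overline{\mathcal F_t}\}_{t\geq0},\overline\P,\overline W)$. In particular the fundamental identity gives $v(x)\leq J(x;\overline u(\cdot))$ for every $\overline u(\cdot)\in\overline\calu$, and taking the infimum yields $v\leq\overline V$ on $H$.

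Next I would identify the candidate optimal couple. By hypothesis $\overline{X_\phi}(\cdot;x)$ is a martingale (weak-mild) solution of \eqref{eq4:CLEselectionell} on $(\overline\Omega,\overline{\mathcal F},\{\overline{\mathcal F_t}\}_{t\geq0},\overline\P)$ driven by $\overline W$; by definition this means exactly that $\overline{X_\phi}(\cdot;x)$ satisfies the mild integral equation \eqref{eq4:CLEselectionellbis} with $\overline W$ in place of $W$. Setting $\overline{u_\phi}(s):=\phi(\overline{X_\phi}(s;x))$, the process $\overline{X_\phi}(\cdot;x)$ is therefore precisely the controlled state $X(\cdot;x,\overline{u_\phi}(\cdot))$ associated, on this basis, with the control $\overline{u_\phi}(\cdot)$, which lies in $\overline\calu$ by assumption. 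Since $\phi$ is a measurable selection of the feedback map $\Phi$ of \eqref{eq4:feedbackregularell}, for each $s$ the value $\overline{u_\phi}(s)$ attains the infimum defining $F_0$ at the point $\big(\overline{X_\phi}(s;x),D^Gv(\overline{X_\phi}(s;x))\big)$; that is, the optimality condition \eqref{eqverfi} holds $\overline\P\times ds$-a.e. for the couple $(\overline{u_\phi}(\cdot),\overline{X_\phi}(\cdot;x))$.

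Finally I would apply the weak version of Theorem \ref{th:ver}(ii) to this couple to obtain $v(x)=J(x;\overline{u_\phi}(\cdot))$. Combining with $v\leq\overline V$ from the first step and the trivial bound $\overline V(x)\leq J(x;\overline{u_\phi}(\cdot))$ coming from the definition of $\overline V$ as an infimum over $\overline\calu$, the chain $v(x)\leq\overline V(x)\leq J(x;\overline{u_\phi}(\cdot))=v(x)$ forces equality throughout, proving $v(x)=\overline V(x)=J(x;\overline{u_\phi}(\cdot))$ and the optimality of the couple.

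I expect the only delicate point to be the basis-independence invoked in the first paragraph: one must check that each step in the derivation of Theorem \ref{th:ver} --- the approximation by simple processes of Lemma \ref{lemma:conv}, the conditional-law computation of Lemma \ref{lemma:imp}, and the passages to the limit by dominated convergence --- uses only distributional properties of the Ornstein--Uhlenbeck process, so that nothing is lost when $(\Omega,\mathcal F,\{\mathcal F_t\},\P,W)$ is replaced by $(\overline\Omega,\overline{\mathcal F},\{\overline{\mathcal F_t}\}_{t\geq0},\overline\P,\overline W)$. Everything else is a faithful repetition of the proof of Corollary \ref{cr4:optimalfeedbackell}, with the strong-solution existence of the closed-loop equation replaced by the weaker requirement of a martingale solution.
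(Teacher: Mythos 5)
Your proposal is correct and takes essentially the same route as the paper: the paper likewise observes that the proof of Theorem \ref{th:ver} is valid on any filtered probability space carrying a cylindrical Brownian motion (whence $v\leq\overline V$), and then notes that the martingale solution together with the measurable selection $\phi$ yields an admissible weak couple satisfying \eqref{eqverfi}, to which the weak version of the verification theorem applies. Your explicit justification of the basis-independence via the fact that the semigroups, generators and Dynkin formula depend only on the laws of the Ornstein--Uhlenbeck processes is precisely the point the paper leaves implicit.
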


\section{Applications}\label{sec:app}

{In the present section we provide two examples of application of our results.

The first
example, fully developed, concerns the optimal control of the stochastic heat equation in a given space region ${\mathcal{O}}\subseteq \R^d$ when the control can be exercised only at the boundary  $\partial{\mathcal{O}}$.
Precisely, we consider the case when the control at the boundary enters through a Neumann-type boundary condition, corresponding to control the heat flow at the boundary.} The existence and uniqueness of mild solutions to the associated elliptic HJB equation in this case is guaranteed (under suitable conditions) by the results of \cite{FedericoGozzi16}.

The second example
concerns the optimal control of a stochastic differential equation with delay in the control process (see \cite{GozziMasiero12,GozziMasiero16b} for the treatment of the same problem over finite horizon). In this case, the result we give needs to \emph{assume} the existence of a mild solution to the associated elliptic HJB equation. The reason for that is that a theory of mild solutions for  elliptic HJB equations associated to this kind problem has not been yet developed in the elliptic case. Indeed, unlike the first example, this kind of  equations is not covered  by the results of \cite{FedericoGozzi16}, due to the lack of $G$-smoothing. In this case it is needed an \emph{ad hoc} treatment of the equation, dealing with the specific case at hand, to show the existence of mild solutions (see, e.g.,  the aforementioned references \cite{GozziMasiero12,GozziMasiero16b} in the parabolic case). Although a result of this kind for elliptic equation seems straightforward, a rigorous statement of this result has not been rigourously fixed yet.  For this reason, we limit ourselves to provide a weaker result taking the existence of mild solutions to the associated HJB equation as an assumption and leaving the investigation of that for future work.
Due to the lack of a rigourous background
on which relying our results, we do not state in this case a theorem and just keep the arguments at the level of an informal exposition.
\subsection{Neumann Boundary control of a stochastic heat equation with additive noise}
\label{SS:NEUMANN}
We consider  the optimal control of a nonlinear stochastic heat equation in a given space region ${\mathcal{O}}\subseteq \R^d$ when the control can be exercised only at the boundary of  ${\mathcal{O}}$.
%% or in a subset of ${\mathcal{O}}$.
%Precisely we consider the cases when the control at the boundary enters through a Neumann-type boundary condition, corresponding to control the heat flow at the boundary.

\subsubsection{Problem setup}
\label{SSSE2:HEATBOUNDARYSETTINGD}
Let ${\mathcal{O}}$ be an open, connected, bounded subset of  ${\mathbb{R} }^d$ with regular (in the sense of \cite[Sec.\,6]{Lasiecka80}) boundary $\partial {\mathcal{O}}${\footnote{{We stress that such conditions may allow corners in the boundary: in particular, when $d=2$ squares satisfy the required regularity.}}}. We consider the controlled dynamical system driven by the following SPDE in the time interval $[0,+\infty)$:
%\begin{equation}
%\label{eq2:state-boundaryPDEDir}
%\left\{
%\begin{array}{ll}
%\displaystyle{dy(s,\xi) =
%\frac{\partial^2}{\partial \xi^2} y(s,\xi)
% ds+[\sigma dW (s)](\xi)},
%& (s,\xi) \in [0,+\infty) \times {\mathcal{O}},
%\\\\
%y(0,\xi ) = x (\xi) & \xi\in {\mathcal{O}}, \\\\
%\displaystyle{ y(s,\xi) = \alpha (s,\xi),}
%& (s,\xi)\in  [0,+\infty)\times \partial {\mathcal{O}}, \end{array}\right.
%\end{equation}
%\begin{equation}
%\label{eq2:state-boundaryPDEDir}
%\left\{
%\begin{array}{ll}
%\displaystyle{\frac{\partial}{\partial s} y(s,\xi) =
%\frac{\partial^2}{\partial \xi^2} y(s,\xi)+ \dot{W}_Q(s\xi)},
%& (s,\xi) \in [0,+\infty) \times {\mathcal{O}},
%\\\\
%y(0,\xi ) = x (\xi) & \xi\in {\mathcal{O}}, \\\\
%\displaystyle{ y(s,\xi) = \alpha (s,\xi),}
%& (s,\xi)\in  [0,+\infty)\times \partial {\mathcal{O}}, \end{array}\right.
%\end{equation}
\begin{equation}
\label{eq2:state-boundaryPDENeu}
\left\{
\begin{array}{ll}
\displaystyle{\frac{\partial y(t,\xi)}{\partial t} =
\Delta y(t,\xi)+ \sigma \dot{W}(t,\xi)},
& (t,\xi) \in [0,+\infty) \times {\mathcal{O}},
\\\\
y(0,\xi ) = x (\xi), & \xi\in {\overline{\mathcal{O}}}, \\\\
\displaystyle{ \frac{\partial y(t,\xi)}{\partial n}= {\gamma_0} (t,\xi),}
& (t,\xi)\in  [0,+\infty)\times \partial {\mathcal{O}}, \end{array}\right.
\end{equation}
where:
\vspace{-.2cm}
\begin{itemize}
   \item $y: [0,+\infty)\times \mathcal{O} \times \Omega\to \R$ is the stochastic process describing  the evolution of the temperature distribution and is the {\em state variable} of the system;
   \vspace{-.2cm}
   \item ${\gamma_0}:[0,+\infty)\times \partial \mathcal{O}\times \Omega\to \R$ is the stochastic process representing the heat flow at the boundary; it is the {\em control variable} of the system and acts at the boundary of it: this is the reason of the terminology ``boundary control";
   \vspace{-.2cm}
%  \item $\Delta_\xi$ is the Laplace operator;
%  \item $f \in C^0(\mathbb{R})$ is a nonlinear function of the state, which may represent a ``reaction'' term;
\item $n$ is the outward unit normal vector at the boundary $\partial \calo$;
\vspace{-.2cm}
  \item $x\in L^2(\mathcal{O}) $ is the initial state (initial temperature distribution) in the region $\mathcal{O}$;
  \vspace{-.2cm}
  \item $W$ is a cylindrical Wiener process in $L^2(\calo)$;
  \vspace{-.2cm}
  \item $\sigma\in \call(L^2(\calo))$.
%   \item $\sigma\in \call_2(\Xi;L^2(\calo))$ is a diffusion operator.

\end{itemize}

%\smallskip
Assume that this equation is well posed (in some suitable sense,
see below for the precise setting) for every given ${\gamma_0(\cdot,\cdot)}$ in a suitable set of admissible control processes 
%\footnote{For the concept of solution and the the assumptions on the data $f, x, Q$ and on the control strategy $\alpha $ that guarantee the existence and uniqueness of it %see next section.}
and denote its unique solution  by ${y^{x,\gamma_0(\cdot,\cdot)}}$ to underline the dependence of the state $y$ on the control ${\gamma_0(\cdot,\cdot)}$ and on the initial datum $x$.
The controller aims at minimizing, over the setof admissible controls, the {objective} functional
\begin{equation}
\label{eq1:CFLapDir}
{\mathbb{E}}\Bigg[ \int_{0}^{\infty}e^{-\lambda t}\left(\int_{{\mathcal{O}}}\ell_1\big(y^{x,{\gamma_0(\cdot,\cdot)}}(t,\xi )\big) \,d\xi + \int_{{\partial\mathcal{O}}} \! \ell_2 \big({\gamma_0} (t,\xi)\big)\, d\xi \right)\,\ud t   \Bigg],
\end{equation}
{where $\ell_1, \ell_2:\R\to\R$ are given measurable functions bounded from below and $\lambda>0$ is a discount factor.}

\subsubsection{Infinite dimensional setting}
\label{SSSE2:HEATBOUNDARYINFDIMSETTING}

We now rewrite the state equation  (\ref{eq2:state-boundaryPDENeu}) and the functional \eqref{eq1:CFLapDir} in an infinite dimensional setting in the space  $H\coloneqq L^2({\mathcal{O}})$. For more details, we refer to \cite[Sec.\,5]{FedericoGozzi16} and references therein.
Consider the realization of the Laplace operator with vanishing Neumann boundary conditions\,\footnote{To be precise, $\cald(A_N)$ is the closure in $H^2(\calo)$ of the set of functions $\phi\in C^2(\overline\calo)$ having vanishing normal derivative at the boundary $\partial\calo$.}:
\begin{equation}
\label{eq:def-AN}
\left \{
\begin{array}{l}
\cald(A_N) := \left \{ \phi\in H^2 ({\mathcal{O}})  : \  \frac{\partial \phi}{\partial n} = 0 \; \text{\rm\;  on } \partial{\mathcal{O}}  \right \}, \\[7pt]
A_N\phi:= \Delta\phi, \ \ \forall \phi\in \cald(A_N).
\end{array}
\right.
\end{equation}
It is well-known (see, e.g.,  \cite[Ch.\,3]{Lunardi95}) that $A_N$ generates a strongly continuous  analytic  semigroup $\big\{e^{t A_N}\big\}_{t\geq 0}$ in $H$. Moreover, $A_N$ is a self-adjoint and dissipative operator. In particular $(0,+\infty)\subset \varrho(A_N)$, where $ \varrho(A_N)$ denotes the resolvent set of $A_N$. So,  if $\delta > 0$, then  $(\delta I-A_N)$ is invertible and $(\delta I-A_N)^{-1}\in \mathcal{L}(H)$. {Moreover (see, e.g., \cite[App.\,B]{Lasiecka80})
the operator $(\delta I-A_N)^{-1}$ is compact. Consequently, there exists an orthonormal complete sequence $\{e_k\}_{k\in \N}$ such that the operator
$A_N$ is diagonal with respect to it:
\begin{equation}\label{diag}
A_Ne_k=-\mu_k e_k, \ \ k\in\N,
\end{equation}
for a suitable sequence of eigenvalues $\{\mu_k\}_{k\in \N}\subseteq \R^+$ repeated according to their multiplicity (they are nonnegative  due to dissipativity of $A_N$). We assume that such sequence is increasingly ordered. {Then,  $\mu_0=0$, as clearly the constant functions belong to Ker\,$(A_N)$, and  $\mu_k>0$ for each $k\in \N_0:=\N\setminus\{0\}$, since, as an immediate consequence of the Gauss-Green formula, only the constant functions belong to  Ker\,$(A_N)$.}
%By compactness of $(\delta-A_N)^{-1}$, we have  $\mu_k \to + \infty $ as $k \to +\infty$.
Moreover, \cite[Sec.\,5.6.2, p.\,395]{Triebel78}
(see also \cite[App.\,B]{Lasiecka80}) provides also a growth rate for the sequence of eigenvalues; indeed
\begin{equation}
\label{kj}
\mu_k \sim k^{2/d}.
\end{equation}
%Note that \eqref{kj} in particular yields \blu{FORSE $0$ E' SEMPRE SEMPLICE???}
%\begin{equation}\label{kj2}
%\exists k_0\in\N: \ \mu_k>0 \ \forall k\geq k_0.
%\end{equation}}
We have (see, e.g., \cite[App.\,B]{Lasiecka80}) the isomorphic identification
\begin{equation}\label{pppd}
\cald\big((\delta I-A_N)^\alpha\big)=H^{2\alpha}({\mathcal{O}}),\quad \forall \alpha\in \left(0,\frac{3}{4}\right), \  \forall \delta>0,
\end{equation}
where $H^{s}(\calo)$ denotes the Sobolev space of exponent $s\in \R$.
Next, consider the following problem with Neumann boundary condition:
\begin{equation}
\label{eq:Neumann-problem}
\left \{
\begin{array}{ll}
\Delta w(\xi)=\delta w(\xi), & \xi\in {\mathcal{O}} \\[8pt]
\frac{\partial w}{\partial n}(\xi) = \alpha(\xi), & \xi\in \partial{\mathcal{O}}.
\end{array}
\right .
\end{equation}
 Given any $\delta> 0$ and $\alpha\in L^2(\partial {\mathcal{O}})$, there exists a unique solution $N_\delta\alpha\in H^{3/2}({\mathcal{O}})$ to (\ref{eq:Neumann-problem}). Moreover, the operator (\emph{Neumann map})
\begin{equation}
\label{eqapp:defNeumannmap}
N_\delta :  L^2(\partial {\mathcal{O}}) \to H^{3/2}(\calo),
\end{equation}
is continuous (see \cite[Th.\,7.4]{LionsMagenes-1-EN}).
So, in view of \eqref{pppd}, the map
\begin{equation}
\label{eqapp:defNeumannmapdelta}
N_\delta :  L^2(\partial {\mathcal{O}}) \to
\cald\big((\delta I -A_N)^{\frac{3}{4} - \varepsilon}\big), \ \ \ \varepsilon\in(0,3/4),
\end{equation}
is continuous. In \cite[Sec.\,5]{FedericoGozzi16}, it is shown that the natural abstract reformulation of the original control problem in the space $H$ is
\begin{equation}
\label{eq:strongformAN}
\left \{
\begin{array}{l}
\ud X(t) = \left [  A_N X(t) + G_N^{\delta,\varepsilon} L_N^{\delta,\varepsilon}\gamma(t) \right ] \,d t
+ \sigma \,d W(t), \\[8pt]
X(0) = x.
\end{array}
\right.
\end{equation}
where
$
L_N^{\delta,\varepsilon}:= (\delta I-A_N)^{\frac{3}{4}-\varepsilon} N_\delta \in \mathcal{L}(L^2(\partial\calo); H)$,
 $G_N^{\delta,\varepsilon}\coloneqq  (\delta I-A_N)^{\frac{1}{4}+\varepsilon}$, and  $u(t)\coloneqq \gamma_0(t,\cdot)\in L^2(\partial \calo)$ for $t\geq 0$.
We are now in the framework of \eqref{eq4:SE1ellbis}, with $K=H$, $A=A_N$, $G= G_N^{\delta,\varepsilon}$, $L= L_N^{\delta,\varepsilon}$, and $U=L^2(\partial\calo)$.
Let us consider, as  set of admissible controls,
$$
\calu\coloneqq \left\{u: [0,+\infty)\times \Omega \to \Lambda: \hbox{ $u(\cdot)$ is
$\big\{\mathscr{F}_t\big\}_{t\geq 0}$-prog. meas. and s.t.} \
\int_0^t \E\left[|u(s)|_{L_2(\partial\calo)}^{p}\right]\,ds <\infty \ \ \forall t\geq0 \right\},
$$
where $\Lambda\subseteq L^2(\partial \calo)$ {and $p$ will be specified later} {according to \eqref{eq:choicep}}.
% $L_N^{\delta,\varepsilon}(\Lambda)$ is bounded in $H$
%and that,
 Defining
 $$ l_1(x) \coloneqq \int_{{\mathcal{O}}}\ell_1 (x(\xi)) \ud\xi, \ \ \ \ \ l_2(u)\coloneqq \int_{{\partial\mathcal{O}}}\ell_2 (u(\xi )) \ud\xi,
 $$
and
\[
l\colon H\times \Lambda \to \mathbb{R},  \ \ \ l(x,u)\coloneqq l_1(x)+l_2(u),
\]
the functional (\ref{eq1:CFLapDir}) can be rewritten in the Hilbert space framework as
\begin{equation}
\label{eq1:CFLapDir-restatedbis}
J(x;u(\cdot)) \coloneqq{\mathbb{E}}\Bigg[ \int_{0}^{\infty} e^{-\lambda t}
l \big(X(t;x,u(\cdot)), u(t)\big)\, dt \Bigg].
\end{equation}

\subsubsection{HJB equation and verification theorem}
\label{SSSE2:HEATBOUNDARYHJBVT}

Setting $Q\coloneqq \sigma\sigma^*$, the HJB equation associated to the minimization of \eqref{eq1:CFLapDir-restatedbis} is
\begin{equation}
\label{eq2:HJDirichletNeumann}
\displaystyle{\lambda v(x) - \frac{1}{2}\;
\mbox{\rm Tr}\;[QD^2v(x)]
- \left\langle A_Nx,Dv(x) \right\rangle_H- l_1(x)- \inf_{u \in \Lambda} \left\{\left\langle  L_N^{\delta, \varepsilon}  u,D^{G_N^{\delta,\varepsilon}} v(x) \right\rangle_H +l_2(u)  \right\}=0.}
\end{equation}
Since the semigroup $\big\{e^{tA_N}\big\}_{t \ge 0}$ is strongly
continuous and analytic, then by  \cite[Th.\,6.13(c)]{Pazy83} the operator
$e^{tA_N} G_N^{\delta,\varepsilon}$ can be extended to  $\overline{e^{tA_N} G_N^{\delta,\varepsilon}}=G_N^{\delta,\varepsilon}e^{tA_N}\in \call(H)$ for every $t>0$ and
\begin{equation}\label{Assq}
\left|\overline{e^{tA_N} G_N^{\delta,\varepsilon}}\right|_{\call(H)}\leq
Ct^{\frac14 + \varepsilon}, \qquad \forall t > 0.
\end{equation}
Hence, Assumption \ref{hp4:ABQforOU}(i) and (iii) is satisfied with $A=A_N$, $G=G_N^{\delta,\varepsilon}$, and $\beta=\eps+1/4$. {Consequently, recalling \eqref{eq:choicep}, we choose $p>\frac{1}{\frac{3}{4}-\eps}$.}
\smallskip

Now, assume the following.
{\begin{itemize}
\item[(H1)] {$\sigma$ satisfies} Assumption \ref{hp4:ABQforOU}(ii).
\item[(H2)] Conditions (A1) and (A2) of Remark \ref{rem:exist} hold true with $G=G_N^{\delta,\varepsilon}$.
 \item[(H3)] {$\ell_1\in C_b(\R)$, so }$l_1\in C_b(H)$\footnote{{According to  Remark \ref{rm:polgrowth} it is possible to deal with the case when $\ell_1$, and so $l_1$, has polynomial growth.}}. Moreover the map $q\mapsto F_1(q)$, defined by
  $$F_1(q):= \inf_{u \in \Lambda} \left\{\left\langle  L_N^{\delta, \varepsilon} u, q \right\rangle_H +l_2(u)  \right\}, \ \ \ \ \  q\in H,$$ is Lipschitz continuous. These conditions imply that $F_0(x,q)=l_1(x)+F_1(q)$ satisfies condition (A3) of  Remark \ref{rem:exist}.
%  we have
% \begin{equation}\label{asss}
%\overline{e^{sA_N}G_N^{\delta,\varepsilon}}(H)\subseteq Q_{s}^{1/2}(H), \qquad \forall s > 0.
%\end{equation}
%\item[(A3)] The map $t\mapsto |\Gamma_{G_N^{\delta,\varepsilon}}(t)|_{\call(K,H)}$ belongs to $\cali$.
\end{itemize}}
%Under such assumptions, Assumption \ref{hp4:b1VTell} is verified; it follows that,   for every $\gamma\in \calu$, there exists a unique mild solution  $X(\cdot,x,\gamma(\cdot))\in \calh_\calp^{p,loc}(H)$ to \eqref{eq:strongformAN} for every $p\geq 2$.  Defining
%\[
%l\colon H\times \Lambda \to \mathbb{R},  \ \ l(x,\alpha) \coloneqq \int_{{\mathcal{O}}}\beta_1 (x(\xi)) \ud\xi+\int_{{\partial\mathcal{O}}}\beta_2 (\alpha(\xi )) \ud\xi,
%\]
%the functional  (\ref{eq1:CFLapDir}) can be rewritten in the Hilbert space setting as
%\begin{equation}
%\label{eq1:CFLapDir-restatedbis}
%J(x;a(\cdot)) ={\mathbb{E}}\Bigg[ \int_{0}^{+\infty} e^{-\lambda s}
%l (X(s;x,a(\cdot)), \gamma(s)) ds \Bigg].
%\end{equation}
%
%Hence, if $\beta_1,\beta_2$ satisfy proper continuity and growth assumptions guaranteeing that
%$$F_0(x,y,z)\coloneqq   \inf_{a \in \Lambda} \left\{\left\langle  B a,z \right\rangle +l(x,a)  \right\}$$  satisfies Assumptions \ref{hp4:F0ellbis}, we can apply Corollary \ref{corcor} to this problem.
Then, under such assumptions,  by Remark \ref{rem:exist}, for sufficiently large $\lambda >0$ there exists a unique mild solution $v$ to \eqref{eq2:HJDirichletNeumann}. By definition of mild solution, we have  $v\in C^{1,G}_b(H)$. Furthermore,  Assumption \ref{ass:G} is verified through Remark \ref{rem:imppo} in this case. Hence Proposition \ref{lemma1} applies yielding $v\in\cals^{A,G}(H)$ and enabling the application of Theorem \ref{th:ver}. We now discuss the validity of the above assumptions (H1)--(H3).
\begin{itemize}
\item
\emph{On the validity of (H1).}
First of all, we note that in Assumption \ref{hp4:ABQforOU}(ii), we can take $\gamma$ as small as we want; indeed, if this assumption holds true for some $\bar\gamma\in (0,1/2)$, then it holds true also for all $\gamma\in (0,\bar\gamma)$.
By \eqref{diag}, the operator $e^{tA_N}$ is diagonal with respect to the orthonormal basis $\{e_k\}$ with eigenvalues $e^{-t\mu_k}$. Assumption \ref{hp4:ABQforOU}(ii) rewrites as
\begin{equation}\label{cond1}
\int_0^t\left(s^{-2\gamma}\sum_{k\in\N} \left\langle e^{sA}Qe^{sA^*}e_k,e_k\right\rangle_H \right)ds=
\int_0^t\left(s^{-2\gamma} \sum_{k\in \N}e^{-2 \mu_k s}|\sigma e_k|_H^2\right)ds<\infty \ \ \forall t\geq 0.
\end{equation}
Applying Fubini-Tonelli's Theorem and considering \eqref{kj}
% one gets that \eqref{cond1} holds true if and only if
%\begin{equation*}
%\sum_{k\in\N}\int_0^t\left(s^{-2\gamma} e^{-2 \mu_k s}|\sigma e_k|^2\right)ds<+\infty\ \ \ \ \forall t\geq 0,
%\end{equation*}
%{i.e. if and only if
%\begin{equation}\label{cond2}
%\sum_{k\in\N_0}\int_0^t\left(s^{-2\gamma} e^{-2 \mu_k s}|\sigma e_k|^2\right)ds<+\infty \ \ \ \ \forall t\geq 0.
%\end{equation}}
%Now, calling $\Gamma_E$ the Euler's gamma function, i.e.
%$$
%\Gamma_E(\alpha)\coloneqq \int_0^{+\infty} s^{\alpha-1} e^{-s}ds, \ \ \alpha>0,
%$$
%we estimate
%\begin{equation}\label{cond2bis}
%\sum_{{k\in \N_0}}\int_0^t\left(s^{-2\gamma} e^{-2 \mu_k s}|\sigma e_k|^2\right)ds \leq  \Gamma_E(1-2 \gamma)\sum_{{k\in \N_0}} \mu_k^{2\gamma-1}|\sigma e_k|^2.
%\end{equation}
%%with equality when taking the  limit for $t\to+\infty$.
%So, considering \eqref{kj},
we see that \eqref{cond1} holds if
\begin{equation}\label{cond4}
\sum_{{k\in\N_0}}k^{\frac{2(2\gamma-1)}{d}}|\sigma e_k|_H^2<\infty.
\end{equation}
Let $\theta\geq 0$ be such that
\begin{equation}\label{limsup}
\limsup_{k\to\infty}\frac{|\sigma e_k|_H^{2}}{k^{-{2}\theta}}<\infty
\end{equation}
(recall that  $\sigma \in \mathcal{L}(H)$, so $\theta=0$ always verifies \eqref{limsup}).
Considering that $\gamma$ can be taken as small as we want and combining \eqref{cond4} and \eqref{limsup},
we conclude that (H1) holds if we may take in \eqref{limsup}
\begin{equation}\label{limsupbis}
\theta>\frac{1}{2}-\frac{1}{d}.
\end{equation}
In particular,
if $d=1$, then (H1) holds true for all $\sigma \in \mathcal{L}(H)$.
\item
\emph{On the validity of (H2).}
By \eqref{kj}, we have, for $k\in\N$,
$$G_N^{\delta,\varepsilon}e_k =\big(\delta I-A_N\big)^{\frac{1}{4}+\varepsilon} e_k= g_k e_k, \ \ \ \mbox{where} \ g_k \coloneqq\left(\delta+\mu_k\right)^{\frac{1}{4}+\varepsilon.}$$
 The operator $\overline{e^{tA_N}G_N^{\delta,\varepsilon}}$ is diagonal too with respect to $\{e_k\}_{k\in\N}$ and
 \begin{equation}\label{equG}
 \overline{e^{tA_N} G_N^{\delta,\varepsilon}}e_k=e^{-\mu_k t} g_k e_k= e^{-\mu_k t} \left(\delta+\mu_k\right)^{\frac{1}{4}+\varepsilon}e_k, \ \ k\in\N.
 \end{equation}
% It follows that Assumption  \ref{hp4:ABQforOU}(iii) is satisfied if and only if
%\begin{equation}\label{eq4:etAGextend}
%\sup_k \left| e^{-\mu_{k}t}\left(\delta+\mu_k\right)^{\frac{1}{4}+\varepsilon} \right|< + \infty, \ \ \ \forall t>0.
%\end{equation}
%\begin{equation}\label{esg}
% |\overline{e^{sA}G}|_{\call(K,H)}\le C_G[s^{-\beta}\vee 1]e^{a_Gs} \ \ \forall s>0.
%\end{equation}
%above we refer to \cite[Example\,5.3]{FedericoGozzi16}.
%Here we simply recall that to have both (H1) and (H2) satisfied we need to require $d<3$. In particular both (H1) and (H2) are satisfied when $d=1$, $\sigma=I$ and also when $d=2$ and $\sigma$ is a suitable diagonal operator with respect to $\{e_k\}$. } %such that $|\sigma e_k|_H \sim k^{-\eta}$, for suitable $\eta >0$, as $k \to + \infty$.
Assume now further that $\sigma$ is diagonal with respect to
 $\{e_{k}\}_{k\in\N}$ and nondegenerate, i.e.
 $\sigma e_k=\sigma_ke_k$ for every $k\in \N,
 $
 where $\sigma_k> 0$ for every $k\in\N$. Set $q_k\coloneqq \sigma_k^2>0$ for $k\in\N$.
%and assume that $A$, $Q$, and $G$ admit spectral
%decompositions
%$$
%Ae_{n}=-\alpha_{n}e_{n}, \qquad Q e_{n}=q_{n}e_{n},
%\qquad Ge_{n}=g_{n}e_{n}, \ \ \  \ \ \ \forall n\in\mathbb{N},
%$$
%where
%$\alpha_{n}\geq 0$, $g_n\in \R$,  $q_{n}>0$ for all $n\in\N$ and $\alpha_{n}\uparrow+\infty$ as $n\to\infty$.
Then
%$$
% e^{sA_N}Qe^{sA_N^*}e_k=e^{-2s\mu_k}q_ke_k, \ \ \ k\in\N, \ s>0.
%$$
%Note that,   the set $\Sigma_0\coloneqq \{n\in\N: \ \alpha_n=0\}$ is finite\,\footnote{In the quoted
%references  at the beginning of this example it is assumed
%$\alpha_n>0$. Here we let some $\alpha_n$ to be $0$ to cover the case
%% developed in Subsection 5.1.}. Call $\Sigma_0^c=\N\setminus \Sigma_0$.
%In this case
$Q_t$
 is diagonal too. Moreover and $Q_te_0=tq_0e_0$ and
% \red{under the agreement $\frac{1-e^{-2\mu_0t}}{2\mu_0}\coloneqq t$ (recall that $\mu_0=0$)}, we have
{$$
Q_t e_k=
\frac{q_k}{2\mu_k} (1-e^{-2\mu_k t})e_k, \ \ \ \mbox{if} \ k\in\N_0,
\ \ \ \ \ \forall t\geq 0.
$$}
Hence,  with the   agreement $\frac{1-e^{-2\mu_kt}}{2\mu_k}\coloneqq t$ if {$k=0$}, we have
$$
\Gamma_G(t)e_k\coloneqq Q_t^{-1/2}\overline{e^{tA_N}G_N^{\delta,\varepsilon}} e_k
=
\sqrt{
\frac{2\mu_{k}}{(1-e^{-2t\mu_{k}})q_k}
}\;\;
 e^{-\mu_k t} \left(\delta+\mu_k\right)^{\frac{1}{4}+\varepsilon}e_k
%=\sqrt{
%\frac{2\alpha_{k}}{e^{2t\alpha_{k}}-1}
%\cdot \frac{g_k^2}{q_k} }\;\;
%e_n,
\ \ \ \forall k\in\N.
$$
Since $|\Gamma_G(t)|_{\mathcal{L}(H)}=\sup_{k \in \N}\big|\Gamma_G(t)e_k\big|_H$, then,
% This implies that
%$$
%|\Gamma_G(t)|_{\mathcal{L}(H)}=\sup_{k \in \N}|\Gamma_G(t)e_k|=
%\sup_{k \in \N}\sqrt{
%\frac{2\mu_{k} \left(\delta+\mu_k\right)^{\frac{1}{2}+2\varepsilon}}{(e^{2t\mu_{k}}-1)
%q_k}}.
%$$
%So,
 with the agreement that
$\frac{2\mu_{k}}{e^{2t\mu_{k}}-1}
\coloneqq t^{-1}$ if {$k=0$},
conditions (A1) and (A2) of Remark \ref{rem:exist} hold true if and only if
\begin{equation}\label{cond8}
\begin{split}
 \exists \,\eta\in L^1_{loc}([0,+\infty);\R) \  \mbox{bounded in a neighborhood of } +\infty \ \mbox{s.t.}\\
 \sqrt{
\frac{2\mu_{k} \left(\delta+\mu_k\right)^{\frac{1}{2}+2\varepsilon}}{(e^{2t\mu_{k}}-1)
q_k}
}\leq \eta(t), \ \ \ \forall t> 0, \ \forall k\in \N.
\end{split}
\end{equation}
%\begin{equation}\label{cond8}
%\sqrt{
%\frac{2\mu_{k} \left(\delta+\mu_k\right)^{\frac{1}{2}+2\varepsilon}}{(e^{2t\mu_{k}}-1)q_k}
%}\leq \gamma(t), \ \ \ \forall t> 0, \ \forall k\in \N.
%\end{equation}
Assume that
\begin{equation}\label{liminf}
\liminf_{k\to\infty}\frac{q_k}{k^{-2\theta}}>0 \ \ \mbox{for some} \ \theta\geq 0,
\end{equation} and let $k_0\in\N$ and $c_0>0$ be such that $q_k\geq c_0 k^{-2\theta} $ for some $c_0>0$ and every $k\geq k_0$. Considering \eqref{kj}, let $c_1,c_2>0$ and $k_0'\in \N$ be such that
$
c_1k^{\frac{2}{d}} \leq \mu_k \leq c_2k^{\frac{2}{d}}$ for every $k\geq k_0'.$
%Clearly (SF: E' COSI'? \red{FG: OF COURSE. PER QUESTI VALE LA STIMA CON $\eta=t^{-1/2}$})
{Calling $\bar{k}\coloneqq k_0\vee k_0'$ it is clear that,
for a suitable $C_0>0$,
$$
\sup_{k <\bar k}\sqrt{
\frac{2\mu_{k} \left(\delta+\mu_k\right)^{\frac{1}{2}+2\varepsilon}}{(e^{2t\mu_{k}}-1)
q_k}}\le C_0 t^{-1/2}.
$$
Hence, to prove \eqref{cond8} above, we  take $k \ge \bar k$ and we rewrite \eqref{cond8} (up to a constant depending on $c_0,c_1,c_2$) as}
% \begin{equation}\label{cond9}
%\sqrt{
%\frac{2\mu_{k} \left(\delta+\right)^{\frac{1}{2}+2\varepsilon+\theta}}{c_0(e^{2t\mu_{k}}-1)}
%}\leq \gamma(t), \ \ \ \forall t> 0, \ \forall k\geq k_1.
%\end{equation}
%Considering \eqref{kj}
% the above \eqref{cond9} rewrites as
\begin{equation}\label{cond10}
\begin{split}
 \exists \,\eta\in L^1_{loc}([0,+\infty);\R) \  \mbox{bounded in a neighborhood of } +\infty \ \mbox{s.t.}\\
\sqrt{
\frac{{k}^{\frac{2}{d}} \left(\delta+k^{\frac{2}{d}}\right)^{\frac{1}{2}+2\varepsilon}}
{(e^{2tk^{\frac{2}{d}}}-1)k^{-2\theta}}
}\leq \eta(t), \ \ \ \forall t> 0, \ \forall k\geq \bar{k}.
\end{split}
\end{equation}
%Assume that
%\begin{equation}
%\frac{3}{2}+2\varepsilon+\frac{d\theta}{2}\leq 1.
%\label{theta}
%\end{equation}
Noting that ${C_1}\coloneqq \sup_{s>0} \frac{s^{\frac{3}{2}+2\varepsilon+d\theta}}{e^s-1}<+\infty$, we can estimate
$$
\frac{{k}^{\frac{2}{d}} \left(\delta+k^{\frac{2}{d}}\right)^{\frac{1}{2}+2\varepsilon}}
{(e^{2tk^{\frac{2}{d}}}-1)k^{-2\theta}}\leq
\frac{(1+\delta)^{\frac{1}{2}+2\varepsilon}
k^{\frac{2}{d}\left(\frac{3}{2}+2\varepsilon\right)+2\theta}}
{(e^{2tk^{\frac{2}{d}}}-1)}\leq
{C_1} \frac{(1+\delta)^{\frac{1}{2}+2\varepsilon}}{(2t)^{\frac{3}{2}
+2\varepsilon+d\theta}}
\ \ \ \forall k\geq \bar{k}.
$$
Therefore,  (H2) is satisfied whenever \eqref{liminf} holds for some $\theta$ such that $\frac{3}{2}+2\varepsilon+d\theta<2$. As $\varepsilon>0$ can be taken arbitrarily small, we conclude that (H2) can be fulfilled if \eqref{liminf} holds for some $\theta$ such that
\begin{equation}\label{theta2}
\frac{3}{2}+d\theta<2 \ \Longleftrightarrow \ \theta<\frac{1}{2d}.
\end{equation}
\item \emph{On the simultaneous validity of (H1)--(H2).} Looking at \eqref{limsupbis} and \eqref{theta2}, we see that (H1)-(H2) can be simultaneously fulfilled by choosing a suitable $\varepsilon>0$ if $\sigma$ is diagonal with respect to $\{e_k\}_{k\in\N}$ and  \eqref{liminf} is verified for some $\theta\geq 0$ such that
\begin{equation}
\label{theta3}
\frac{1}{2}-\frac{1}{d}<\theta <\frac{1}{2d}.
\end{equation}
These requirements can be fulfilled only for dimension $d\leq 2$.
\item \emph{On the validity of (H3).} This is guaranteed, for instance, if
 $\Lambda$ is bounded,
 $\ell_1$ is continuous and bounded,
$\ell_2$ is measurable.
%\item[-] $|\ell(x_1,u)-\ell(x_2,u)|\leq C_0|x_1-x_2|_H$, for every $x_1,x_2\in H,$ for some $C_0\geq 0$.
\end{itemize}
%Also Corollary \ref{cr4:optimalfeedbackell} applies if
%we know that, in this case the feedback map $\Phi$
%defined in (\ref{eq4:feedbackregularell}) admits a measurable selection
%$\phi:H \to U$ and consider the SDE (CLE)
%\begin{equation}
%\label{eq4:CLEselectionell}
%\begin{itemize}
%\item[(A3)] $l_2$ is strictly convex.
%\end{itemize}
%Then
%is actually single-valued.
% Assume, moreover, that

\subsubsection{Optimal Feedback Controls} \label{FEEDBACKNEUMANN}
In the framework of the previous subsection, we look now at the existence of optimal feedback controls.
\begin{Theorem}
Let (H1)--(H3) of the previous subsection hold. Assume that the multi-valued map
\begin{equation}\label{pphi}
\Psi: H\to \Lambda, \ \ q\mapsto 	\textsl{arg}\!\!\!\!\!\!\min_{u \in \Lambda\ \ \ \ \ } \left\{\left\langle  L_N^{\delta, \varepsilon} u, q \right\rangle_H +l_2(u)  \right\}
\end{equation}
admits a Lipschitz continuous selection $\psi$ and that  $D^{G_N^{\delta, \varepsilon}}v$ is Lipschitz continuous. Set $\phi:=\psi\circ D^{G_N^{\delta, \varepsilon}}v$.
Then the SDE
\begin{equation}
\label{eq4:CLEselectionelleq}
\left \{
\begin{array}{l}
{d} X(t) = \left[A_NX(t)+ G_N^{\delta, \varepsilon}L_N^{\delta, \varepsilon}(\phi (X(t))) \right]\,{d} t +
\sigma\, {d} W(t), \ \ \ \ t\geq 0,\\[5pt]
X(0)=x,
\end{array}
\right.
\end{equation}
admits a unique mild solution $X_\phi(\cdot;x)\in \mathcal{K}_\calp^{1,loc}(H)$ (in the sense of \eqref{eq4:CLEselectionellbis}) admitting a version with continuous trajectories. As a consequence, Corollary \ref{cr4:optimalfeedbackell}(i) applies providing the optimality {of the couple $\big({u_\phi}(\cdot),{X_\phi}(\cdot;x)\big)$, where $u_\phi (t):=\phi(X_\phi(t;x))$ for $t\geq 0$.}
\end{Theorem}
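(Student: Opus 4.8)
The plan is to construct $X_\phi(\cdot;x)$ as the unique fixed point of the integral map associated with \eqref{eq4:CLEselectionelleq}, working in $\mathcal{K}_\calp^{p,T}(H)$ with $p>\frac{1}{3/4-\varepsilon}$ fixed as above, and then to transfer the conclusion to the control problem via Corollary \ref{cr4:optimalfeedbackell}. First I would record that the drift is driven by a genuinely Lipschitz map of the state: setting $b\coloneqq L_N^{\delta,\varepsilon}\circ\phi=L_N^{\delta,\varepsilon}\circ\psi\circ D^{G_N^{\delta,\varepsilon}}v:H\to H$, the map $b$ is globally Lipschitz and of linear growth, being the composition of the Lipschitz maps $D^{G_N^{\delta,\varepsilon}}v$ and $\psi$ with the bounded linear operator $L_N^{\delta,\varepsilon}\in\mathcal{L}(L^2(\partial\calo);H)$. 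In this notation \eqref{eq4:CLEselectionellbis} reads $X(t)=e^{tA_N}x+W^A(t)+\int_0^t\overline{e^{(t-s)A_N}G_N^{\delta,\varepsilon}}\,b(X(s))\,ds$, where $W^A$ is the stochastic convolution; by (H1) and Proposition \ref{prop:SE}, $W^A$ lies in $\mathcal{K}_\calp^{2,loc}(H)$ and admits a continuous version, and being Gaussian it belongs in fact to $\mathcal{K}_\calp^{p,loc}(H)$ for every $p$.

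Next, fix $T>0$ and define, for $Y\in\mathcal{K}_\calp^{p,T}(H)$,
\[
\mathcal{T}(Y)(t)\coloneqq e^{tA_N}x+W^A(t)+\int_0^t\overline{e^{(t-s)A_N}G_N^{\delta,\varepsilon}}\,b(Y(s))\,ds,\qquad t\in[0,T].
\]
Arguing as in the proof of Proposition \ref{prop:SE} (applying Lemma \ref{lemmaDP} with $E=V=L^p(\Omega;H)$, $f(s)=b(Y(s))$ and $g(s)=\overline{e^{sA_N}G_N^{\delta,\varepsilon}}$, whose operator norm is bounded by $C(s^{-\beta}\vee1)$ with $\beta=\frac14+\varepsilon<1$ by \eqref{Assq}), the deterministic convolution is $p$-mean continuous, so $\mathcal{T}$ maps $\mathcal{K}_\calp^{p,T}(H)$ into itself. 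The core estimate is the contraction bound
\[
\big|\mathcal{T}(Y_1)-\mathcal{T}(Y_2)\big|_{\mathcal{K}_\calp^{p,T}(H)}\le \frac{C\,L_b}{1-\beta}\,T^{1-\beta}\,\big|Y_1-Y_2\big|_{\mathcal{K}_\calp^{p,T}(H)},
\]
obtained via \eqref{Assq}, the Lipschitz constant $L_b$ of $b$, and Jensen's inequality with respect to the finite measure $(t-s)^{-\beta}\,ds$ on $[0,t]$. For $T$ small (indeed $T<1$, where the kernel bound is exactly $C(t-s)^{-\beta}$) this is a strict contraction, giving a unique fixed point on a small interval; gluing the solutions on consecutive subintervals of fixed length covers every $[0,T]$ and yields a unique $X_\phi(\cdot;x)\in\mathcal{K}_\calp^{p,loc}(H)\subseteq\mathcal{K}_\calp^{1,loc}(H)$ solving \eqref{eq4:CLEselectionellbis}.

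To obtain the continuous version I would argue pathwise: since $\mathbb{E}\int_0^T|b(X_\phi(s))|_H^p\,ds<\infty$, for $\mathbb{P}$-a.e.\ $\omega$ the path $s\mapsto b(X_\phi(s)(\omega))$ lies in $L^p([0,T];H)$, so Lemma \ref{lemmaDP} (now with $E=K=H$, $V=H$) shows that $t\mapsto\int_0^t\overline{e^{(t-s)A_N}G_N^{\delta,\varepsilon}}\,b(X_\phi(s)(\omega))\,ds$ is continuous; adding the continuous versions of $W^A$ and of $t\mapsto e^{tA_N}x$ gives a continuous version of $X_\phi(\cdot;x)$. Finally, $X_\phi\in\mathcal{K}_\calp^{p,loc}(H)\subseteq\mathcal{M}_\calp^{p,loc}(H)$ together with the linear growth of $\phi$ yields $\int_0^t\mathbb{E}[|u_\phi(s)|_{L^2(\partial\calo)}^p]\,ds<\infty$ for every $t$, while progressive measurability of $u_\phi(s)=\phi(X_\phi(s;x))$ follows from that of $X_\phi$ and continuity of $\phi$; hence $u_\phi(\cdot)\in\calu$. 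Since $v$ is a mild solution with $v\in\mathcal{S}^{A,G}(H)$ (as established just above) and \eqref{eq4:CLEselectionellbis} is solved by $X_\phi$, all hypotheses of Corollary \ref{cr4:optimalfeedbackell} are met and the optimality of $\big(u_\phi(\cdot),X_\phi(\cdot;x)\big)$ follows.

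I expect the main obstacle to be the singularity of the kernel $\overline{e^{sA_N}G_N^{\delta,\varepsilon}}$ at $s=0$: it is precisely the blow-up rate $s^{-\beta}$ with $\beta=\frac14+\varepsilon$ in \eqref{Assq} that forces the choice $p>\frac{1}{1-\beta}$ and dictates working in $\mathcal{K}_\calp^{p,T}(H)$ rather than $\mathcal{K}_\calp^{1,T}(H)$, both so that Jensen's inequality closes the contraction estimate and so that one has enough pathwise $L^p$-integrability to run the continuity argument through Lemma \ref{lemmaDP}. The remaining points (the mapping property of $\mathcal{T}$, the gluing of local solutions, and the membership $u_\phi(\cdot)\in\calu$) are routine once this integrability bookkeeping is in place.
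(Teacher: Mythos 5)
Your proposal is correct and follows essentially the same route as the paper: the authors simply observe that $\phi$ is Lipschitz and invoke the classical contraction/fixed-point argument (as in \cite[Theorem 7.5]{DaPratoZabczyk14}), modified to handle the singular kernel $\overline{e^{sA_N}G_N^{\delta,\varepsilon}}$ via the bound \eqref{Assq}, which is exactly the scheme you carry out in detail. Your spelled-out contraction estimate in $\mathcal{K}_\calp^{p,T}(H)$, the gluing over subintervals, the pathwise continuity via Lemma \ref{lemmaDP}, and the check that $u_\phi(\cdot)\in\calu$ are all the steps the paper leaves implicit, and they are sound.
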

\begin{proof}
By the assumptions, the  map $\phi$
is Lipschitz continuous too. Then the proof follows the classical fixed point arguments as in standard results of existence and uniqueness of SDEs in infinite dimension, see e.g. \cite[Theorem 7.5]{DaPratoZabczyk14}. Here we only need to take care of dealing with $\overline{e^{sA_N}G_N^{\delta,\varepsilon}}$ in place of $e^{sA_N}$ {in the convolution term} and use \eqref{esg} with $G=G_N^{\delta,\varepsilon}$.
\end{proof}

The assumption that $\Psi$ defined in \eqref{pphi} admits a Lipschitz continuous selection $\psi$ is guaranteed, for example, if
{$\Lambda=U$, $l_2: U\to \R$ is strictly convex,
$$
\lim_{|u|_U\to+\infty} \frac{l_2(u)}{|u|_U}=+\infty,
$$  $l_2$ is Fr\'echet differentiable, and $D l_2$ has Lipschitz continuous inverse.}
Indeed, in this case the infimum in \eqref{pphi} is uniquely achieved (hence, $\Psi$ is single-valued)  at
$$u^*(q)= (Dl_2)^{-1}\left(\big(L_N^{\delta, \varepsilon}\big)^*q\right), \ \ \ q\in H.$$
%(see, e.g., CITARE CANNARSA SINESTRARI Theorem A.1.3)}.
 Hence, if we are able to check that $D^{G_N^{\delta, \varepsilon}}v$ is Lipschitz continuous,  we can then apply Corollary \ref{cr4:optimalfeedbackell}(i) in its strongest form to get uniqueness of the optimal control constructed.

 On the other hand, checking that $D^{G_N^{\delta, \varepsilon}}v$ is Lipschitz continuous might be, in general, a very difficult task\footnote{This can be done assuming more regularity of $\ell_1$ --- hence of $l_1$ --- and proving a suitable $C^2$ property of $v$. See, e.g., the approach used in \cite{GozziRouy96} or in \cite{GozziMasiero12}.}, whereas mere continuity of $D^{G_N^{\delta, \varepsilon}}v$ is a condition already ``contained'' in the definition of mild solution  to \eqref{eq2:HJDirichletNeumann}.
Hence,  it would be meaningful to provide a Peano type result
 \footnote{This is not straightforward: in infinite dimension Peano's Theorem fails in general (see \cite{Godunov}).}
 of existence of mild solutions to CLE \eqref{eq4:CLEselectionelleq}. This seems possible when a selection $\psi$ of $\Psi$ in \eqref{pphi} is  known to be only continuous and bounded on bounded sets, as
 \begin{itemize}
 \item[(i)]  the semigroup $\{{e^{tA_N}}\}_{t\geq 0}$  is compact;
 \item[(ii)] as  $D^{G_N^{\delta, \varepsilon}}v$ is continuous and bounded by construction,  the map $\phi:=\psi\circ D^{G_N^{\delta, \varepsilon}}v$ is continuous and bounded.
\end{itemize}
 Indeed, in such a framework, it seems possible to use the methods of \cite[Prop.\,3]{ChojnowskaGoldys95} (see also \cite{GatarekGoldys97}), passing through the use of the so-called Skorohod representation theorem, to construct
martingale solutions to \eqref{eq4:CLEselectionelleq}; hence, to construct optimal feedback controls in the weak formulation.
% Here, we only need to take care of dealing with $\overline{e^{sA_N}G_N^{\delta,\varepsilon}}$ in place of $e^{sA_N}$ {in the convolution term}.
%\begin{Theorem}\label{th:pean0}
%Let (H1)--(H3) of the previous subsection hold. Assume that the multi-valued map
%\eqref{pphi}
%admits a  continuous selection bounded on bounded sets and  set $\phi:=\psi\circ D^{G_N^{\delta, \varepsilon}}v$.
%Then
%\eqref{eq4:CLEselectionelleq}
%has a martingale solution.
%% $\overline{X_\phi}(\cdot;x)$ in some filtered probability space
%%$\left(\overline{\Omega},\overline{\mathcal{F}},\{\overline{\mathcal{F}_t}\}_{t\geq 0}, \overline{\P} \right)$
%%and for some cylindrical Brownian motion $\overline{W}$ defined on it.
%As a consequence, Corollary \ref{cr4:optimalfeedbackell}(ii) applies providing the optimality {of the couple $(\overline{u_\phi}(\cdot),\overline{X_\phi}(\cdot;x))$, where $\overline{u}_\phi (s):={\phi}(\overline{X_\phi}(s;x))$.}
%\end{Theorem}

\begin{Remark}\label{rem:peano}
%The method to prove the existence of martingale solutions to \eqref{eq4:CLEselectionelleq} relies on the use of the so-called Skorohod representation theorem. This explains the need of dealing with martingale solutions, i.e. of constructing the solution on a probability space $\overline{\Omega}$ which is not the original one of the definition of the problem.
In the specific case we are handling, where the diffusion term is just additive in the equation, {a way to
%go around this and to
construct the solution in the original probability space $\Omega$} might consist in constructing a pathwise solution dealing with a parameterized family of deterministic problems with parameter $\omega\in \Omega$ (see \cite{BensoussanTemam73},  \cite[Sections 14.2 and 15.2]{DaPratoZabczyk96}, \cite{FlandoliSchmalfuss99}, \cite{MikuleviciusRozovski05}). {Once this is done, the} problem is to prove that the family of solutions constructed $\omega$ by $\omega$ admits an adapted selection. The existence of a selection measurable with respect to $\mathcal{F}$ can be obtained using measurable selection {theorems} (see again \cite{BensoussanTemam73}); proving that this selection is also adapted is a problematic task, {which is still open. In the case when one knows ex ante that the pathwise solution is unique for a.e. $\omega \in \Omega$, then F. Flandoli (personal communication) showed us how to accomplish this task. Unfortunately, in our case, the uniqueness of the solutions of the deterministic equations for a.e. $\omega \in \Omega$ only holds when the properties of the coefficients allow to find directly mild solutions to SDE \eqref{eq4:CLEselectionelleq}.}
 \end{Remark}

\subsection{Stochastic optimal control with delay in the control variable}\label{sec:delay}

Here we consider an infinite horizon version of a control problem studied in \cite{GozziMasiero12,GozziMasiero16b}.
%Let $\left(\Omega,\mathcal{F},\{\mathcal{ F}_t\}_{t\geq 0},\P \right)$
%be a filtered probability space and c
Consider the following linear controlled one dimensional SDE:
\begin{equation}
\begin{cases}
dy(t)  =\Big[a_0 y(t) +b_0 u(t)  +\dis\int_{-d}^0b_1(\xi)u(t+\xi)d\xi\Big]\,dt+\sigma_0 \,dW(t)
,\text{ \ \ \ }t\ge 0, \smallskip\\
y(0)  =y_0,\ \ \
u(\xi)=u_0(\xi), \quad \xi \in [-d,0),
\end{cases}  \label{eq-contr-ritINTRO}
\end{equation}
where \begin{itemize}
\item $W=\{W(t)\}_{t\geq 0}$ is a standard one dimensional Brownian motion;
\item $a_0,b_0,\sigma_0\in \R$, $\sigma_0 >0$; \item $d>0$ represents the maximum delay
the control takes to affect the system;
\item $b_1(\cdot)$ is a (real-valued)  function
weighting the aftereffects of the control on the system; we consider here the case of distributed delay, i.e. when $b_1\in L^2([-d,0],\R)$. 
\end{itemize}

%while a more difficult case which we leave for further research is when $b_1$ is a measure, e.g. a Dirac delta in $-d$ (``pointwise delay'').

The initial data are the initial state $y_0$ and the past history $u_0$ of the control.
The control $u$ takes values in a closed subset $\Lambda\subseteq  U:=\R$ and belongs to $\mathcal{U}_2$ (defined by \eqref{up} with $p=2$).

Such kind of equations (even in a deterministic framework) have been used  to model the effect of advertising on
the sales of a product  \cite{GozziMarinelli06,GozziMarinelliSavin09,  FedericoTacconi14}}, the effect of investments with time to build on growth \cite{FabbriGozzi08, Federico15}, to model optimal portfolio problems with execution delay
\cite{BruderPham09},  to model the interaction of drugs with tumor cells
\cite[p.\,17]{KolmanovskiShaiket96}.

%In many applied cases (like the ones quoted above)
Denoting by $y^{y_0,u_0, u(\cdot)}$ the unique solution to \eqref{eq-contr-ritINTRO}, the goal of the problem is to minimize, over all control strategies in $\mathcal{U}_2$, the following objective functional
\begin{equation}\label{costoconcretoINTRO}
\E \left[\int_0^{\infty} e^{-\lambda t} \left(\ell_0(y^{y_0,u_0,u(\cdot)}(t))+\ell_1(u(t))\right)\,dt\right],
\end{equation}
where $\ell_0:\R\rightarrow \R$ and $\ell_1:\Lambda\rightarrow \R$ are measurable and bounded from below.
It is important to note that here $\ell_0$ and $\ell_1$ do not depend on the past of the state and/or control. This is a very common feature of {many} applied problems.

A standard way
to approach these delayed control problems, introduced in \cite{VinterKwong81} for the deterministic case and extended to the stochastic case  in  \cite{GozziMarinelli06}, is to reformulate them as  equivalent infinite dimensional control problems without delay\footnote{It must be noted that, under suitable restrictions on the data, one can treat (stochastic) optimal control
problems with delay avoiding to look at them as infinite dimensional systems (see \cite{FedericoTankov15}).
However, this is possible only in  very special cases,
 leaving out a lot of of concrete applications.}.
The details are given in \cite{GozziMasiero12} for the finite horizon case, which is completely similar to the infinite horizon case, with the obvious changes (see also \cite{FedericoTacconi14} for the infinite horizon case in a deterministic framework with a different embedding space).
Consider  the Hilbert space $H:=\R \times L^2([-d,0],\R)$, set $b:=(b_0,b_1(\cdot))\in H$, and assume, without loss of generality,  $|b|_H=1$. The state equation \myref{eq-contr-ritINTRO} is rephrased in $H$ as a linear SDE  with state variable $X=(X_0,X_1(\cdot))$ as follows:
\begin{equation}
\begin{cases}
dX(t)  =\big[AX(t) +Gu(t) \big]\,dt+\sigma dW(t)
,\text{ \ \ \ }t\ge 0, \\
X(0)={x}=(x_0,x_1(\cdot)),
\end{cases}   \label{eq-astrINTRO}
\end{equation}
where
$$ \mathcal{D}(A)=\big\{(x_0,x_1(\cdot))\in \R\times W^{1,2}([-d,0]): \ x_1(-d)=0\big\}, \ \ \ Ax=\left(a_0x_0+x_1(0),\;-x_1'\right);$$
$$
G:\R\rightarrow H,\quad G(u)= u{b};
\qquad
\sigma:\R\rightarrow H,\quad \sigma(z)=(\sigma_0 z, 0);
$$
 and
the initial datum ${x}$ is defined as
$$x_0:=y_0, \ \ \ x_1(\xi):=\int_{-d}^\xi b_1(\varsigma)u_0(\varsigma-\xi)d\varsigma, \  \ \xi \in [-d,0].$$
It is well known that  $A$ is the generator of a $C_0$-semigroup of linear bounded operators on $H$.
Note that the infinite dimensional datum $x_1(\cdot)$ depends on the ``initial past'' $u_0(\cdot)$ of the control. It turns out that $X_0(t;x,u(\cdot))=y^{y_0,u_0,u(\cdot)}$, so
 \eqref{costoconcretoINTRO} is rewritten as
\begin{equation}\label{ppl1}
J(x;u(\cdot)):= \E\left[ \int_0^{\infty} e^{-\lambda t} \big(\ell_0(X_0(t;x,u(\cdot))+\ell_1(u(t))\big)\,dt\right].
\end{equation}
%Hence, the value function of the equivalent infinite dimensional problem is defined as
%$$
%V(x):=\inf_{u(\cdot)\in \mathcal{U}_2}
%\E \left[\int_0^{+\infty} \left[\ell(X_0(s))+\ell_1(u(s))\right] ds
%\right]
%$$
Setting $Q:=\sigma\sigma^*$, the HJB equation associated to the minimization of \eqref{ppl1} is
\begin{equation}\label{HJBINTRO}
\lambda v(x)
=
\frac{1}{2}\mbox{Tr}\left[QD^2v(x)
\right]+ \< Ax,D v(x)\>_H
+ \inf_{u\in \Lambda} \Bigg\{u
%\frac{\partial v}{\partial b}
D^Gv(x)+\ell_1(u)\Bigg\} +\ell_0(x_0), \ \ \ x\in H,
\end{equation}
%\begin{equation}\label{HJBINTRO}
%\lambda v(x)
%=
%\frac{1}{2}\mbox{Tr}\left[QD^2v(x)
%\right]+ \< Ax,D v(x)\>_H
%+ F_{0} (v_{x_0}(x)) +\ell_0(x_0), \ \ \ x\in H,
%\end{equation}
%where
%\begin{equation}\label{HminINTRO}
%F_{0} (q):=
%\inf_{u\in U} F_{0,CV}(q ;u),
%%=\inf_{u\in \Lambda} \{\<p,Bu\>_\calh+\ell_1(u) \}.
%\ \ \ \
%F_{0,CV}(q;u):=qu+\ell_1(u), \ \ \ q\in \R, \ u\in\Lambda.
%%=\<B^*p,u\>_\R +\ell_1(u).
%\end{equation}
Notice that  $D^G=\frac{\partial}{\partial {b}}$, where the latter symbol denotes the directional derivative along the direction $b$. So,  the nice feature of the equation above is that the nonlinearity on the gradient only involves the directional derivative $D^G$.
%where by $\mathbf{0}$ we have denoted the null element of $L^2([-d,0];\R)$.
Note also that here we do not have the so called \emph{structural condition}  $G(\R)\subseteq \sigma (\R)$; this prevents the use of techniques based on Backward SDEs (see, e.g., \cite{FuTe-ell}) to tackle the problem.
%\footnote{More general results could be proved for such equation: for example, using the technique of \cite{Cerrai01-40} one can deal with the case when $F_0$ is only locally Lipschitz continuous. Also the case of unbounded $\bar \ell_0$ can be treated, see on this Remark \ref{rm:polgrowth}.}.
%The mild solution (defined through the usual integral form, see Definition \ref{df4:solmildHJBell}) belongs to $D(\cala)\cap C_b^{1,G}(H)$.

Now we check if the assumptions of our main result Theorem \ref{th:ver} are verified. First of all, it is easy to check that
Assumption \ref{hp4:ABQforOU} and Assumption \ref{ass:Gbis} hold.
%clearly verified in this case. Hence, as
%also Assumption \ref{ass:Gbis} holds, 
The third assumption, i.e.  the existence of a mild solution $v\in\mathcal{S}^{A,G}(H)$ to \myref{HJBINTRO} needs to be discussed. 

In \cite{GozziMasiero12}, the authors study a finite horizon optimal control problem with the same state equation \eqref{eq-contr-ritINTRO} and a similar objective functional. Exploiting only partial smoothing properties of {the transition semigroup associated to the state equation \eqref{eq-astrINTRO} with null control,}
%generated by the operator $A$, 
the authors are able to provide, under suitable reasonable assumptions on the data, existence and uniqueness results for the parabolic HJB equation associated to the control problem.

{We believe that the approach of \cite{GozziMasiero12} can be adapted to our infinite horizon case, getting a mild solution $v\in \mathcal{D}(\mathcal{A}^{(0)})\cap C^{1,G}_b(H)$
to HJB  \eqref{HJBINTRO}.
Then, to apply our theory one should prove that such function $v$ is Lipschitz continuous on compact sets, which enables to apply Proposition \ref{lemma2} to get $v\in\mathcal{S}^{A,G}(H)$. To  get this goal one can  proceed as in
\cite{GozziMasiero12} by assuming more regularity on the data of the problem.
More precisely, assuming that $l_0\in C^1_b(\R)$ and that
the Hamiltonian $p\mapsto \inf_{u\in \Lambda} \left\{u p+\ell_1(u)\right\}$ is differentiable with Lipschitz continuous derivative,
\cite{GozziMasiero12} proves that the mild solution $v\in C^1_b(H)$.
This fact, in particular, implies the required Lipschitz continuity of $v$.
In \cite{GozziMasiero16b} the authors also provide a verification theorem for their finite horizon problem. They  use an approximation procedure of the solution of the HJB equation, which our results allow to avoid here.
}

\appendix
\section{Appendix}\label{sec:appendix}
Recall that, given $G\in \call_u(K,H)$, the pseudo-inverse $G^{-1}:\calr(G)\to\cald(G)$ is defined as the operator that associates to each $h\in \calr(G)$ the element of $G^{-1}(\{h\})$ having minimum norm.\footnote{Existence and uniqueness of such an element follows from the fact that $G$ is a closed operator and applying the results of \cite[Sec.\,II.4.29, p.\,74]{DunfordSchwartz58-I}). } Note that
%, as $\calr(G)$ is closed and $G$ is closed, by the closed graph theorem $G^{-1}$ is bounded. Also,
$G^{-1}G:\cald (G)\to \cald(G)$ is bounded, so it can be extended to a bounded operator  $\overline{G^{-1}G}\in \call(K)$.
{\begin{Lemma}\label{lemma:new} We have
\begin{equation}\label{kkk}
\left\langle D^Gf(x), \overline{G^{-1}G}k\right\rangle_K=\left\langle D^Gf(x), k\right\rangle_K, \ \ \ \forall k\in K, \; {\forall x \in H}.
\end{equation}
\end{Lemma}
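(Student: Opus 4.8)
The plan is to identify the bounded operator $\overline{G^{-1}G}$ with the orthogonal projection of $K$ onto $(\ker G)^\perp$, and then to observe that $D^Gf(x)$ already lies in $(\ker G)^\perp$; the identity \eqref{kkk} follows at once from self-adjointness of the projection.

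First I would analyze $\overline{G^{-1}G}$. Since $G$ is closed, its kernel $N:=\{k\in\cald(G):Gk=0\}$ is a closed subspace of $K$; let $P\in\call(K)$ denote the orthogonal projection onto $N^\perp$. For $k\in\cald(G)$ the fiber $G^{-1}(\{Gk\})$ equals the affine set $k+N$, and by definition $G^{-1}Gk$ is its element of minimal norm. Decomposing $k=Pk+(I-P)k$ with $Pk\in N^\perp$ and $(I-P)k\in N$, one sees that the minimal-norm element of $k+N$ is exactly the component of $k$ orthogonal to $N$, i.e. $G^{-1}Gk=Pk$ for every $k\in\cald(G)$. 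As $P$ is bounded and $\cald(G)$ is dense in $K$, extending by continuity gives $\overline{G^{-1}G}=P$ on all of $K$.

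Next I would show $D^Gf(x)\in N^\perp$. Fix any $k_0\in N$, so that $Gk_0=0$. Using the $G$-Gateaux differentiability recalled in \eqref{G-direc}, and the fact that $Gk_0=0$ forces $f(x+tGk_0)=f(x)$ for all $t$, we obtain
\begin{equation*}
\langle D^Gf(x),k_0\rangle_K=\lim_{t\to0}\frac{f(x+tGk_0)-f(x)}{t}=\lim_{t\to0}\frac{f(x)-f(x)}{t}=0.
\end{equation*}
Since $k_0\in N$ is arbitrary, $D^Gf(x)\perp N$, hence $D^Gf(x)\in N^\perp$ and therefore $PD^Gf(x)=D^Gf(x)$.

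Finally, combining the two steps and using that $P$ is self-adjoint as an orthogonal projection,
\begin{equation*}
\langle D^Gf(x),\overline{G^{-1}G}k\rangle_K=\langle D^Gf(x),Pk\rangle_K=\langle PD^Gf(x),k\rangle_K=\langle D^Gf(x),k\rangle_K,\qquad\forall k\in K,
\end{equation*}
which is exactly \eqref{kkk}. I expect the only genuinely delicate point to be the first step: one must verify carefully that the minimal-norm selection defining the pseudo-inverse coincides with orthogonal projection onto $N^\perp$, and that this identification is preserved under the closure to all of $K$. The orthogonality $D^Gf(x)\perp\ker G$ is then an immediate consequence of the definition of the $G$-derivative applied to the directions annihilated by $G$.
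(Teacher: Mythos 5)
Your proof is correct, but it follows a different route from the paper's. The paper's argument is a two-line computation: for $k\in\cald(G)$ one has $G\,\overline{G^{-1}G}k=GG^{-1}Gk=Gk$, so by the Gateaux characterization of the $G$-derivative (Remark \ref{rem:mmn} and \eqref{G-direc}) both sides of \eqref{kkk} equal $\lim_{s\to 0}\frac{f(x+sGk)-f(x)}{s}$; the case $k\notin\cald(G)$ is then handled by density of $\cald(G)$ and boundedness of $\overline{G^{-1}G}$. You instead identify $\overline{G^{-1}G}$ as the orthogonal projection $P$ onto $(\ker G)^{\perp}$ and show $D^Gf(x)\in(\ker G)^{\perp}$, concluding by self-adjointness of $P$. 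Both rest on the same underlying fact --- the $G$-gradient cannot distinguish directions that $G$ annihilates --- but your version buys a sharper structural statement (the explicit form of $\overline{G^{-1}G}$ and the location of $D^Gf(x)$), at the cost of the extra verification you rightly flag: that the minimal-norm selection defining the pseudo-inverse is the projection onto $(\ker G)^{\perp}$. That step is sound because $\ker G$ is closed ($G$ being closed) and contained in $\cald(G)$, so the fiber $k+\ker G$ is a closed affine subspace whose minimal-norm element $Pk=k-P_{\ker G}k$ indeed lies in $\cald(G)$; the paper's proof avoids this entirely by using only the algebraic identity $GG^{-1}G=G$ on $\cald(G)$. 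Note finally that, like the paper, you are implicitly assuming $f$ is $G$-differentiable at $x$, which is the intended (though unstated) hypothesis of the lemma.
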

\begin{proof}
Assume first that $k\in\cald(G)$. In this case $GG^{-1}Gk=Gk$. Then, using Remark \ref{rem:mmn}, we write
\begin{align*}
\left\langle D^Gf(x), \overline{G^{-1}G}k\right\rangle_K&=
\lim_{s\to 0} \frac{f\big(x+sG\overline{G^{-1}G}k\big)-f(x)}{s}\\
&=
\lim_{s\to 0} \frac{f\big(x+sGk\big)-f(x)}{s}\\
&=\left\langle D^Gf(x), k\right\rangle_K, \ \ \  \; {\forall x \in H}.
\end{align*}
If $k\notin\cald(G)$, we can take a sequence $\{k_n\}\subseteq \cald(G)$ converging to $k$. Considering \eqref{kkk} on $k_n$ and passing to the limit the claim follows taking into account that $\overline{G^{-1}G}$ is bounded.
\end{proof}}

\begin{Assumption}\label{ass:G}
The operator $G\in\call_u(K,H)$
is such that  {for every $k\in K$
\begin{itemize}
\item[(i)] there exists $\varepsilon>0$ such that $\left\{\dis\int_0^t\overline{e^{sA}G}k\, ds
%\in \calr(G)
\right\}_{t\in(0,\varepsilon)}\subseteq \calr(G)$;
% the following.
%\begin{enumerate}[(i)]
%\item $\left\{\int_0^t\overline{e^{sA}G}k ds
%%\in \calr(G)
%\right\}_{t\in(0,\varepsilon)}\subset \calr(G)$ for sufficiently small $\varepsilon>0$.
%\item $\left\{ G^{-1} \frac{1}{t}\int_0^t\overline{e^{sA}G}k ds\right\}_{t\in(0,\varepsilon)}$ is bounded for sufficiently small $\varepsilon>0$; \red{RIDONDANTE: CONTENUTA NELLA SUCCESSIVA}
\item[(ii)]
$G^{-1} \left(\frac{1}{t}\dis\int_0^t\overline{e^{sA}G}k\, ds\right)\to \overline{G^{-1}G}k$, as $t\to 0^+$.
\end{itemize}
}
\end{Assumption}
\begin{Remark}\label{rem:imppo}
Note that $\dis\int_0^te^{sA}h\, ds\in \cald(A)$ for every  $t>0$ and $h\in H$.
% Indeed, by Remark \ref{Rem:ext}, we have
% $$
%\int_0^r\overline{e^{sA}G}k ds= \int
% $$
%
%
So, in view of the fact that $\overline{G^{-1}G}$ is bounded, %closed \blu{NON VEDO DOVE SI USA CHE $G$ E' CHIUSO: SE MAI SI USA CHE $G^{-1}G$ E' LIMITATO},
Assumption \ref{ass:G} is verified, in particular, if $K=H$,  $\cald(A)\subseteq \cald(G)$ and, for sufficiently small $\varepsilon>0$,
$$
G\int_0^te^{sA}h ds = \int_0^t\overline{e^{sA}G}h ds, \ \ \ \forall t\in(0,\varepsilon), \ \forall h\in H.
$$ This applies, e.g., to the case when $A$ is dissipative and generates an analytic semigroup,  and $G=(\delta I-A)^\beta$ with $\delta> 0$ and $\beta\in (0,1)$ (see the example of Section \ref{SS:NEUMANN}).
\end{Remark}

\begin{Proposition}\label{lemma1}
Let Assumption \ref{ass:G} holds. Then $\mathcal{S}^{A,G}(H)=C^{1,G}_b(H)$.
\end{Proposition}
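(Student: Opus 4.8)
The inclusion $\mathcal{S}^{A,G}(H)\subseteq C^{1,G}_b(H)$ is immediate from the definition of $\mathcal{S}^{A,G}(H)$, so the entire content is the reverse inclusion: I must show that every $\varphi\in C^{1,G}_b(H)$ satisfies the defining limit of $\mathcal{S}^{A,G}(H)$. Fix therefore $\varphi\in C^{1,G}_b(H)$, $k\in K$, and $z\in C(\R^+;H)$, and set $w(t):=\int_0^t\overline{e^{sA}G}k\,ds$. By Lemma \ref{lemmaDP} the path $w$ is continuous with $w(0)=0$, so $w(t)\to0$ in $H$. By Assumption \ref{ass:G}(i), $w(t)\in\calr(G)$ for all small $t$, hence $k_t:=G^{-1}w(t)\in\cald(G)$ is well defined and $Gk_t=w(t)$. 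Since the pseudo-inverse is positively homogeneous, $G^{-1}\!\big(\tfrac1t w(t)\big)=\tfrac1t k_t$, so Assumption \ref{ass:G}(ii) reads $\tfrac1t k_t\to\overline{G^{-1}G}k$ in $K$ as $t\to0^+$; in particular $|k_t|_K/t$ stays bounded and $k_t\to0$.

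The plan is to split the difference quotient along the $G$-direction $k_t$. Writing the $G$-differentiability remainder as $R(x,h):=\varphi(x+Gh)-\varphi(x)-\langle D^G\varphi(x),h\rangle_K$, I decompose
\begin{equation*}
\frac{\varphi\big(z(t)+w(t)\big)-\varphi(z(t))}{t}
=\frac{R(z(t),k_t)}{t}+\Big\langle D^G\varphi(z(t)),\tfrac1t k_t\Big\rangle_K.
\end{equation*}
For the second term I use that $D^G\varphi\in C_b(H,K)$ and $z(t)\to z(0)$, so $D^G\varphi(z(t))\to D^G\varphi(z(0))$, together with $\tfrac1t k_t\to\overline{G^{-1}G}k$; the inner product then converges to $\langle D^G\varphi(z(0)),\overline{G^{-1}G}k\rangle_K$, which by Lemma \ref{lemma:new} equals the target value $\langle D^G\varphi(z(0)),k\rangle_K$.

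The heart of the argument is to show $R(z(t),k_t)/t\to 0$, and here lies the main obstacle: ordinary $G$-differentiability controls $R(x,h)/|h|_K$ only for a fixed base point $x$, whereas here the base point $z(t)$ moves as $t\to0$. To obtain the needed uniformity I exploit the continuity (not merely the existence) of $D^G\varphi$ through a fundamental-theorem-of-calculus argument along the segment. Since $k_t\in\cald(G)$, Remark \ref{rmGder} shows that $s\mapsto\varphi(z(t)+sGk_t)$ is differentiable on $[0,1]$ with derivative $\langle D^G\varphi(z(t)+sGk_t),k_t\rangle_K$, which is continuous in $s$; hence
\begin{equation*}
R(z(t),k_t)=\int_0^1\big\langle D^G\varphi(z(t)+sGk_t)-D^G\varphi(z(t)),\,k_t\big\rangle_K\,ds,
\end{equation*}
whence
\begin{equation*}
\frac{|R(z(t),k_t)|}{t}\le \frac{|k_t|_K}{t}\,\sup_{s\in[0,1]}\big|D^G\varphi(z(t)+sGk_t)-D^G\varphi(z(t))\big|_K.
\end{equation*}
As $t\to0^+$ both $z(t)$ and $z(t)+sGk_t$ converge to $z(0)$ uniformly in $s\in[0,1]$ (because $Gk_t=w(t)\to0$), so continuity of $D^G\varphi$ at $z(0)$ forces the supremum to $0$, while $|k_t|_K/t$ remains bounded. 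This yields $R(z(t),k_t)/t\to0$, and combined with the previous paragraph it gives the defining limit of $\mathcal{S}^{A,G}(H)$, completing the identification $\mathcal{S}^{A,G}(H)=C^{1,G}_b(H)$.
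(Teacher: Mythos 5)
Your proof is correct and follows essentially the same route as the paper's: the same splitting of the difference quotient into a remainder term plus $\langle D^G\varphi(z(t)),k_t/t\rangle_K$, the same fundamental-theorem-of-calculus representation of the remainder along the segment, and the same final appeal to Lemma \ref{lemma:new} to replace $\overline{G^{-1}G}k$ by $k$. The only (harmless) cosmetic difference is that you invoke continuity of $D^G\varphi$ at $z(0)$ directly, where the paper phrases the same uniformity via compactness of $\{z(t)\}_{t\in(0,\varepsilon)}$.
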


\begin{proof}
{Fix $k\in K$, $z\in C(\R^+;H)$ and let $\varepsilon>0$ be as in Assumption \ref{ass:G}(i). }
%Let $G^{-1}:\calr (G)\subseteq H\to \cald(G)$ be the pseudo-inverse of $G$.
 Noting that $GG^{-1}h=h$ for every $h\in \calr(G)$,
 by Assumption \ref{ass:G}(i) we can write
\begin{equation}\label{ass1}
\int_0^t\overline{e^{sA}G}k\, ds= G k(t), \ \ \mbox{where} \ \
k(t):=G^{-1}\int_0^t\overline{e^{sA}G}k\, ds, \ \ \ \forall t\in(0,\varepsilon).
\end{equation}
Moreover, by Assumption \ref{ass:G}(ii), we have
%and
\begin{equation}\label{ass3}
\frac{k(t)}{t}\ \stackrel{t\to 0^+}{\longrightarrow}\  \overline{G^{-1}G}k.
\end{equation}
%In particular
%\begin{equation}\label{ass2}
%\sup_{t\in(0,\varepsilon)}\frac{|k(t)|_K}{t}<+\infty.
%\end{equation}
{Fix now $t\in(0,\varepsilon)$.} Using \eqref{ass1} we write
\begin{align}\label{aa4bis}
\frac{\varphi\left( z(t)+\dis\int_0^t\overline{e^{sA}G}k \,ds\right)- \varphi(z(t))}{t}
=&\,\,
\frac{\varphi\big( z(t)+Gk(t)\big)-\varphi(z(t))-\<D^Gf(x(t)),k(t)\>_K}{t}
%|k(t)|_K}\cdot %\frac{|k(t)|_K}{t}
\nonumber\\&
+
\<D^G\varphi(z(t)),\frac{k(t)}{t}\>_K.
\end{align}
Mean value theorem applied to the function 
$[0,1]\to \R, \ \xi\mapsto f\left(x(t)+\xi Gk{(t)}\right)$ yields {(see also Remark \ref{rem:mmn})}
\begin{align*}
\varphi\big(z(t)+Gk(t)\big)-\varphi(z(t))&={\int_0^1 \frac{d}{d\xi} \varphi\big(z(t)+\xi G k(t)\big)d\xi}\\
&{=\int_0^1 \lim_{{\eta}\to 0} \frac{\varphi\big(z(t)+(\xi+{\eta})Gk(t)\big)-\varphi\big(z(t)+\xi G k(t)\big)}{{\eta}}d\xi}\\
&=\int_0^1\left\langle D^G\varphi\big(z(t)+\xi Gk(t)\big),k(t)\right\rangle_K d\xi.
\end{align*}
 Hence, \eqref{aa4bis} rewrites as
 \begin{align}\label{aa4}
\frac{\varphi\left( z(t)+\dis\int_0^t\overline{e^{sA}G}k ds\right)- \varphi(z(t))}{t}
=&
\int_0^1\< D^G\varphi\big(z(t)+\xi Gk(t)\big)-D^G\varphi(z(t)),\frac{k(t)}{t}\>_Kd\xi 
%{|k(t)|_K}\cdot \frac{|k(t)|_K}{t}
\nonumber\\&
+
\<D^G\varphi(z(t)),\frac{k(t)}{t}\>_K.
\end{align}
%there exists $\xi\in[0,1]$ such that
%$$f(x+Gk)-f(x)=\left\langle D^Gf(x+\xi Gk),k\right\rangle_K.$$
Moreover, we can estimate
\begin{equation}\label{aa9}
\left|\< D^G\varphi\big(z(t)+\xi Gk(t)\big)-D^G\varphi(z(t)),\frac{k(t)}{t}\>_K \right|
%{|k(t)|_K}
\leq \left| D^G\varphi\big(z(t)+\xi Gk(t)\big)-D^G\varphi(z(t))\right|_K\cdot
\left|\frac{k(t)}{t}\right|_K
 \ \forall \xi\in[0,1].
\end{equation}
{Now we are going to take the limit for $t\to 0^+$ in \eqref{aa4}.} {To this purpose, we observe that,} as $D^G\varphi\in C_b(H,K)$  and
 $\big\{z(t)\big\}_{t\in(0,\varepsilon)}$ is compact in $H$, we have
 \begin{equation}\label{aa8}
\sup_{t\in(0,\varepsilon)}\left| D^G\varphi\big(z(t)+h\big)-D^Gf(z(t))\right|_K\\ \stackrel{|h|\to0^+}{\longrightarrow} 0.
\end{equation}
{By definition of $k(t)$ (see \eqref{ass1}), we have $|Gk(t)|_H\stackrel{t\to 0^+}{\longrightarrow} 0$. Hence, \eqref{aa8} provides
\begin{equation}\label{aa10}
\sup_{\xi\in[0,1]}\left| D^G\varphi\big(z(t)+\xi Gk(t)\big)-D^G\varphi(z(t))\right|_K
\stackrel{t\to 0^+}{\longrightarrow} 0.
\end{equation}
%\begin{equation}\label{aa5}
%\frac{\int_0^1\< D^Gf(x(t)+\xi Gk(t))-D^Gf(x(t)),k(t)\>_Kd\xi }{|k(t)|_K}
%\  \stackrel{t\to0^+}{\longrightarrow} \ 0.
%\end{equation}}
%$$
%\limsup_{t\to 0^+} \sup_{r\in(0,\varepsilon)}\frac{\left|f\left( x(r)+Gk(t)\right)- f(x(r))-\< D^Gf(x(r)), k(t)\>\right|}{|k(t)|}=0.
%$$
Hence, combining \eqref{ass3}, \eqref{aa9} and \eqref{aa10}, we get
\begin{equation}\label{aa5}
\int_0^1\< D^G\varphi\big(z(t)+\xi Gk(t)\big)-D^G\varphi(z(t)),\frac{k(t)}{t}\>_Kd\xi 
\stackrel{t\to 0^+}{\longrightarrow} 0.
\end{equation}}
% the first term of the right hand side of \eqref{aa4}
%converges to $0$ as $t \to 0^+$.}
%\begin{equation}\label{aa5}
%\frac{f\left( x(t)+Gk(t)\right)- f(x(t))-\<D^Gf(x(t),k(t)\>_K}{|k(t)|_K}\cdot \frac{|k(t)|}{t}\ \stackrel{t\to0^+}{\longrightarrow}\  0.
%\end{equation}
Moreover, \eqref{ass3} and
%Hence,  by Assumption \ref{ass:G}(iii) and
the continuity of the maps $t\mapsto z(t)$ and  $x\mapsto D^G\varphi(x)$ entails
\begin{equation}\label{aa6}
 \<D^G\varphi(z(t)),\frac{k(t)}{t}\>_K \ \stackrel{t\to0^+}{\longrightarrow} \ \left\langle D^G\varphi(z(0)), \overline{G^{-1}G}k\right\rangle_{{K}}.
\end{equation}
Combining \eqref{aa4}, \eqref{aa5},  \eqref{aa6}, and Lemma \ref{lemma:new}, the claim follows.
\end{proof}

\begin{Assumption}\label{ass:Gbis}
 $G\in\call(K,H)$.
\end{Assumption}
\begin{Proposition}\label{lemma2} Let Assumption \ref{ass:Gbis} hold and let $\varphi\in C^{1,G}_b(H)$ be  Lipschitz continuous on compact sets. Then $\varphi\in \mathcal{S}^{A,G}(H)$.
\end{Proposition}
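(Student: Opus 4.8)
The plan is to verify directly the defining condition of the class $\mathcal{S}^{A,G}(H)$ in \eqref{class:fun}: fixing $z\in C(\R^+;H)$, $k\in K$ and any $\varepsilon>0$ (all limits below being for $t\to0^+$, with $t$ restricted to $(0,\varepsilon]$), I must show
$$
\lim_{t\to0^+}\frac{\varphi\left(z(t)+\int_0^t\overline{e^{sA}G}k\,ds\right)-\varphi(z(t))}{t}=\left\langle D^G\varphi(z(0)),k\right\rangle_K.
$$
Since $G\in\call(K,H)$ by Assumption \ref{ass:Gbis}, we have $\overline{e^{sA}G}=e^{sA}G$, so that $h(t):=\int_0^t\overline{e^{sA}G}k\,ds=\left(\int_0^t e^{sA}\,ds\right)Gk$. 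By strong continuity of $\{e^{sA}\}_{s\geq0}$ we get $h(t)/t\to Gk$; hence, setting $r(t):=h(t)-tGk$, the remainder satisfies $r(t)\to0$ and $r(t)/t\to0$ in $H$, while $t\mapsto r(t)$ is continuous. The key idea is to split the increment into its $G$-direction part $tGk$ and the generic remainder $r(t)$:
$$
\frac{\varphi(z(t)+h(t))-\varphi(z(t))}{t}=\frac{\varphi(z(t)+tGk+r(t))-\varphi(z(t)+tGk)}{t}+\frac{\varphi(z(t)+tGk)-\varphi(z(t))}{t}.
$$

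For the second term I would argue exactly as in the mean value computation of Proposition \ref{lemma1}: using $G$-Gateaux differentiability \eqref{G-direc}, the map $\xi\mapsto\varphi(z(t)+\xi tGk)$ is $C^1$ on $[0,1]$ with derivative $t\langle D^G\varphi(z(t)+\xi tGk),k\rangle_K$, so the Fundamental Theorem of Calculus gives
$$
\frac{\varphi(z(t)+tGk)-\varphi(z(t))}{t}=\int_0^1\left\langle D^G\varphi(z(t)+\xi tGk),k\right\rangle_K\,d\xi.
$$
Since $\{z(t):t\in[0,\varepsilon]\}$ is compact, $z(t)+\xi tGk\to z(0)$ uniformly in $\xi\in[0,1]$, and because $D^G\varphi\in C_b(H,K)$ is continuous and bounded, dominated convergence shows this term tends to $\langle D^G\varphi(z(0)),k\rangle_K$, as desired.

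For the first term — which is precisely where the hypothesis that $\varphi$ is Lipschitz on compact sets is essential, since along the non-$G$-direction $r(t)$ the function $\varphi$ carries no differentiability information — I would first observe that the set $C:=\{z(t)+tGk+\theta r(t):t\in[0,\varepsilon],\,\theta\in[0,1]\}$ is compact, being the continuous image of the compact set $[0,\varepsilon]\times[0,1]$. Letting $L$ denote a Lipschitz constant of $\varphi$ on $C$, one estimates
$$
\frac{\big|\varphi(z(t)+tGk+r(t))-\varphi(z(t)+tGk)\big|}{t}\leq L\,\frac{|r(t)|_H}{t}\xrightarrow{\;t\to0^+\;}0,
$$
using $|r(t)|_H/t\to0$. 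Adding the two contributions yields the claim. The main obstacle is exactly this first term: because $r(t)$ is a generic direction of $H$ rather than a $G$-direction, only the combination of Lipschitz-continuity on compacts with the subquadratic decay $|r(t)|_H=o(t)$ allows one to discard it — this is the structural reason why mere $G$-differentiability does not suffice and the extra Lipschitz hypothesis is needed.
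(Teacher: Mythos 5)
Your proof is correct and follows essentially the same route as the paper's: the same decomposition of the increment into the $G$-direction part $tGk$ plus the remainder $\int_0^t e^{sA}Gk\,ds - tGk$, the same fundamental-theorem-of-calculus computation via \eqref{G-direc} for the $G$-direction term, and the same use of Lipschitz continuity on a compact set together with $\frac{1}{t}\int_0^t e^{sA}Gk\,ds \to Gk$ to kill the remainder term. The only cosmetic difference is that you take the compact set to contain the whole segments $\theta\mapsto z(t)+tGk+\theta r(t)$ while the paper uses the union of the two families of endpoints; both suffice.
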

\begin{proof}
Let $k\in K$. 
Observe that, as $G\in\call(K,H)$, we have $k\in K=\cald(G)$, $\overline{e^{sA}G}k= e^{sA}Gk$ for every $s>0$, and
\begin{equation}\label{bdd}
\lim_{t\to 0^+}\frac{1}{t}\int_0^t{e^{sA}G}k ds\to Gk.
\end{equation}
Let $t>0$. We can split
%\begin{eqnarray}\label{aa4tris}
%&\left|\frac{\varphi\left( z(t)+\int_0^t e^{sA}Gk ds\right)- \varphi(z(t))}{t}- \<D^G\varphi(z(0)),k\>_K\right|\\
%&\leq
%\left|\frac{\varphi\left( z(t)+\int_0^t{e^{sA}G}k ds\right)- f(x(t)+tGk)}{t}\right|\nonumber
% + \left|\frac{f(x(t)+tGk)-f(x(t))}{t}-\<D^Gf(x(0)),k\>_K\right|.
%\end{eqnarray}
\begin{align}\label{aa4tris}
&\frac{\varphi\left( z(t)+\dis\int_0^t e^{sA}Gk ds\right)- \varphi(z(t))}{t}\nonumber
\\&=
\frac{\varphi\left( z(t)+\dis\int_0^t{e^{sA}G}k \,ds\right)- \varphi\big(z(t)+tGk\big)}{t}
 + \frac{\varphi\big(z(t)+tGk\big)-\varphi(z(t))}{t}.
\end{align}
The set $\bigg\{ z(t)+\dis\int_0^t{e^{sA}G}k\, ds\bigg\}_{t\in(0,1)}\bigcup\,\bigg\{z(t)+tGk\bigg\}_{t\in(0,1)}\subset K$ is precompact. Hence, by Lipschitz continuity of $\varphi$ on compact sets,
%Now note that, by Lipschitz continuity of  $f$, we have
%$$
%\sup_{k\in \mathcal{K},t\in[0,1]} \left[\left|x(t)+\int_0^t{e^{sA}G}k ds\right|+\left|x(t)+tGk\right|\right]
%<\infty,$$
we have for some $C_0>0$ independent of $t\in(0,1)$
\begin{equation}\label{aabis}
\begin{split}
\left|\frac{\varphi\left( z(t)+\dis\int_0^t{e^{sA}G}k ds\right)- \varphi\big(z(t)+tGk\big)}{t}\right|\leq
  C_0 \left|\frac{1}{t}\int_0^t{e^{sA}G}k ds-Gk\right|.
 \end{split}
\end{equation}
We let now  $t\to 0^+$ in  \eqref{aa4tris}. Combining with \eqref{aabis} and \eqref{bdd} we get
\begin{equation}\label{final}
\lim_{t\to 0^+}\frac{\varphi\left( z(t)+\dis\int_0^t e^{sA}Gk ds\right)- \varphi(z(t))}{t}
= \lim_{t\to 0^+}
 \frac{\varphi\big(z(t)+tGk\big)-\varphi(z(t))}{t},
\end{equation}
provided that the limit in the right hand side above exists, as we are going to show.
We write
\begin{align*}
\varphi\big(z(t)+tGk\big)-\varphi(z(t))&={\int_0^1 \frac{d}{d\xi} \varphi\big(z(t)+\xi tG k\big)d\xi}\\
&{=\int_0^1 \lim_{{\eta}\to 0} \frac{\varphi\big(z(t)+(\xi+{\eta})tGk\big)-\varphi\big(z(t)+\xi tGk\big)}{{\eta}}d\xi}\\
&=\int_0^1\left\langle D^G\varphi\big(z(t)+\xi tGk\big),tk\right\rangle_K d\xi.
\end{align*}
By the equalities above and considering that   $D^G\varphi\in C_b(H;K)$, we have
$$\lim_{t\to 0^+}
 \frac{\varphi\big(z(t)+tGk\big)-\varphi(z(t))}{t}= \lim_{t\to 0^+} \int_0^1\left\langle D^G\varphi\big(z(t)+\xi tGk\big),k\right\rangle_K d\xi= \left\langle D^G\varphi(z(0)),
 k\right\rangle_K$$
 and the claim follows from \eqref{final}.

\end{proof}
\noindent

\end{document}